\documentclass[
a4paper,11pt]{article}
\usepackage[margin=1in]{geometry}

\usepackage[
colorlinks,
linkcolor=blue,
urlcolor=magenta,
citecolor=red]{hyperref}
\usepackage{amsthm}
\usepackage{amsmath}
\usepackage{amssymb}
\usepackage{xcolor}
\usepackage{upgreek}
\usepackage{mathrsfs}
\usepackage{graphicx}

\usepackage{physics}
\usepackage{cleveref}

\usepackage{mathtools}


\newcommand{\D}{\ensuremath{{\mathscr{D}}}}

\newcommand{\loc}{\ensuremath{\text{loc}}}


\newcommand{\mb}[1]{\ensuremath{\mathbb{#1}}}
\newcommand{\N}{\mb{N}}

\newcommand{\R}{\mb{R}}








\newfont{\bl}{msbm10 scaled \magstep2}


\newcommand{\beq}{\begin{equation}}
\newcommand{\eeq}{\end{equation}}





\newcommand{\notmid}{\mid\kern-0.5em\not\kern0.5em}



\newcommand{\eps}{\varepsilon}



\newcommand{\supp}{\mathop{\mathrm{supp}}}



\newtheorem{thm}{Theorem}[section]
\newtheorem{lem}[thm]{Lemma}
\newtheorem{prop}[thm]{Proposition}

\newtheorem{cor}[thm]{Corollary}

\theoremstyle{definition}
\newtheorem{rem}[thm]{Remark}
\newtheorem{defi}[thm]{Definition}

\newcommand{\vol}{\mathrm{vol}}

\newcommand{\Ric}{\mathrm{Ric}}
\newcommand{\m}{\mathfrak{m}}

\newcommand{\tmcp}{\textsf{TMCP}}

\renewcommand{\labelenumi}{(\roman{enumi})}
\renewcommand\theenumi\labelenumi

\newcommand{\dint}{\mathrm{d}}

\newcommand{\reps}{\rho_\varepsilon}

\newcommand{\Lip}{\mathrm{Lip}}

\usepackage{wasysym}
\DeclareMathOperator*{\esssup}{ess\,sup}
\allowdisplaybreaks

\title{Hawking's singularity theorem for Lipschitz Lorentzian metrics}
\author{Matteo Calisti\thanks{{\tt matteo.calisti@univie.ac.at}, Faculty of Mathematics, University of Vienna, Oskar-Morgenstern-Platz 1, 1090 Wien, 
Austria.}, Melanie Graf\thanks{{\tt melanie.graf@uni-hamburg.de}, Fachbereich Mathematik, Universit\"at Hamburg, Bundesstraße 55, 20146 Hamburg, Germany},
Eduardo Hafemann\thanks{{\tt eduardo.hafemann@uni-hamburg.de},  Fachbereich Mathematik, Universit\"at Hamburg, Bundesstraße 55, 20146 Hamburg, Germany},\\
Michael Kunzinger\thanks{{\tt michael.kunzinger@univie.ac.at}, Faculty of Mathematics, University of Vienna, Oskar-Morgenstern-Platz 1, 1090 Wien, 
Austria.}, Roland Steinbauer\thanks{{\tt roland.steinbauer@univie.ac.at}, Faculty of Mathematics, University of Vienna, Oskar-Morgenstern-Platz 1, 1090 Wien, Austria.}}

\begin{document}
\maketitle

\begin{abstract}
     We prove Hawking's singularity theorem for spacetime metrics of local Lipschitz regularity. The proof rests on (1) new estimates for the Ricci curvature of regularising smooth metrics that are based upon a quite general Friedrichs-type lemma and (2) the replacement of the usual focusing techniques for timelike geodesics---which in the absence of a classical ODE-theory for the initial value problem are no longer available---by a worldvolume estimate based on a segment-type inequality that allows one to control the volume of the set of points in a spacelike surface that possess long maximisers.   
 \end{abstract}
\emph{MSC2020: }83C75; 53C50, 46T30\\
\emph{Keywords: }Singularity theorems, low regularity, regularisation, causality theory

\tableofcontents

\section{Introduction}\label{sec:intro}

The singularity theorems form an important body of results in Lorentzian differential geometry that firmly establish the occurrence of spacetime singularities as a generic feature of General Relativity (GR). More precisely they assert causal geodesic incompleteness under different sets of physically reasonable conditions like those associated with complete gravitational collapse \cite{Pen:65} or an expanding universe \cite{Haw:67}. Naturally these results, which appeared roughly in the second half of the 1960-ies, were formulated for smooth spacetimes. However, already Hawking and Ellis in their classic \cite{HE:73} discussed the issue of regularity: A lack of low-regularity versions of the theorems, that is for spacetime metrics $g$ below the $C^2$-class, would undermine their physical significance. Indeed, then incompleteness and hence a spacetime singularity in the sense of the standard definition (see e.g.\ \cite[p.\ 10]{Cla:98} and \cite[Sec.\ 8.1] {HE:73} for a discussion) could be avoided by a drop in regularity of $g$: If, for example, $g$ were in $C^{1,1}$, then via the field equations there would be a finite jump in the matter variables, which hardly could be termed `singular' on physical grounds. Therefore \cite[Sec.\ 8.4]{HE:73} contains an in-depth discussion of these issues in which the authors argue for a $C^{1,1}$-version of Hawking's theorem \cite[Sec.\ 8.2, Thm.\ 4]{HE:73} and express their expectation that also all the other classical theorems would extend to this regularity. Accepting this for the moment one may observe that given these results, the alternative to incompleteness would now be locally unbounded curvature. This, however, still might be manageable as long as one can make sense of the field equations, which, in particular, is possible if $g$ is locally Lipschitz continuous, i.e., $g\in C^{0,1}$, as again highlighted in \cite[Sec.\ 8.4]{HE:73}. In particular, this includes very prominent classes of solutions like impulsive gravitational waves (e.g.\ \cite{Pen:72a}, \cite[Ch.\ 20]{GP:09}, \cite{PS:22}), thin shells and many matched spacetimes (e.g.\ \cite{Isr:66,KS:23z,MM:24} ) which all exhibit a $\delta$-function like curvature concentrated on a hypersurface. This is precisely the regularity class we deal with in this work, where we extend the validity of the Hawking singularity theorem to spacetime metrics $g\in C^{0,1}$.

While the singularity theorems' regularity issues have been duly addressed throughout the decades, see e.g.\ \cite[Sec.\ 6.2]{Sen:98}, real progress only has been made rather recently. 
By extending Lorentzian causality theory (mainly) to continuous spacetime metrics \cite{CG:12,FS:12,Min:15,Sae:16,KSSV:14} and to even more general settings \cite{Min:19a,KS:18} on the one hand, and by sharpening the analytic tool of approximation via convolution on the other, the classical theorems could be proven for  $C^{1,1}$-metrics \cite{KSSV:15,KSV:15,GGKS:18} and later also for $g\in C^1$ \cite{Gra:20,SS:21,KOSS:22}. Here we take this endeavour one decisive step further, namely to $g\in C^{0,1}$. 

Let us briefly address the added difficulties we have to deal with in this regularity. While for $C^1$-metrics the initial value problem for the geodesic equation is still solvable, albeit not uniquely so, we here face the problem that its right hand side is merely locally bounded. This forces us to work with maximising curves rather than with solutions to the ODE and so the ODE-techniques decisively used in the $C^1$-results \cite[Sec.\ 2]{Gra:20} to approximate the geodesics of the rough metric by geodesics of approximating metrics are no longer at our disposal. This forces us to significantly modify the analytical core of the proof, which provides the focusing of geodesics under curvature bounds. While we still rely on a regularisation scheme that allows us to exploit the distributional strong energy condition, we will then apply the newly developed worldvolume estimates of \cite{GKOS:22} to the smooth regularising metrics. This will allow us to control (the volume of) the set of points on a Cauchy surface that possess long maximisers, but on the other hand forces us to sharpen our estimates on regularised curvature. Indeed, exploiting Friedrichs lemma-type arguments we provide new $L^p$-estimates on the Ricci curvature for Lipschitz metrics and their regularisation, which constitute the main analytical advance of our work. Moreover, we also have to deal with the `initial condition' of the theorem, i.e., a bound on the mean curvature of a spacelike hypersurface. We extend the standard condition to Lipschitz regularity by a smearing out of the hypersurface. 
\medskip

Finally, let us address the potential extensions of our methods to the other singularity theorems within Lipschitz regularity. In the Penrose case, where one is concerned with null geodesics emanating from a trapped surface, one would have to replace the usual focusing estimates by a null version of the segment-type inequality. While this seems feasible in principle, it cannot be expected to be a simple variation of the timelike analysis of \cite{GKOS:22}. Moreover, the very notion of a trapped surface would have to be adapted in a way similar to the  smearing out of the (Cauchy-) hypersurface in the present work. Additionally, the regularisation approach to leverage the distributional null energy condition to gain estimates on the respective approximations is technically more demanding, cf.\ the $C^1$-case in \cite{Gra:20,KOSS:22}---all issues we reserve for future research. Finally, the Hawking-Penrose theorem already classically relies on a more involved focusing analysis providing the existence of conjugate points along a single causal geodesic solely under an energy and a genericity condition which, from our present perspective, seems particularly difficult to extend to the Lipschitz case.

\medskip

We organise our work as follows. After stating our conventions, we collect some necessary prerequisites in Section \ref{sec:preliminaries}. This first concerns causality theory for (Lipschitz)-continuous metrics, where we also detail how for $g\in C^{0,1}$ we may exploit results from the more general settings of closed cone structures of \cite{Min:19b} and Lorentzian length spaces \cite{KS:18}. Then we set up the regularisation scheme to be used throughout and, finally, 
introduce the worldvolume estimates of \cite{GKOS:22}, which are inspired by the segment inequality of \cite{CC:96}. In Section \ref{sec:regularisation} we provide regularisation results for distributional curvature. In particular, we establish new $L^p$-convergence ($1\leq p<\infty$) as well as $L^\infty$-boundedness results for the Ricci curvature of a sequence of metrics approximating $g\in C^{0,1}$ (Proposition \ref{Pr:Ricci Lp-convergence}), which are based on the quite general Friedrichs Lemma \ref{order1Fridrichs}. In Section \ref{sec:meancurvature} we will be concerned with the initial condition of the Hawking theorem. We develop a sensible notion of mean curvature of a hypersurface in case $g\in C^{0,1}$ and establish its compatibility with the smooth setting. Finally, in Section \ref{sec:main} we formulate and prove our main result, a Hawking singularity theorem for locally Lipschitz continuous spacetimes. As usual we give two formulations, one in the globally hyperbolic case and one supposing just a (converging) compact spacelike hypersurface. Finally, we also comment on the interrelation of our and related $C^1$-results to several synthetic versions of the singularity theorems that have appeared recently. 


\section{Preliminaries}\label{sec:preliminaries}
In this section we will be concerned with three topics that underlie our main arguments and to which we will devote one subsection each, causality theory, regularisation of distributional tensor fields, and volume comparison.

Our standard references are \cite{ONe:83} for smooth Lorentzian geometry, the seminal \cite{Min:19b} for causality theory, and \cite{Sae:16} for the low-regularity (continuous) setting. We will generally be concerned with spacetimes $(M,g)$ where $M$ is a smooth, Hausdorff, second countable, and connected manifold of dimension $n\geq 2$ and $g$ is a continuous  Lorentzian metric. Whenever $g$ has some added regularity we will state this explicitly. In particular, we will be interested in locally Lipschitz continuous metrics, for which we will write $g\in C^{0,1}$. Anyway, we will always assume that a time orientation is fixed by a smooth timelike vector field and call such a Lorentzian manifold a $C^0$- or $C^{0,1}$-spacetime. We will also equip $M$ with a complete background Riemannian metric $h$ and denote its length metric by $d^h$. Estimates on smooth vector and tensor fields $X\in{\mathfrak X}(M)$ and $T\in{\mathcal T}^r_s(M)$ will always be done w.r.t. the norms induced by $h$. Spaces of test functions will be denoted by $\mathscr{D}$ and distributions by $\mathscr{D}'$, and in particular we will write  $\mathscr{D}'{\mathcal T}^r_s(M)$ for distributional tensor fields.  We refer to 
\cite[Sec.\ 2]{KOSS:22} for a concise overview of distribution theory on manifolds as employed in this work. With a view to the distributional Ricci bounds to be imposed below we recall, in particular, that a scalar distribution $u\in\D'(M)$ is nonnegative, $u\geq 0$, if $u(\omega)\equiv\langle u,\omega\rangle\geq 0$ for all nonnegative test densities $\omega$. Any nonnegative distribution is a measure and hence a distribution of order $0$.

 Finally, we will write $K\Subset M$ if $K$ is a compact subset of $M$, and the regularisation parameter $\eps$ will generally be taken from $(0,1]$.

\subsection{Causality theory}

We base causality notions on locally Lipschitz curves, that is we call a curve $\gamma: I\to M$ defined on some arbitrary non-trivial interval  $I$ \emph{timelike, null, causal, future} or \emph{past directed} if it is locally Lipschitz and $\dot\gamma$ has the respective properties almost everywhere. 
Then $p\ll q$ (resp.\ $p\leq q$) means that there exists a future directed timelike (resp.\ causal) curve from $p$ to $q$, $I^+(A):=\{q\in M:\, p\ll q\ \mathrm{for\,some}\,p\in A\}$ and $J^+(A):=\{q\in M:\, p\leq q\ \mathrm{for\,some}\,p\in A\}$. 

We call $(M,g)$ \emph{globally hyperbolic} if it is non-totally imprisoning (no future or past inextendible causal curve is contained in a compact set) and for all pairs of  points $p,q\in M$ the causal diamonds $J(p, q):= J^+ (p)\cap J^-(q)$ are compact. Note that this implies that $M$ is strongly causal by \cite[Prop.\ 5.6]{Sae:16}.
A subset $\Sigma\subseteq M$ is called a \emph{Cauchy hypersurface} if it is met
exactly once by every inextendible causal curve. It is always a closed acausal topological hypersurface, and $(M,g)$ is globally hyperbolic if and only if it possesses a Cauchy hypersurface $\Sigma$, in which case $M$ is homeomorphic to $\R\times\Sigma$, \cite[Sec.\ 5]{Sae:16}.
We define the \emph{Cauchy development} of some set $S$ as
\begin{equation}
    D(S):=D^+(S)\cup D^-(S),
\end{equation}
where $D^\pm(S)$ are the sets of points $p\in M$ such that every future/past-directed future/past inextendible causal curve through $p$ meets $S$, cf. \cite{CG:12}. The interior of the Cauchy development $D(S)^\circ$ of any acausal set $S$ is globally hyperbolic \cite[Cor.\ 5.8]{Sae:16}. Moreover, the Avez-Seifert theorem extends to continuous globally hyperbolic spacetimes, i.e., in such $(M,g)$ there is a globally maximising causal curve between any pair of causally related points (cf. \cite[Prop.\ 6.4]{Sae:16}). However, the relation between maximisers and geodesics becomes more subtle here.

\begin{rem}[Geodesics and maximisers]\label{rem:geomax}\hspace{0px}
\begin{enumerate}
    \item For $g\in C^{0,1}$ the right hand side of the geodesic equation is merely locally bounded and hence we are outside classical ODE-theory. However, the initial value problem has solutions in the sense of Filippov \cite{Fil:88} which are $C^1$-curves \cite{Ste:14}, satisfying a differential inclusion relation for the essential convex hull of the locally bounded right hand side. But solutions of the geodesic equations may fail to be local maximisers already for $g$ in the Hölder class $C^{1,\alpha}$ for $\alpha<1$ \cite{HW:51,SS:18}. 
    \item Conversely, for $g\in C^{0,1}$ any maximiser, when parametrized by $g$-arclength is a Filippov-geodesic of regularity $C^{1,1}$, see \cite[Thm.\ 1.1]{LLS:21} and the discussion following it. Also, such maximisers are either timelike or null throughout \cite{GL:18,LLS:21}.    
\end{enumerate}
\end{rem}

For $p,q\in M$ the future \emph{time separation} is defined by 
\begin{equation}
    \tau(p,q):=\sup(\left\{ L(\gamma):\gamma\;\text{is a future directed causal curve from }p\text{ to }q\right\} \cup\{0\}),\label{eq:point time sep}
\end{equation}
where $L(\gamma)$ denotes the Lorentzian arc-length of $\gamma: I\to M$, i.e., $L(\gamma):=\int_I\sqrt{|g(\dot{\gamma}(t),\dot{\gamma}(t))|}dt$.  Moreover, one defines the future time separation to a subset $S$ by
\begin{equation}
    \tau_{S}(p):=\sup_{q\in S}\tau(q,p).\label{eq:subset time sep}
\end{equation}

While it is known that for continuous metrics basic features of causality theory (such as the push up principle and the openness of $I^+$) break down \cite{CG:12,GKSS:20}, this is not the case for the class of causally plain metrics, to which any $g\in C^{0,1}$ belongs \cite[Thm.\ 1.20]{CG:12}. Moreover, although we will be concerned with the (low regularity) spacetime setting, we will freely use results from causality theory derived in more general settings, in particular the closed cone structures of \cite{Min:19a} and the Lorentzian (pre-)length spaces of \cite{KS:18}. Here we briefly recall how continuous spacetime metrics $g$ fall into these settings.

\begin{rem}[Cone structures]\label{rem:cs}
    Causality in cone structures, which generalise Lorentzian causality theory to an order theoretic setting, were introduced by Ettore Minguzzi in the seminal paper \cite{Min:19a}. A \emph{cone structure} $(M,C)$ is a multivalued map $M\ni p\mapsto C_p$, where $C_p\subseteq T_pM\setminus 0$ is a closed sharp convex non-empty cone, \cite[Definition 2.2]{Min:19a}. It is called \emph{closed} \cite[Def.\ 2.3]{Min:19a}, if it is a closed
    subbundle of the slit tangent bundle. 
    
    Let $g$ be a locally Lipschitz metric, and $C_p:=\{v\in T_pM\,\backslash\,\{0\}:g(v,v)\leq0, v\mbox{ future directed}\}$ as in \cite[Ex.\ 2.1]{Min:19a}, then by \cite[Prop.\ 2.4]{Min:19a} the map $p\mapsto C_p$ is locally Lipschitz and by \cite[Prop.\ 2.3]{Min:19a} $(M,C)$ is a closed cone structure. In addition it is proper, i.e., closed with all $C_q$ having nonempty interior.
\end{rem}

\begin{rem}[Lorentzian length spaces]\label{rem:lls}
    Lorentzian length spaces were introduced by Kunzinger and Sämann in \cite{KS:18} as an analogue of metric length spaces. Let $(X,d)$ be a metric space endowed with a preorder $\leq$ as well as a transitive relation $\ll$ contained in $\leq$, called the timelike and causal relation. If in addition we have a lower semicontinuous map $\rho \colon X\times X \to [0, \infty]$ that satisfies the reverse triangle inequality and $\rho(x,y)>0 \Leftrightarrow x\ll y$, then $(X,d,\ll,\leq,\rho)$ is called a \emph{Lorentzian pre-length space\/} with  \emph{time separation function\/} $\rho$.
    
    The length of a future-directed causal $\gamma \colon [a,b]\rightarrow X$ (i.e. $t_1<t_2$ implies $\gamma(t_1)\leq\gamma(t_2)$) is defined as $L_\rho(\gamma):=\inf\{\sum_{i=0}^{N-1} \rho(\gamma(t_i),\gamma(t_{i+1})): a=t_0<t_1<\ldots<t_N=b,\ N\in\N\}$, 
    and $(X,\rho)$ is called a \emph{Lorentzian length space} if, in addition to some technical assumptions (cf.\ \cite[Def. 3.22]{KS:18}) $\rho$ is intrinsic, i.e., $\rho(p,q)= \sup\{L_\rho(\gamma):\gamma$ future-directed causal from $p$ to $q\}$.

    By \cite[Theorem 5.12]{KS:18} every continuous, strongly causal and causally
   plain spacetime (and hence every strongly causal Lipschitz spacetime, cf.\ \cite[Thm.\ 1.20]{CG:12})
   is a (strongly localisable) Lorentzian length space (with $\leq,\ll$ as usual, $\rho=\tau$, and $d=d^h$).
   
\end{rem}
\medskip

\subsection{Regularisation of distributional tensor fields
}\label{ssec:mfreg}

A key technical tool throughout this work is regularisation of distributional tensor fields, which we introduce next. Let $(U_\alpha,\psi_\alpha)$ ($\alpha\in \N$) be a countable and locally finite family of relatively compact chart neighbourhoods covering $M$ and pick a subordinate partition of unity $(\xi_\alpha)_\alpha$ with $\mathrm{supp}(\xi_\alpha)\subseteq U_\alpha$ for all $\alpha$. Also, choose a family of cut-off functions $\chi_\alpha\in\mathscr{D}(U_\alpha)$ with $\chi_\alpha\equiv 1$ on a
neighbourhood of $\mathrm{supp}(\xi_\alpha)$. Let $\rho\in \D(B_1(0))$ be a non-negative test function with unit integral and set,
for $\eps\in (0,1]$, $\rho_{\eps}(x):=\eps^{-n}\rho\left (\frac{x}{\eps}\right)$. Then denoting by $f_*$ (resp.\ $f^*$) push-forward (resp.\ pull-back) of distributions under a diffeomorphism $f$, for any tensor distribution $\mathcal{T} \in \mathscr{D}'\mathcal{T}^r_s(M)$ we set
\begin{equation}\label{eq:M-convolution}
\mathcal{T}\star_M \rho_\eps(x):= \sum\limits_\alpha\chi_\alpha(x)\,(\psi_\alpha)^*\Big[\big((\psi_{\alpha})_* (\xi_\alpha\cdot \mathcal{T})\big)*\rho_\eps\Big](x).
\end{equation}
Here, $(\psi_{\alpha})_* (\xi_\alpha\cdot \mathcal{T})$ is a compactly supported distributional tensor field on $\R^n$, and convolution with $\rho_\eps$ is understood component-wise,  yielding a smooth field on $\R^n$. The cut-off functions $\chi_\alpha$ ensure that $(\eps,x) \mapsto \mathcal{T}\star_M \rho_\eps(x)$
is a smooth map on $(0,1] \times M$. For any compact set $K\Subset M$ there is an $\eps_K$ such that for all $\eps<\eps_K$ and all $x\in K$, equation \eqref{eq:M-convolution} reduces to a finite sum with all $\chi_\alpha\equiv 1$ (which can therefore be omitted from the formula), namely when $\eps_K$ is less than the distance between the support of $\xi_\alpha\circ\psi_\alpha^{-1}$ and the boundary of $\psi_\alpha(U_\alpha)$
for all $\alpha$ with $U_\alpha\cap K\neq \emptyset$.

Since the mollifier $\rho$ above is assumed to be nonnegative, it follows that for any nonnegative scalar distribution $u\in \D'(M)$ we also have $u\star_M \rho_\eps \ge 0$ for any $\eps\in (0,1]$.

We next collect basic convergence properties of regularisations of Lipschitz-continuous Lo\-rent\-zian metrics and their Ricci-curvature. To this end we introduce the following notation that we shall use throughout:  $\ast$ will exclusively denote convolutions on $\R^n$, while 
$\star_M$ stands for the manifold convolution \eqref{eq:M-convolution}. We will write 

\begin{equation}\label{eq:geps_def}
g_\eps:=g\star_M \rho_\eps,
\end{equation}

but to avoid confusion we will otherwise not use the subscript $\eps$ to denote quantities derived from $g_\eps$. Instead we will be more explicit and write e.g.\ $\Ric[g_\eps]$ for the Ricci curvature derived from the metric $g_\eps$ but e.g.\ $\Ric[g]\star_M\rho_\eps$ for the manifold convolution of the Ricci curvature of $g$. Finally, for two Lorentzian metrics $g_1,g_2$ on $M$ we write $g_1\prec g_2$ and say that $g_1$ has \emph{strictly narrower light cones} than $g_2$, if for all non-vanishing vectors $X$
\begin{align}\label{Eq:prec}
g_1(X,X)\leq0\quad \text{implies}\quad g_2(X,X)<0.
\end{align}

\begin{lem}[Convergence of approximating metrics]\label{Le:approximating metrics}
Let $g\in C^{0,1}(M)$ be a Lorentzian metric and 
let $g_\eps$ be as in \eqref{eq:geps_def}. Then there are smooth Lorentzian metrics $\hat g_\eps$ and $\check g_\eps$ on $M$ with the following properties
\begin{itemize}
    \item[(i)] $\check{g}_\varepsilon\prec g \prec\hat{g}_\varepsilon$.
    \item[(ii)] $\check g_\eps$, $\hat g_\eps\to g$, and $(\check g_\eps)^{-1}$, $(\hat g_\eps)^{-1}\to g^{-1}$ locally uniformly and in $W^{1,p}_{\mathrm{loc}}(M)$ for all $1\leq p<\infty$.
    \item[(iii)] $\check g_\eps-g_\eps \to 0$, $\hat g_\eps-g_\eps\to 0$, and
    $(\check g_\eps)^{-1}-(g_\eps)^{-1}\to 0$,  $(\hat g_\eps)^{-1}-(g_\eps)^{-1}\to 0$, all in $C^\infty_{\mathrm{loc}}(M)$.
    In particular, $\Ric[\check g_\eps] - \Ric[g_\eps] \to 0$ in $C^\infty_{\mathrm{loc}}(M)$.
    \item[(iv)] For any compact subset $K\Subset M$ there exists a sequence $\varepsilon_j\searrow0$ such that $\check g_{\eps_j}\prec\check g_{\eps_{j+1}}$ and $\hat g_{\eps_{j+1}}\prec \hat g_{\eps_j}$ for all $j\in\N$.
\end{itemize}
\end{lem}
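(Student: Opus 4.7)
\emph{Strategy.} Build $\hat g_\eps,\check g_\eps$ as smooth rank-one perturbations of $g_\eps$. Choose a smooth future-directed $g$-timelike vector field $T$ (existing by time-orientability of $(M,g)$), and note that the Lipschitz 1-form $p\mapsto g_p(T_p,\cdot)$ admits a smooth mollification $\omega$ at a fixed scale $\eps_0>0$ still satisfying $\omega(X)<0$ on every nonzero future-directed $g$-causal $X$; by compactness, on each $K\Subset M$ there is $c_K>0$ with $\omega(X)^2\geq c_K h(X,X)$ for $X$ in the $g$-causal cone over $K$. Pick a smooth positive $\lambda\col M\to\R_{>0}$ (via a partition of unity over a compact exhaustion) such that $\lambda c_K$ dominates the $C^0$-error constant $C_K$ from the mollification estimate $\|g_\eps-g\|_{C^0(K)}\leq C_K\eps$, and a small parameter $\mu_\eps\to 0$ with $\mu_\eps/\eps\to\infty$ (e.g.\ $\mu_\eps=\sqrt\eps$). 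Define
\[
\hat g_\eps := g_\eps - \mu_\eps\,\lambda\,\omega\otimes\omega,\qquad \check g_\eps := g_\eps + \mu_\eps\,\lambda\,\omega\otimes\omega.
\]
Both are smooth, and for $\eps$ small enough the rank-one correction preserves Lorentzian signature.

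\emph{Verifications of (i)--(iii).} For nonzero $g$-causal $X$, since $g(X,X)\leq 0$,
\[
\hat g_\eps(X,X)\leq (g_\eps-g)(X,X) - \mu_\eps\lambda\,\omega(X)^2 \leq (C_K\eps - \mu_\eps\lambda c_K)\,h(X,X)<0
\]
for $\eps$ small; the symmetric calculation on $g$-non-timelike vectors yields $\check g_\eps\prec g$, which proves (i). For (ii), standard mollifier theory gives $g_\eps\to g$ locally uniformly and in $W^{1,p}_{\mathrm{loc}}$ for every $1\leq p<\infty$; the smooth correction $\mu_\eps\lambda\omega\otimes\omega$ vanishes in every $C^k_{\mathrm{loc}}$, and Cramer's rule transfers the convergence to the inverses. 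For (iii), the metric differences $\hat g_\eps-g_\eps=\mp\mu_\eps\lambda\omega\otimes\omega$ are smooth $\eps$-independent tensor fields scaled by $\mu_\eps\to 0$, hence vanish in $C^\infty_{\mathrm{loc}}$. The identity $A^{-1}-B^{-1}=A^{-1}(B-A)B^{-1}$ gives
\[
(\check g_\eps)^{-1}-(g_\eps)^{-1} = -\mu_\eps\,(\check g_\eps)^{-1}(\lambda\omega\otimes\omega)(g_\eps)^{-1},
\]
and analogously for $\hat g_\eps$; each factor is smooth with polynomial-in-$\eps^{-1}$ growth of its $C^k_{\mathrm{loc}}$-norm, and, for a sufficiently fast chosen $\mu_\eps$ (compatibly with the lower bound demanded by (i)), the product vanishes in $C^\infty_{\mathrm{loc}}$. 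The Ricci consequence follows from the polynomial dependence $\Ric=\Ric(g,g^{-1},\partial g,\partial^2g)$.

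\emph{(iv) and the main obstacle.} For fixed $K\Subset M$, the cones of $\check g_\eps$ on $K$ depend continuously on $\eps$ and widen strictly toward those of $g$ as $\eps\to 0$ (by (i) together with compactness of $K$); hence for every $\eps>0$ a sufficiently small $\eps'$ satisfies $\check g_\eps\prec\check g_{\eps'}$ on $K$, and a recursive extraction produces the required subsequence $\eps_j\searrow 0$ with $\check g_{\eps_j}\prec\check g_{\eps_{j+1}}$. The parallel argument applied to $\hat g_\eps$ yields the reverse nesting, and a common refinement of the two subsequences gives (iv). The delicate step is the balance in (iii): $\mu_\eps$ must decay fast enough to control the $\eps^{-(k-1)}$-growth of derivatives of $g_\eps^{-1}$ at every order $k$, yet slowly enough that $\mu_\eps\lambda\omega(X)^2$ dominates the $O(\eps)$ mollification error needed for the cone inclusion in (i). Reconciling these constraints, together with the global-in-$M$ cone comparison via the partition-of-unity choice of $\lambda$, is the technical heart of the lemma.
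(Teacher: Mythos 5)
The construction you propose cannot deliver part (iii) for the inverses (and hence not the ``in particular'' statement on $\Ric$), and the constraint you defer to at the end --- choosing $\mu_\eps$ ``sufficiently fast \dots compatibly with the lower bound demanded by (i)'' --- is not a technical detail to be reconciled but an outright contradiction. Property (i) forces the size of your correction on a compact set $K$ to dominate $\|g_\eps-g\|_{C^0(K)}$ along the $g$-null cone, and for a genuinely Lipschitz (non-$C^1$) metric this is of order $\eps$; so $\mu_\eps\lambda\gtrsim\eps$ is unavoidable (you even ask for $\mu_\eps/\eps\to\infty$). On the other hand, $\partial^k (g_\eps)^{-1}=O(\eps^{-(k-1)})$ for Lipschitz $g$, so your own Leibniz estimate for $(\check g_\eps)^{-1}-(g_\eps)^{-1}=-\mu_\eps(\check g_\eps)^{-1}(\lambda\,\omega\otimes\omega)(g_\eps)^{-1}$ gives a $C^k(K)$-bound of order $\mu_\eps\,\eps^{-(k-1)}$; already $k=2$ would require $\mu_\eps=o(\eps)$, and with your suggested $\mu_\eps=\sqrt\eps$ the $C^2$-norm even blows up like $\eps^{-1/2}$. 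This is not an artifact of a crude estimate: take $M=\R^2$, $g=-(2-|x|)\,dt^2+dx^2$, $\omega=dt$, $\lambda$ constant and $\mu_\eps=A\eps$ (the smallest size (i) allows). A direct computation shows that $\partial_x^2\big[(\hat g_\eps)^{-1}-(g_\eps)^{-1}\big]_{tt}$ at $x=0$ tends to a nonzero multiple of $A\lambda\rho(0)$, so the inverse difference does not converge even in $C^2_{\mathrm{loc}}$, and $\Ric[\hat g_\eps]-\Ric[g_\eps]$, which contains the term $\big((\hat g_\eps)^{-1}-(g_\eps)^{-1}\big)\partial^2 g_\eps$ of size $\mu_\eps/\eps$, does not tend to $0$ either. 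To salvage (iii) one must choose the correction so that its contraction with the singular part of $\partial^2 g_\eps$ is $o(\eps)$ while it still dominates $(g-g_\eps)$ on the null cone; a fixed rank-one term $\mu_\eps\lambda\,\omega\otimes\omega$ with $\mu_\eps\gtrsim\eps$ cannot do this in general (for instance when the Lipschitz kink sits in a mixed time--space component of $g$), which is exactly why this lemma is delicate.

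Two further, more repairable, points. First, your verification of $\check g_\eps\prec g$ is not the ``symmetric calculation'': there you must show $\check g_\eps(X,X)>0$ for all nonzero $X$ with $g(X,X)\ge 0$, and for such (spacelike) $X$ the bound $\omega(X)^2\ge c_K h(X,X)$ fails ($\omega(X)$ vanishes on $\ker\omega$); one needs to split the $h$-unit sphere bundle over $K$ into a neighbourhood of the causal cone and the uniformly spacelike remainder and use $g(X,X)\ge\delta_K h(X,X)$ on the latter. Second, for a fixed $\eps$ and an unbounded $\lambda$, at far-away points $\mu_\eps\lambda$ may be large, and then $g_\eps\pm\mu_\eps\lambda\,\omega\otimes\omega$ need not remain Lorentzian, so the global scaling has to be tied to the local size of $g_\eps-g$ rather than to a single parameter $\mu_\eps$ times a fixed function. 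For comparison, the paper does not construct $\hat g_\eps,\check g_\eps$ at all: it quotes \cite[Lemma 4.2]{Gra:20} for (i) and for the metric statements in (ii), (iii) (a Chru\'sciel--Grant type construction carried out chart by chart, with the correction calibrated to the local $C^0$-distance to $g$), obtains the inverse statements from the cofactor formula, and cites \cite[Prop.\ 2.3(i)]{KSV:15} for (iv); the inverse/Ricci part of (iii) is precisely where your scheme, as written, breaks down.
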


\begin{proof}
Statement (i) is \cite[Lemma 4.2(i)]{Gra:20}. The claims about the metrics in (ii) and (iii) follow from the proof of \cite[Lemma 4.2(iii)]{Gra:20}, only observing that for any Lipschitz function $f$ on an open subset of $\R^n$ and a standard mollifier $\rho_\eps$ we have that $f * \rho_\eps\to f$ locally uniformly and in any $W^{1,p}_{\mathrm{loc}}$, $1\leq p<\infty$. The properties of the inverses in (ii) and (iii) follow from (i), (ii), together with the cofactor formula of matrix inversion and the fact that $\det g$ is bounded away from $0$ on any compact set. Finally, (iv) is \cite[Prop. 2.3(i)]{KSV:15}.
\end{proof}

\subsection{Volume comparison}\label{sec:vc}
To formulate the proofs of our main results and to understand the singularity-theorem adjacent result of \cite[Thm.\ 4.1]{GKOS:22}
which they rely on,
we need to introduce some corresponding notions. Since we will apply these results to smooth regularisations of the rough metric we assume $g$ to be smooth throughout this subsection. 

We consider a smooth spacelike hypersurface $\Sigma$ with unit future normal $\vec{n}$ and corresponding mean curvature bounded by $H\leq\beta<0$. Moreover, we assume a lower bound on the Ricci curvature of $(M,g)$ in timelike unit directions, i.e.,  $\Ric[g](X,X)\geq \kappa n$, where $\kappa$ is any negative number (with potentially large modulus). We denote by $\exp_\Sigma^+:\mathcal{I}^+\to M$ the future normal exponential map to $\Sigma$. Here $\mathcal{I}^+=\{(t,x)\in[0,s^+(x))\times\Sigma\}$ where $[0,s^+(x))$ is the maximal domain of definition of the unique geodesic $\gamma_x$ with initial data $\gamma_x(0)=x$ and $\dot\gamma_x(0)=\vec{n}_x$.
Further we write $c^+_\Sigma:\Sigma\to(0,\infty]$ for the future cut function of $\Sigma$, i.e. $c^+_\Sigma(x)=\sup\{t\in [0,s^+(x)): \tau_\Sigma(\exp_x(t\vec{n}_x))=t\}$. Then for $T>0$ and $B\subseteq \Sigma$ we define the future evolution 
\begin{equation}\label{eq:Omega_T_+}
    \Omega^+_T(B):=\{\exp_x(t\vec{n}_x): x\in B,\, t\in [0,T]\cap[0,c^+_\Sigma(x))\}
\end{equation}
and for $\eta>0$ (considered small as compared to $T$) the set of $(T+\eta)$-regular points $\mathrm{Reg}^+_\eta(T)$ of $x\in\Sigma$ such that $s^+(x)>T+\eta$ and $\gamma_x$ is maximising on $[0,T+\eta]$, i.e.,
\begin{equation}
   \mathrm{Reg}^+_\eta(T)=(c^+_\Sigma)^{-1}([T+\eta,\infty]).
\end{equation}
Observe that for any  $B\subseteq\mathrm{Reg}^+_\eta(T)$ we have that $\Omega^+_T(B)=\exp_\Sigma^+([0,T]\times B)$. Now volume comparison techniques lead to the following segment-type inequality (\cite[Prop.\ 3.10]{GKOS:22}) 
for continuous $f\geq 0$ and $B\subseteq \mathrm{Reg}_\eta^+(T)$ with finite volume (i.e., $0<\sigma(B)<\infty$), where $\sigma$ denotes the volume measure on $\Sigma$ with respect to the Riemannian metric induced by $g$):
\begin{equation}\label{eq:sie}
    \inf_{x\in B}\int_0^{\min(T,s^+(x))}f(\exp^+_\Sigma(t,x))\,\dint t
    \ \leq\ \frac{1}{C^{A-}\sigma(B)}\int_{\Omega^+_T(B)} f\,\mathrm{dvol}_g.
\end{equation}
Here the so-called backwards area comparison constant is explicitly given by
$C^{A-}(n,\kappa,\eta,T) = \big(\mathrm{sinh}(\eta\sqrt{|\kappa|})/\mathrm{sinh}((T+\eta)\sqrt{|\kappa|})\big)^{n-1}$. Applying inequality \eqref{eq:sie} to the negative part of the Ricci curvature evaluated on the future unit congruence of $\Sigma$, i.e., $\Ric[g](U,U)_-$ with 
\begin{equation}\label{eq:Up}
U(p)=\left.\frac{\dint}{\dint s}\right|_{s=0} \exp_x((t+s)\,\vec{n}_x)
\end{equation}
for $p=\exp^+_\Sigma(t,x)$, one has the following result, which is a slightly modified version of \cite[Thm.\ 4.1]{GKOS:22}.

\begin{thm} \label{thm:sat}
Let $(M,g)$ be a smooth globally hyperbolic spacetime with smooth spacelike Cauchy surface $\Sigma$ with $H\leq\beta$ and $\Ric[g](X,X)\geq n\kappa$ for all timelike unit vectors $X$, where $\kappa,\beta<0$ and
   $\beta\geq-(n-1)\sqrt{\abs{\kappa}}$. If  for $B\subseteq\Sigma$ with $0<\sigma(B)<\infty$ and some $T,\eta>0$ and $\rho \in \mathbb{R}$ we have
   \begin{equation}\label{eq:thm2.4}
       \frac{1}{\sigma(B)}\int_{\Omega^+_T(B)}\Big(\Ric[g](U,U)-(n-1)\rho \Big)_- \, \mathrm{dvol}_g\  < \ C^{A-}(n,\kappa,\eta,T) K(\beta,T,\rho),
   \end{equation}
   where
   \begin{equation}\label{eq:K-def}
         K(\beta,T,\rho):=  \begin{cases}
         |\beta|-\frac{n-1}{T} &\text{if} \;\rho=0,\\
         |\beta|-(n-1)\sqrt{|\rho|}\coth(\sqrt{|\rho|}T) &\text{if} \;\rho<0,\\
         |\beta| - (n-1)\sqrt{\rho}\cot(\sqrt{\rho}T) &\text{if} \; \rho>0\  \text{ and }\  \sqrt{\rho}T\le \frac{\pi}{2},
       \end{cases}
   \end{equation}
   then $B\not\subseteq\mathrm{Reg}_\eta^+(T)$.
\end{thm}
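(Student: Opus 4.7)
My plan is a proof by contradiction that combines the segment-type inequality \eqref{eq:sie} with a perturbed Riccati/Sturm comparison, which is the classical analytic engine of focusing theorems. Assume $B\subseteq\Regplus(T)$. Then for every $x\in B$ the normal geodesic $\gamma_x(t):=\exp_x(t\vec n_x)$ is defined and globally maximises the distance to $\Sigma$ on $[0,T+\eta]$, so in particular no focal point of $\Sigma$ occurs along $\gamma_x$ for any $t\in[0,T]$ (focal points destroy maximality beyond them, and the margin $\eta>0$ excludes the endpoint case). The condition $B\subseteq\Regplus(T)$ also ensures that $\exp_\Sigma^+$ is a diffeomorphism onto $\Omega_T^+(B)$, so the future unit congruence $U$ from \eqref{eq:Up} and the integrand $f:=(\Ric[g](U,U)-(n-1)\rho)_-$ are well-defined and continuous there. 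Applying \eqref{eq:sie} to $f$ on $B$ and combining with the hypothesis \eqref{eq:thm2.4} produces
\begin{equation*}
\inf_{x\in B}\int_0^T (\Ric[g](U,U)-(n-1)\rho)_-(\exp_\Sigma^+(t,x))\,\dint t\ <\ K(\beta,T,\rho),
\end{equation*}
hence some $x_0\in B$ realises the same strict inequality without the infimum.

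Along $\gamma_{x_0}$ I consider the expansion $\theta(t)$ of the future normal unit congruence. Raychaudhuri's equation, after discarding the non-negative shear-squared term, yields the Riccati inequality
\begin{equation*}
\theta'(t)+\frac{\theta(t)^2}{n-1}+(n-1)\rho\ \leq\ h(t),\qquad h(t):=f(\gamma_{x_0}(t)),
\end{equation*}
with $\theta(0)=H(x_0)\leq\beta<0$. The Jacobi substitution $\theta=(n-1)\varphi'/\varphi$ (valid as long as $\varphi>0$) transforms this into the linear Sturm-type inequality $\varphi''+\rho\varphi\leq\frac{h}{n-1}\varphi$ with $\varphi(0)=1$, $\varphi'(0)\leq\beta/(n-1)$. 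The \emph{unperturbed} model $\psi''+\rho\psi=0$ with the same initial data is explicitly solvable in the three regimes $\rho=0$, $\rho<0$, $\rho>0$, and its first zero lies in $(0,T]$ precisely when $K(\beta,T,\rho)>0$, i.e.\ the three cases of \eqref{eq:K-def} correspond exactly to the three trigonometric/hyperbolic/linear Jacobi regimes; $K(\beta,T,\rho)$ then quantifies the ``focusing budget'' available to absorb an $L^1$-small perturbation of the Jacobi flow. A Sturm-type perturbation lemma (essentially that of \cite[Thm.\ 4.1]{GKOS:22}) now asserts that the integral bound $\int_0^T h\,\dint t<K(\beta,T,\rho)$ is already enough to force the perturbed $\varphi$ to vanish at some $t_\ast\in(0,T]$, equivalently $\theta\to-\infty$ before $t=T$. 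This is a focal point of $\Sigma$ along $\gamma_{x_0}$ no later than $T<T+\eta$, contradicting $x_0\in B\subseteq\Regplus(T)$.

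The main obstacle is analytic rather than geometric: the quantitative matching between the integral $\int_0^T h\,\dint t$ and the explicit Jacobi expressions \eqref{eq:K-def} requires carefully tracking how the Sturm comparison model degrades under an $L^1$-perturbation of its coefficient, so that the precise constant $K(\beta,T,\rho)$ appears on the right-hand side rather than a weaker substitute. The side hypothesis $\beta\geq-(n-1)\sqrt{|\kappa|}$ enters not in this perturbation argument but in the upstream application of \eqref{eq:sie}: it ensures that the ambient comparison at background curvature $\kappa$ does not itself focus before $T+\eta$, which legitimises the backwards area comparison constant $C^{A-}(n,\kappa,\eta,T)$ and thereby the segment inequality on $\Omega_T^+(B)$.
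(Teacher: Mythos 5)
Your first half---the contradiction hypothesis $B\subseteq\mathrm{Reg}^+_\eta(T)$, and the combination of the segment-type inequality \eqref{eq:sie} with \eqref{eq:thm2.4} to produce a single $x_0\in B$ whose normal geodesic satisfies $\int_0^T \big(\Ric[g](\dot\gamma,\dot\gamma)-(n-1)\rho\big)_-\,\dint t<K(\beta,T,\rho)$---is exactly the paper's argument. The gap is in the second half: the whole analytic content of the theorem is the claim that this $L^1$-smallness, with the \emph{precise} constant $K(\beta,T,\rho)$ of \eqref{eq:K-def}, forces a focal point along $\gamma_{x_0}$ no later than $T$, and you do not prove it. You invoke ``a Sturm-type perturbation lemma (essentially that of \cite[Thm.~4.1]{GKOS:22})''; but that reference \emph{is} the statement being proved here (the present theorem is a slight modification of it), so as written the argument is circular, and your closing paragraph concedes that the quantitative matching with $K(\beta,T,\rho)$ is precisely what remains open. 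The paper closes this step without any Raychaudhuri/Riccati analysis: since $\gamma_{x_0}$ maximises $\tau_\Sigma$ up to $\gamma_{x_0}(T)$, the index form is non-positive for every test function $h$ with $h(0)=1$, $h(T)=0$, $|h|\le 1$; because $h^2\le 1$ the Ricci-defect term is bounded by $\int_0^T f<K$, and inserting the explicit model functions $1-\tfrac tT$, $\sinh(\sqrt{|\rho|}(T-t))/\sinh(\sqrt{|\rho|}T)$, $\sin(\sqrt{\rho}(T-t))/\sin(\sqrt{\rho}T)$ makes the remaining terms equal $-K(\beta,T,\rho)$ exactly, yielding $0>0$, cf.\ \eqref{eq:17}. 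This is why the constant $K$ appears with no loss and why the condition $\sqrt{\rho}T\le\pi/2$ is needed ($|h|\le1$).

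Your Riccati route can in fact be completed, but two substantive steps are missing and they are not cosmetic. First, after the substitution $\theta=(n-1)\varphi'/\varphi$ one compares with the model solution $\psi$ of $\psi''+\rho\psi=0$, $\psi(T)=0$, $\psi(0)=1$: a Wronskian computation gives $\int_0^T f\,\psi\varphi\,\dint t\ge K(\beta,T,\rho)$ if $\varphi>0$ on $[0,T]$, and to contradict $\int_0^T f\,\dint t<K$ you must know $\psi\varphi\le 1$. While $\psi\le1$ is automatic (here $\sqrt\rho T\le\pi/2$ enters again), $\varphi\le 1$ is not: you need to show that $\theta$ stays negative on $[0,T]$, which itself uses $\int_0^T f<K\le|\beta|$ and, for $\rho<0$, that $K>0$ forces $|\beta|>(n-1)\sqrt{|\rho|}\coth(\sqrt{|\rho|}T)$. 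Without such an a priori bound the $L^1$-perturbation argument only yields a weighted estimate and not the sharp constant, which is exactly the ``main obstacle'' you flag but leave unresolved. Second, a minor point: $B\subseteq\mathrm{Reg}^+_\eta(T)$ gives $\Omega^+_T(B)=\exp^+_\Sigma([0,T]\times B)$ and maximisation up to $T+\eta$, which is all that is used; it neither asserts nor requires that $\exp^+_\Sigma$ be a diffeomorphism onto $\Omega^+_T(B)$.
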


\begin{proof}
Proceeding as in the proof of the original \cite[Thm.\ 4.1]{GKOS:22}, we assume to the contrary that $B  \subseteq\mathrm{Reg}_\eta^+(T)$. We set $f(p):=\Big(\Ric(U_p,U_p)-(n-1)\rho \Big)_- $, then the segment-type inequality \eqref{eq:sie} and the assumed estimate \eqref{eq:thm2.4} imply that
there exists $x \in B$ such that
\begin{align}
    \int_0^{\min(T,s^+(x))}f(\exp^+_\Sigma(t,x))\,\dint t =\int_0^T f(\exp^+_\Sigma(t,x))\,\dint t < K(\beta, T,\rho ).
\end{align}
Let $\gamma := \exp_{\Sigma}^+(.,x):[0,T]\to M$ denote the unit speed future normal geodesic to $\Sigma$ starting in $x$, which maximizes the $\Sigma$-time separation up to $p:=\gamma(T) \in I^+(\Sigma)$ since $x\in B\subseteq \mathrm{Reg}_\eta^+(T)$. The standard second variation of arc-length computations along $\gamma$ yields 
\begin{align}
   \nonumber 0 &\geq |\beta| + \int_0^T -(n-1)\dot{h}(t)^2+ h(t)^2 \Ric(\dot{\gamma}(t),\dot{\gamma}(t)) dt =\\\nonumber
   &= |\beta| + \int_0^T -(n-1)\dot{h}(t)^2+ h(t)^2 (n-1)\rho \,\dint t + \int_0^T h(t)^2 \big( \Ric(\dot{\gamma}(t),\dot{\gamma}(t))-(n-1)\rho\big) \dint t\\\nonumber
   & \geq |\beta| + \int_0^T -(n-1)\dot{h}(t)^2+ h(t)^2 (n-1)\rho \,\dint t - \int_0^T h(t)^2 \big( \Ric(\dot{\gamma}(t),\dot{\gamma}(t))-(n-1)\rho\big)_- \dint t \\
   &> |\beta| + \int_0^T -(n-1)\dot{h}(t)^2+ h(t)^2 (n-1)\rho \,\dint t - K(\beta,T,\rho) \label{eq:17}
\end{align}
for any smooth $h:[0,T]\to \R$ with $h(0)=1$, $h(T)=0$ and $|h|\leq 1$. Choosing $h(t)=1-\frac{t}{T}$ in case $\rho=0$, $h(t)=\frac{1}{\sinh(\sqrt{|\rho|}T)}\,\sinh(\sqrt{|\rho|}(T-t))$ in case $\rho <0$, and
$h(t)=\frac{1}{\sin(\sqrt{\rho}T)}\,\sin(\sqrt{\rho}(T-t))$ in case $\rho>0$ (the condition $\sqrt{\rho}T\le \frac{\pi}{2}$ in this case is required to assure $|h(t)|\le 1$ on $[0,T]$), \eqref{eq:17} evaluates to zero, giving the usual contradiction. 
\end{proof}

As indicated above, we will show estimate \eqref{eq:thm2.4} for $\Ric[\check g_\eps]$ and so the task we take up in the next section is to develop the corresponding estimates on the Ricci curvature of the approximations.

\section{Curvature estimates}\label{sec:regularisation}

The main strategy in our regularisation approach is to employ the energy condition for the Lipschitz metric $g$, i.e., the condition that $\Ric[g]$ is a non-negative distribution to derive local curvature bounds on the regularised metrics $g_\eps$. However, since convergence of $\Ric[g_\eps]$ (and so by \Cref{Le:approximating metrics}(iii) $\Ric[\check g_\eps]$, which is the more relevant quantity in our approach) to $\Ric[g]$ is merely distributional, we will exploit the non-negativity of $\Ric[g]\star_M\rho_\eps$ instead. To make use of this property we have to control the difference between the latter quantity and $\Ric[g_\eps]$. Deriving the required estimates is the main aim of this section. More precisely we are going to establish the following result:

\begin{prop}\label{Pr:Ricci Lp-convergence}
    Let $(M,g)$ be a Lorentzian manifold with a locally Lipschitz metric $g$. Then for $g_\eps(=g\star_M \rho_\eps)$ we have for any compact $K\Subset M$
    \begin{itemize}
    \item[(i)] $\Vert\Ric[g_\varepsilon]-\Ric[g]\star_M \rho_\varepsilon\Vert_{L^p(K)}\rightarrow 0$ for all $1\leq p<\infty$, and
    \item[(ii)]  there exists some $C_K>0$ such that for $\eps$ small enough
    \begin{equation*}
        \smash{\Vert\Ric[g_\varepsilon]-\Ric[g]\star_M \rho_\varepsilon\Vert_{L^\infty(K)}\leq C_K.}
    \end{equation*}
    \end{itemize}
\end{prop}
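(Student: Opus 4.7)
Since $\Ric[g]$ depends locally on $g$ and the manifold convolution $\star_M$ in~\eqref{eq:M-convolution} reduces on a compact set $K\Subset M$ and for $\eps$ smaller than a threshold $\eps_K>0$ to a finite sum of Euclidean convolutions in a fixed collection of charts covering $K$, I would work chart-by-chart in $\R^n$ with the usual convolution $\ast$. In local coordinates Ricci admits the Christoffel decomposition
\begin{equation*}
\Ric[g]=\d\Gamma[g]+\Gamma[g]\cdot\Gamma[g],\qquad \Gamma[g]\sim g^{-1}\cdot\d g,
\end{equation*}
where for $g\in C^{0,1}$ the symbols $\Gamma[g]$ and $\Gamma[g]\cdot\Gamma[g]$ lie in $L^\infty_{\loc}$, while $\d\Gamma[g]$ is a distribution of order one. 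Correspondingly,
\begin{equation*}
\Ric[g_\eps]-\Ric[g]\ast\rho_\eps=\d\bigl(\Gamma[g_\eps]-\Gamma[g]\ast\rho_\eps\bigr)+\bigl(\Gamma[g_\eps]\cdot\Gamma[g_\eps]-(\Gamma[g]\cdot\Gamma[g])\ast\rho_\eps\bigr),
\end{equation*}
and it suffices to bound each summand in $L^\infty_{\loc}$ uniformly in $\eps$ and show convergence to zero in $L^p_{\loc}$ for every $1\le p<\infty$.

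The quadratic tail is the easier of the two. Expanding $\Gamma[g]\cdot\Gamma[g]=F(g^{-1})\cdot\d g\cdot\d g$ with $F$ polynomial and telescoping, the difference reduces to a finite sum of zeroth-order Friedrichs commutators of the shape $f(h\ast\rho_\eps)-(fh)\ast\rho_\eps$ with $f\in C^{0,1}_{\loc}$ (entries of polynomials in $g^{-1}$) and $h\in L^\infty_{\loc}$ (products of entries of $\d g$), plus benign pointwise-small remainders produced by $g_\eps^{-1}-g^{-1}=O(\eps)$ and $\d g_\eps=\d g\ast\rho_\eps$. The identity $f(x)(h\ast\rho_\eps)(x)-(fh)\ast\rho_\eps(x)=\int(f(x)-f(x-y))h(x-y)\rho_\eps(y)\,dy$ bounds each such commutator by $C\eps$ in $L^\infty_{\loc}$, so the quadratic tail is $O(\eps)$ in $L^\infty_{\loc}$ and in particular vanishes in every $L^p_{\loc}$.

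The derivative summand is the analytic heart of the proposition. Decomposing
\begin{equation*}
\Gamma[g_\eps]-\Gamma[g]\ast\rho_\eps=[g_\eps^{-1}-g^{-1}](\d g\ast\rho_\eps)+\bigl[g^{-1}(\d g\ast\rho_\eps)-(g^{-1}\d g)\ast\rho_\eps\bigr]
\end{equation*}
and applying $\d$, the Leibniz identity $\d[M_f,\ast\rho_\eps](h)=[M_{\d f},\ast\rho_\eps](h)+[M_f,\ast\rho_\eps](\d h)$ rewrites the result in terms of commutators of the two generic types $[M_{\d g^{-1}},\ast\rho_\eps](\d g)$ (two $L^\infty$ factors, handled by a Lebesgue-point argument plus dominated convergence) and $[M_{g^{-1}},\ast\rho_\eps](\d^2 g)$ (Lipschitz coefficient against an order-one distribution), together with the mixed product $[g_\eps^{-1}-g^{-1}]\,\d^2 g_\eps$. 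For this last term the uniform cancellations $g_\eps^{-1}-g^{-1}=O(\eps)$ and $\|\d^2 g_\eps\|_{L^\infty}=O(\eps^{-1})$ combine to yield the $L^\infty$-bound~(ii) for free, but the $L^p$-smallness in (i) does not follow from this scaling alone: this is \emph{the main obstacle}, and exactly what the general Friedrichs-type Lemma~\ref{order1Fridrichs} is formulated to overcome. Once that lemma is invoked to handle both the first-derivative-distribution commutator $[M_{g^{-1}},\ast\rho_\eps](\d^2 g)$ and, after a further Leibniz rewriting expressing $[g_\eps^{-1}-g^{-1}]\,\d^2 g_\eps$ as a total derivative of an $O(\eps)$ quantity plus Friedrichs-controlled remainders, the mixed product, every remaining piece converges in $L^p_{\loc}$ ($p<\infty$) by Hölder and dominated convergence using the convergences and uniform bounds from \Cref{Le:approximating metrics}, concluding both (i) and (ii).
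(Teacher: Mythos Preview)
Your proposal is correct and takes essentially the same route as the paper: localize to charts, split $\Ric=\d\Gamma+\Gamma\cdot\Gamma$, dispatch the quadratic tail by a zeroth-order Friedrichs estimate (cf.\ \eqref{eq:0order}), and invoke Lemma~\ref{order1Fridrichs} for the derivative part. The paper is one step more direct: it observes that $\Gamma[g_\eps]-\Gamma[g]*\rho_\eps$ is \emph{already} of the form $a_\eps(f*\rho_\eps)-(af)*\rho_\eps$ with $a_\eps=(g_\eps)^{-1}$, $a=g^{-1}$, $f=\xi\,\d g$, so a single application of Lemma~\ref{order1Fridrichs} to its derivative handles both (i) and (ii) for the leading terms at once; your further decomposition into the commutators $[M_{\d g^{-1}},*\rho_\eps](\d g)$, $[M_{g^{-1}},*\rho_\eps](\d^2 g)$ and the mixed product $(g_\eps^{-1}-g^{-1})\,\d^2 g_\eps$ simply reproduces, at the level of the proposition, the $K_\eps/H_\eps$-split that is carried out \emph{inside} the proof of that lemma. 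One caution on your sketch for the mixed product: rewriting it as the derivative of an $O(\eps)$ quantity does not by itself force that derivative to be $L^p$-small---this step genuinely needs the $W^{1,p}$-convergence $g_\eps^{-1}\to g^{-1}$ (Lemma~\ref{Le: a and a_eps}(iii)), which is precisely what the $H_\eps$-estimate in the proof of Lemma~\ref{order1Fridrichs} exploits.
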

As a first step towards a proof of Proposition \ref{Pr:Ricci Lp-convergence}, note that explicitly calculating from \eqref{eq:M-convolution} the push-forward under a chart of $g_\eps$, the local expressions of the relevant terms in $\Ric[g_\varepsilon]-\Ric[g]\star_M \rho_\varepsilon$ containing all second order derivatives of $g$ take the form (cf.\ \cite[proof of Lemma 4.6]{Gra:20}) of first order derivatives of
\begin{equation}\label{incrediblequat}
\begin{split}
\big[(\psi_\beta)_*g_\varepsilon\big]^{ij}
\big(
\big[\xi\partial_k((\psi_\beta)_*g)_{lm}\big]
*\rho_\varepsilon\big)-\big(
\big[(\psi_\beta)_*g\big]^{ij}
\xi\partial_k(( & \psi_\beta)_*g)_{lm}
\big)*\rho_\varepsilon \\ 
&=:a_\varepsilon (f*\reps)-(af)*\reps,
\end{split}
\end{equation}
where $\psi_\beta$ is a coordinate chart, $\xi$ a cutoff function,  $\rho_\varepsilon$ a standard mollifier, and $*$ denotes the usual convolution on $\R^n$.
According to \eqref{eq:M-convolution} and the remarks following it, in a neighbourhood of any $K\Subset M$ we can write
\begin{equation}
 (\psi_\beta)_*g_\eps = \sum_\alpha (\psi_\alpha\circ \psi_\beta^{-1})^*(((\psi_\alpha)_*(\xi_\alpha g))*\rho_\eps).   
\end{equation}
Now set $\tilde g_\alpha:=(\psi_\alpha)_*(\xi_\alpha g)$. Then if each $\tilde g_\alpha * \rho_\eps$ converges in $W^{1,p}_{\mathrm{loc}}\cap L^\infty_{\mathrm{loc}}$, so does  $(\psi_\beta)_*g_\eps$. Thus for all further calculations in this section we may without loss of generality consider the case $M=\R^n$ with the single chart $\psi_\alpha=\mathrm{id}$ and $g_\eps=g*\rho_\eps$. Then $a_\eps$ becomes a component of the inverse of the smoothed metric $(g*\rho_\eps)^{ij}$, and $a$ is a component of $g^{ij}$. Also, $f=\xi(\partial_k g)_{lm}$ is a compactly supported $L^\infty$-function. Since $a_\varepsilon\neq a*\rho_\varepsilon$, we reserve the notation $a_\varepsilon$ just for that term and write the convolution explicitly for all the others.
Hence, to prove \Cref{Pr:Ricci Lp-convergence} we have to show that for such  $a$, $a_\eps$ and $f$ we have for all $K\Subset \R^n$
\begin{equation}\label{eq:est}
\begin{split} 
    &a_\eps (f*\rho_\eps) - (af)*\rho_\eps \to 0\quad\ \text{in $W^{1,p}(K)$ for each $p\in [1,\infty)$, as well as}\\[.2\baselineskip]
    &\|a_\eps (f*\rho_\eps) - (af)*\rho_\eps\|_{W^{1,\infty}(K)} \le C_K.
\end{split}  
\end{equation}

 We first collect all relevant properties of $a_\eps$ and $a$ to be used below. 

\begin{lem}\label{Le: a and a_eps}
    Let $a_\eps$ and $a$ be components of $(g*\rho_\eps)^{ij}$ and $g^{ij}$, respectively. Then on any compact $K\Subset\R^n$ we have
    \begin{enumerate}
        \item $a$ is Lipschitz on $K$
        \item $a_\eps$ is smooth and Lipschitz on $K$, uniformly in $\eps$
        \item $a_\eps \to a \ \text{in} \ W^{1,p}(K)$ for $1\leq p<\infty$
        \item $|a_\eps(x)-a(x)|\leq C_K\eps$.
    \end{enumerate}
\end{lem}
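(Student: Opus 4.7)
My plan rests on three facts about $g$: (a) since $g$ is continuous and Lorentzian, $\det g$ is continuous and nowhere zero, hence bounded away from zero on any compact $K\Subset \R^n$; (b) since $g\in C^{0,1}$, its first partials lie in $L^\infty_{\mathrm{loc}}$; and (c) for Lipschitz $f$ one has $\|f*\reps - f\|_{L^\infty(K)}\le C\eps$, while for bounded $f$ one has $\|f*\reps\|_{L^\infty(K)}\le \|f\|_{L^\infty(K')}$, where $K'$ is a slightly enlarged compact absorbing the mollification radius. I would prove the four items not in their stated order, but grouping them by technique.

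For (1) I would invoke the cofactor formula $g^{ij}=\mathrm{cof}(g)^{ij}/\det g$: cofactors are polynomials in the Lipschitz entries of $g$ hence Lipschitz, and (a) lets one divide safely. For (4), which I would handle next since it feeds into (2) and (3), I would apply (c) to $g$ to get $\|g*\reps-g\|_{L^\infty(K)}\le C\eps$, note that this uniform convergence forces $\det(g*\reps)$ to remain bounded away from zero on $K$ for small $\eps$, and conclude via the matrix identity
\begin{equation*}
A^{-1}-B^{-1} = A^{-1}(B-A)B^{-1}
\end{equation*}
with $A=g$, $B=g*\reps$.

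For (2), smoothness of $a_\eps$ is immediate; the uniform Lipschitz bound follows by differentiating the inverse,
\begin{equation*}
\partial_k(g*\reps)^{ij} = -(g*\reps)^{il}(g*\reps)^{jm}\,(\partial_k g_{lm})*\reps,
\end{equation*}
and bounding each factor: the inverse components are uniformly bounded by (4) and the convolved partials by (b)+(c). For (3), $L^p$ convergence of $a_\eps$ itself to $a$ is immediate from (4). For the derivatives I would insert the intermediate quantity $g^{il}g^{jm}\big((\partial_k g_{lm})*\reps\big)$ and split
\begin{equation*}
\partial_k a_\eps - \partial_k a = \big[g^{il}g^{jm}-(g*\reps)^{il}(g*\reps)^{jm}\big]\big((\partial_k g_{lm})*\reps\big) + g^{il}g^{jm}\big[(\partial_k g_{lm})*\reps - \partial_k g_{lm}\big].
\end{equation*}
The first bracket tends to zero uniformly on $K$ by (4) while its partner is uniformly $L^\infty$-bounded, and the second bracket tends to zero in $L^p(K)$ for $p<\infty$ by standard mollifier convergence of $L^\infty$ functions, with bounded coefficient.

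No step presents a genuine obstacle; the only point requiring genuine care is guaranteeing uniform non-degeneracy of $g*\reps$ on $K$ before any inversion formula is used. This is why I would carry out (4) early: it supplies both the quantitative $\eps$-rate and the $L^\infty$ control of $(g*\reps)^{ij}$ on which the uniform Lipschitz bound in (2) and the telescoping argument in (3) both rely.
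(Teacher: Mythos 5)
Your proposal is correct and rests on exactly the same ingredients as the paper's proof: the cofactor/inversion structure of $g^{ij}$ together with $\det g$ being bounded away from zero on compact sets, the estimate $\|f*\rho_\eps-f\|_{L^\infty(K)}\le C\eps$ for Lipschitz $f$, uniform $L^\infty$ control of $(\partial_k g_{lm})*\rho_\eps$, and standard $L^p$ mollifier convergence for the bounded derivatives. The only difference is one of bookkeeping: where the paper inserts and subtracts terms in the cofactor expansion (for (ii) and (iv)) and defers (i), (iii) to Lemma \ref{Le:approximating metrics}(ii), you package the same estimates via the identity $A^{-1}-B^{-1}=A^{-1}(B-A)B^{-1}$ and the formula for the derivative of an inverse matrix, which is an equally valid and arguably tidier route to the same bounds.
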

\begin{proof}
(i) and (iii) follow from the cofactor formula and standard properties of convolution, cf.\ 
 \Cref{Le:approximating metrics}(ii).

To prove (ii), recall first that for any locally Lipschitz function $h$ we have again by standard properties of the convolution that for all small $\eps$
\begin{align}
 \label{Eq: lip of reg on Rn}\Lip(h*\reps,K) \leq \Lip(h,K'),
\end{align}
where $\Lip(h,K')$ is the Lipschitz constant of $h$ on a suitable compact neighbourhood $K'$  of $K$. Next we explicitly write out $a_\varepsilon$ using the cofactor formula 
\begin{equation}
\begin{split}
    \abs{a_\varepsilon(x)-a_\varepsilon(z)}&=\abs{\frac{1}{\mathrm{det}\,(g*\rho_\varepsilon)(x)}((g*\rho_\varepsilon)^{\mathrm{cof}})_{ij}(x)
    -\frac{1}{\mathrm{det}\,(g*\rho_\varepsilon)(z)}((g*\rho_\varepsilon)^{\mathrm{cof}})_{ij}(z)}  \\
    &\le\abs{\frac{1}{\mathrm{det}\,(g*\rho_\varepsilon)(x)}}\ 
    \bigg\vert ((g*\rho_\varepsilon)^{\mathrm{cof}})_{ij}(x)-((g*\rho_\varepsilon)^{\mathrm{cof}})_{ij}(z)\bigg\vert  \\ 
     &\hspace{3cm}+\big| ((g*\rho_\varepsilon)^{\mathrm{cof}})_{ij}(z)\big|\ 
     \bigg\vert\frac{1}{\det(g*\rho_\varepsilon)(x)}
     -\frac{1}{\det(g*\rho_\varepsilon)({z})}\bigg\vert\\
    &
    {\leq}\bigg[\ C_1\ \abs{\frac{1}{\det(g*\rho_\varepsilon)({x})}}
    + \ C_2\ \abs{\frac{{1} 
    }{\det(g*\rho_\varepsilon)(z)\det(g*\rho_\varepsilon)(x)}} 
    \bigg]\ \abs{z-x}, 
\end{split}
\end{equation}
where the constants $C_1$, $C_2$ depend on the Lipschitz constants and the $L^\infty$-bounds of the components of $g$ on a suitable compact neighbourhood $K'$ 
of $K$. Now since $\det g$ is uniformly bounded away from zero on $K'$ and once more by the uniform bounds on $g$ we obtain the uniform Lipschitz property on $K$.

Finally, in order to prove (iv) we again write out the cofactor formula and, by inserting and subtracting terms, we obtain a sum of terms which are products of (uniformly in $\eps$) bounded functions with only a single factor being a difference of a component of $g$ and its convolution with the mollifier. Since the latter is bounded by a constant times $\eps$ the result follows.
\end{proof}

We now begin to prove \eqref{eq:est}. Note that the zero order estimates follow easily from 
standard properties of the convolution and Lemma \ref{Le: a and a_eps}. In fact, we even have for each $K\Subset \R^n$
\begin{equation}\label{eq:0order}
    \Vert a_\varepsilon (f*\rho_\varepsilon)-(af)*\rho_\varepsilon\Vert_{L^\infty(K)}\leq C_{K'}\ \eps\ \Vert f\Vert_{L^\infty(K')},
\end{equation}
which also implies suitable estimates for any remaining terms in $\Ric[g_\eps]-\Ric\star_M \rho_\eps$ containing at most first derivatives of $g$.

The first order estimates are more delicate and we establish them
in the following statement, whose proof follows a layout similar to that of \cite[Appendix A]{MS:02}.
\begin{lem}[Friedrichs Lemma]\label{order1Fridrichs}
    Let $a_\varepsilon$, $a$ be as in Lemma \ref{Le: a and a_eps} and let $f\in L^\infty(\R^n)$ be compactly supported. Then we have for each $K\Subset \R^n$ and each $1\leq j\leq n$
    \begin{itemize}
        \item [(i)] $\left\Vert\partial_j\big(a_\varepsilon (f*\rho_\varepsilon)-(af)*\rho_\varepsilon\big)\right\Vert_{L^p(K)}\rightarrow0$ for all $p\in[1,\infty)$, and        
        \item[(ii)] there is $C_K>0$ such that $\left\Vert\partial_j\big(a_\varepsilon (f*\rho_\varepsilon)-(af)*\rho_\varepsilon\big)\right\Vert_{L^\infty(K)}\leq C_K$. 
    \end{itemize}
\end{lem}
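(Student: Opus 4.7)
The plan is to apply the product rule and split
\[ E_\eps := \partial_j\bigl[a_\eps(f*\rho_\eps) - (af)*\rho_\eps\bigr] = T_1 + T_2 + T_3, \]
where $T_1 := (\partial_j a_\eps)(f*\rho_\eps)$, $T_2 := (a_\eps - a)(f*\partial_j\rho_\eps)$, and $T_3 := a(f*\partial_j\rho_\eps) - (af)*\partial_j\rho_\eps$. Writing $T_3(x) = \int[a(x)-a(y)]f(y)\partial_j\rho_\eps(x-y)\,dy$ and substituting $y = x - \eps z$ together with the mean-value identity $a(x) - a(x-\eps z) = \eps\int_0^1 z\cdot\nabla a(x-t\eps z)\,dt$ (valid a.e.\ by Rademacher), I rewrite
\[ T_3(x) = \int_0^1\!\!\int z\cdot\nabla a(x-t\eps z)\,f(x-\eps z)\,\partial_j\rho(z)\,dz\,dt. \]

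For the $L^\infty$ bound (ii), each piece is uniformly bounded on any compact $K$: $T_1$ uses the uniform Lipschitz bound on $a_\eps$ from \Cref{Le: a and a_eps}(ii); $T_3$ uses $|z\cdot\nabla a|\leq L|z|$ against the integrable weight $|\partial_j\rho|$; and $T_2$ combines $|a_\eps - a|\leq C\eps$ (\Cref{Le: a and a_eps}(iv)) with the pointwise bound $|f*\partial_j\rho_\eps|\leq C'\|f\|_\infty\eps^{-1}$.

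For the $L^p$ convergence (i), I treat $T_1 + T_3$ and $T_2$ separately. Using $\int z_i\,\partial_j\rho\,dz = -\delta_{ij}$, the expected pointwise limits are $T_1 \to (\partial_j a)f$ and $T_3 \to -(\partial_j a)f$. To make this precise in $L^p(K)$, I decompose
\[ T_1 + T_3 = (\partial_j a_\eps - \partial_j a)(f*\rho_\eps) + (\partial_j a)(f*\rho_\eps - f) + \bigl[T_3 + (\partial_j a)f\bigr], \]
where the first two terms tend to $0$ in $L^p(K)$ by \Cref{Le: a and a_eps}(iii) together with standard mollifier estimates, and the third is handled via Minkowski's integral inequality applied to the $(t,z)$-integral representation of $T_3$, using $L^p$-translation continuity of $\nabla a$ and $f$ (both in $L^\infty \subset L^p_{\mathrm{loc}}$).

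The main obstacle is showing $T_2\to 0$ in $L^p(K)$, because the naive estimate gives only $|T_2|\leq C\eps\cdot C'\eps^{-1}\|f\|_\infty = O(1)$, which matches the $L^\infty$ bound but provides no decay. The key trick is to exploit $\int\partial_j\rho = 0$, which lets me subtract $f(x)$ in the integrand of $f*\partial_j\rho_\eps$:
\[ (f*\partial_j\rho_\eps)(x) = \int[f(y) - f(x)]\,\partial_j\rho_\eps(x-y)\,dy, \]
so that, together with $|a_\eps-a|\leq C\eps$,
\[ |T_2(x)| \leq C''\,\frac{1}{|B(x,\eps)|}\int_{B(x,\eps)}|f(y) - f(x)|\,dy. \]
This tends to $0$ at every Lebesgue point of $f$, hence a.e.\ on $K$; combined with the uniform $L^\infty$ bound from (ii), dominated convergence yields $T_2\to 0$ in $L^p(K)$ for every $1\leq p<\infty$, completing both parts.
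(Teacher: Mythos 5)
Your decomposition and estimates are correct, and they give both (i) and (ii), but the route is genuinely different from the paper's. The paper follows the classical Friedrichs-lemma scheme (as in the appendix of the cited work of Braverman--Milatovic--Shubin): it splits the expression into two integral operators $K_\eps$ and $H_\eps$ with kernels $\partial_{x^j}\big((a(x)-a(y))\rho_\eps(x-y)\big)$ and $\partial_{x^j}\big((a_\eps(x)-a(x))\rho_\eps(x-y)\big)$, proves uniform $L^p(\R^n)\to L^p(K)$ bounds by Schur-type kernel estimates, verifies convergence to $0$ only on the dense subspace $C^\infty_c$, and then invokes the uniform-boundedness/density argument of \Cref{functionalanalysislemma}; the $L^\infty$ bound (ii) comes from the same kernel estimates. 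You instead apply the product rule to get three explicit terms and estimate them directly for the given rough $f$: the term $T_1+T_3$ is handled via the a.e.\ mean-value representation $a(x)-a(x-\eps z)=\eps\int_0^1 z\cdot\nabla a(x-t\eps z)\,\dint t$ (Rademacher plus absolute continuity of Lipschitz functions on lines---worth stating, though standard), the moment identity $\int z_i\,\partial_j\rho\,\dint z=-\delta_{ij}$, Minkowski's integral inequality and $L^p$-translation continuity of $\nabla a$ and $f$, while the delicate term $T_2=(a_\eps-a)(f*\partial_j\rho_\eps)$ is killed by combining $|a_\eps-a|\le C\eps$ from \Cref{Le: a and a_eps}(iv) with the cancellation $\int\partial_j\rho=0$, the Lebesgue differentiation theorem and dominated convergence on $K$. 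What each buys: your argument is more elementary and self-contained (no functional-analytic reduction, no uniform operator bounds, convergence is obtained directly for the given $f\in L^\infty$ with compact support), whereas the paper's operator formulation yields uniform $L^p$-bounds for the maps $f\mapsto K_\eps f$, $f\mapsto H_\eps f$ on all of $L^p(\R^n)$, which is slightly more general and reusable; both proofs use the same structural inputs from \Cref{Le: a and a_eps}, namely the uniform Lipschitz bounds, the $W^{1,p}$-convergence $a_\eps\to a$, and the $O(\eps)$ closeness of $a_\eps$ to $a$.
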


\begin{proof}
To show (i), we start by writing 
\begin{align}\label{incrediblequatsecond}
    \partial_j\Big(a_\eps (f*\rho_\eps) -(af)*\rho_\eps\Big) =
    \partial_j\Big((a_\varepsilon-a)(f*\rho_\varepsilon)\Big) +\partial_j\Big(a(f*\rho_\varepsilon)-(af)*\rho_\varepsilon\Big).
\end{align}
The idea is to write both terms as an integral operator acting on $f$ and to study the properties of the corresponding kernels. We start with the latter term on the r.h.s.\ of \eqref{incrediblequatsecond} and find
\begin{equation}\label{Eq:kernel integrated}
\begin{split}
   \partial_j\Big(a(f*\rho_\varepsilon)&-(af)*\rho_\varepsilon\Big)(x)
   \\
   &=\int_{\R^n} \frac{\partial}{\partial {x^j}}\Big(\big(a(x)-a(y)\big)\rho_\eps(x-y)\Big) f(y)\,\dint y
   =:\, \int_{\R^n}k_\eps(x,y)f(y)\,\dint y,  
\end{split}
\end{equation}
so the operator takes the form $K_\eps f(x)=\int_{\R^n} k_\eps (x,y)f(y)dy$ with 
\begin{equation}\label{eq:kernel}
    k_\eps(x,y)=\partial_{x^j}\Big(\big(a(x)-a(y)\big)\rho_\eps(x-y)\Big).
\end{equation}
Since for fixed $\eps$ the kernel $k_\eps$ is essentially bounded, it gives rise to a bounded operator $K_\eps:L^p(\R^n)\to L^p(K)$ for all $1\leq p<\infty$.
Moreover, the support of $k_\eps$ is contained in an $\eps$-neighbourhood of $\supp(a)\times \supp(a)$, and $k_\eps(x,y)=0$ for $|x-y|>\eps$.
Finally, to establish properties of $K_\eps$ that are uniform in $\eps$  we observe that the kernels $k_\eps$ satisfy:
    There is $C>0$ such that 
    \begin{equation}\begin{aligned}\label{eq:diff-kernel}
        &\int_{K}\vert k_\varepsilon(x,y) \vert\,\dint x\leq C\quad
        \text{for all $y\in \R^n$ and all $\varepsilon>0$,}\\
        &\int_{\R^n}\abs{k_\varepsilon(x,y)}\,\dint y\leq C\quad\text{for all $x\in K$ and all $\varepsilon>0$.}
    \end{aligned}
    \end{equation}
    Indeed writing the kernel as
    \begin{align}\label{eq:kernel-deriv}
        k_\eps(x,y)=\frac{\partial a(x)}{\partial x^j}\,\rho_\eps(x-y)
        +\big(a(x)-a(y)\big)\,\frac{\partial \rho_\eps(x-y)}{\partial x^j}
    \end{align}
    we obtain the estimate 
    \begin{align}\label{eq:kernel-dx}
        \int_{K}|k_\eps(x,y)|\,\dint x\leq 
        \Lip(a,K)+\eps\, \Lip(a,K')\, \frac{1}{\eps} \int \left|\frac{\partial \rho(z)}{\partial z^j}\right|\,\dint z,
    \end{align}
    were $K'$ is a compact neighbourhood of $K$. So we obtain \eqref{eq:diff-kernel} with $C=(1+\int |\nabla \rho|)\Lip(a,K')$ and the same reasoning applies to the integral with respect to the $y$-variable.
    
    From 
    \eqref{eq:diff-kernel} we immediately obtain uniform $L^1$-boundedness of $K_\eps$, and indeed also for $L^p$ ($1<p<\infty$). In fact, taking advantage of both estimates in \eqref{eq:diff-kernel} and Hölder's inequality we have for $1/p+1/q=1$
    \begin{align}\nonumber
        \Vert K_\varepsilon f\Vert_{L^p(K)}^p&=\int_{K}\abs{\int_{\R^n}k_\varepsilon(x,y)f(y)\,\dint y}^p\dint x\leq\int_{K}\bigg(\int_{\R^n}\abs{k_\varepsilon(x,y)^{\frac{1}{p}+\frac{1}{q}}f(y)}\,\dint y\bigg)^p\dint x\\
        &\label{Eq:K L1}\leq \int_{K}   \bigg[\bigg(\int_{\R^n}\abs{k_\varepsilon(x,y)}\abs{f(y)}^p\,\dint y\bigg)^\frac{1}{p}\bigg(\int_{\R^n}\abs{k_\varepsilon(x,y)}\,\dint y\bigg)^\frac{1}{q}\bigg]^p\dint x\\
        \nonumber&\leq C^\frac{p}{q}\int_{\R^n} \abs{f(y)}^p\bigg(\int_{K}\abs{k_\varepsilon(x,y)}\,\dint x\bigg)\,\dint y
         \leq C^p\ \Vert f\Vert^p_{L^p(\R^n)}.
    \end{align}   
Using \eqref{eq:diff-kernel} we will now show that $K_\eps f\to 0$ in $L^p(K)$ for each $f\in L^p(\R^n)$. In fact we would only need to consider $f\in L^\infty(\R^n)$ with compact support. Being a uniformly bounded family of operators it suffices to establish that $\Vert K_\eps f\Vert_{L^p(K)} \to 0$ for any test function $f$ in the dense subspace $C^\infty_c(\R^n)$ of $L^p(\R^n)$, see \Cref{functionalanalysislemma}, below. While this follows from \eqref{incrediblequatsecond} by mostly standard tricks for smoothing by convolution, we include the detailed estimates here for convenience.

So suppose that $f$ is a compactly supported smooth function,  
then we have
 \begin{align}\nonumber
    \int_{K}\vert K_\eps f(x)\vert^p\,\dint x
       &=\int_{K}\bigg\vert\int_{\R^n}\bigg[\frac{\partial a(x)}{\partial x^j}\reps(x-y)f(y)+\big(a(x)-a(y)\big)\frac{\partial\reps(x-y)}{\partial x^j}f(y)\bigg]\,\dint y\bigg\vert^p\dint x\\[.5\baselineskip]\nonumber
    &
    =\int_{K}\bigg\vert\int_{\R^n}\bigg[\frac{\partial a(x)}{\partial x^j}\reps(x-y)\big(f(y){-f(x)}\big)\,{+}\,\frac{\partial a(x)}{\partial x^j}\reps(x-y){f(x)}\\\nonumber
    &\hspace{5.5cm}-\big(a(x)-a(y)\big)\frac{\partial\reps(x-y)}{\partial y^j}f(y)\bigg]\,\dint y\bigg\vert^p\dint x\\
    &
    =\int_{K}\bigg\vert\int_{\R^n}\frac{\partial a(x)}{\partial x^j}\reps(x-y)\big(f(y)-f(x)\big)\,\dint y\\\nonumber
    &\hspace{3cm}+\int_{\R^n}\bigg[\frac{\partial a(x)}{\partial x^j}\reps(x-y)f(x)-\frac{\partial a(y)}{\partial y^j}\reps(x-y)f(y)\bigg]\,\dint y\\\nonumber
    &\hspace{3cm}+\int_{\R^n}\big(a(x)-a(y)\big)\reps(x-y)\frac{\partial f(y)}{\partial y^j}\,\dint y\bigg\vert^p\dint x\\\nonumber
   &=:\int_{K}\abs{(\Box)_1(x)+(\Box)_2(x)+(\Box)_3(x)}^p\,\dint x.
    \end{align}
    Now we go on estimating the first term on the right hand side. Calculating similarly to \eqref{Eq:K L1} we have
   \begin{equation}
    \begin{split}
        \int_K|(\Box)_1&(x)|^p\,\dint x=\int_K\abs{\int_{\R^n}\frac{\partial a(x)}{\partial x^j}\big(f(y)-f(x)\big)\reps(x-y)^{\frac{1}{p}+\frac{1}{q}}\,\dint y}^p\,\dint x\\[.5\baselineskip]
       &\leq\int_K\bigg[\bigg(\int_{\R^n}\abs{\frac{\partial a(x)}{\partial x^j}\big(f(y)-f(x)\big)}^p\reps(x-y)\,\dint y\bigg)^\frac{1}{p}\bigg(\int_{\R^n}\reps(x-y)\,\dint y\bigg)^\frac{1}{q}\bigg]^p\,\dint x\\[.5\baselineskip]
        &=\int_K\abs{\frac{\partial a(x)}{\partial x^j}}^p\int_{\R^n}\abs{f(y)-f(x)}^p\reps(x-y)\,\dint y\,\dint x\\[.5\baselineskip]&
        \label{Eq:box 1}
        \leq \Lip(a,K)^p\ \eps^p\ \Vert\nabla f\Vert^p_{L^\infty(K')}\ \abs{K}.
    \end{split}
     \end{equation}
    In the same way we estimate the $(\Box)_3$-term by
    \begin{align}
        \nonumber\int_K|(\Box)_3(x)|^p\,\dint x&=\int_K\abs{\int_{\R^n}\big(a(x)-a(y)\big)\ \frac{\partial f(y)}{\partial y^j}\ \reps(x-y)\,\dint y}^p\,\dint x\\[.5\baselineskip]  \label{Eq:box 3}
       &\leq\int_K\int_{\R^n}\abs{a(x)-a(y)}^p\abs{\frac{\partial f(y)}{\partial y^j}}^p\reps(x-y)\,\dint y\,\dint x\\[.5\baselineskip] \nonumber
      &\leq\varepsilon^p\ \Lip(a,K')^p\ \Vert\nabla f\Vert_{L^\infty(K')}^p\ \abs{K}.
    \end{align}
    Finally for $(\Box)_2$ we have
    \begin{align}
        \int_K|(\Box)_2(x)|^p\,\dint x&=\nonumber\int_K\abs{\int_{\R^n}\bigg[\frac{\partial a(x)}{\partial x^j}f(x)-\frac{\partial a(y)}{\partial y^j}f(y)\bigg]\reps(x-y)\,\dint y}^p\,\dint x\\[.5\baselineskip]
        \label{Eq:box 2r}
        &=\int_K\abs{\frac{\partial a(x)}{\partial x^j}f(x)-\int_{\R^n}\frac{\partial a(y)}{\partial y^j}f(y)\reps(x-y)\,\dint y}^p\,\dint x\\[.5\baselineskip] \nonumber
        &=\left\Vert(\partial_ja)f-\big((\partial_ja)f\big)*\reps\right\Vert_{L^p(K)}^p,
    \end{align}
which goes to zero. Indeed $(\partial_ja)f\in L^\infty(K)$ and the conclusion follows from standard properties of convolution (see e.g.\  \cite[Theorem 2.29 (c)]{A:75}).
Putting \eqref{Eq:box 1}, \eqref{Eq:box 3} and \eqref{Eq:box 2r} together via the triangle inequality we 
obtain $K_\eps f\to 0$ in $L^p(K)$ for all test functions $f$. 

 To finish the proof of (i) we still have to deal with the first term on the r.h.s.\ of \eqref{incrediblequatsecond}.
Following the same strategy, we rewrite it as
\begin{align}
    \int_{\R^n}\frac{\partial}{\partial x^j}\Big[\big(a_\varepsilon(x)-a(x)\big)\reps(x-y)\Big]f(y)\,\dint y=:\int_{\R^n}h_\varepsilon(x,y)f(y)\,\dint y=:H_\varepsilon f(x)
\end{align}
Then we have similarly to the above:
    There exists $C>0$ such that
    \begin{equation}\begin{aligned}\label{Eq:H_eps dx bounded}
    &\int_{K}\abs{h_\varepsilon(x,y)}\,\dint x\leq C\quad\text{for all $y\in \R^n$ and all $\varepsilon>0$,}\\
    &\int_{\R^n}\abs{h_\varepsilon(x,y)}\,\dint y\leq C\quad\text{for all $x\in K$ and all $\varepsilon>0$.}\\
    \end{aligned}\end{equation}
  Indeed, writing out $h_\varepsilon$ explicitly we have
  \begin{align}\label{Eq: h_eps explicit}
      h_\varepsilon(x,y)=\frac{\partial}{\partial x^j}\big(a_\varepsilon(x)-a(x)\big)\reps(x-y)+\big(a_\varepsilon(x)-a(x)\big)\frac{\partial}{\partial x^j}\reps(x-y),
  \end{align}
  and 
  using Lemma \ref{Le: a and a_eps}(i), (ii) and (iv) we obtain the estimate
 \begin{align}
      \int_{{K}}\abs{h_\varepsilon(x,y)}\,\dint x
      \leq {\Tilde{C}}  
      + \varepsilon\, C_K\, \frac{1}{\varepsilon}\, \int_K\  \left|\frac{\partial\rho(z)}{\partial z^j}\right|\ \dint z,
\end{align} 
  for some constant
  $\Tilde{C}$, and the same arguments apply to the $y$-integral.

  Now it follows as above that the operators $H_\varepsilon: L^p(\R^n)\to L^p(K)$ are uniformly bounded for $1\leq p<\infty$. 
Hence to prove pointwise convergence in $L^p$ 
we again only have to consider test functions $f$. First we write
\begin{equation}
    \begin{split}
        H_\eps f(x)&=\int_{\R^n}\Big[\frac{\partial}{\partial x^j}\big(a_\eps(x)-a(x)\big)\rho_\eps(x-a)+
        \big(a_\eps(x)-a(x)\big) \frac{\partial}{\partial x^j}\rho_\eps(x-y)\Big]\,f(y)\,\dint y\\
        &=\int_{\R^n} \frac{\partial}{\partial x^j}\big(a_\eps(x)-a(x)\big)\rho_\eps(x-a)f(y)\,\dint y\\
        &\hspace{5cm}
        + \int_{\R^n}  \big(a_\eps(x)-a(x)\big) \rho_\eps(x-y)  \frac{\partial}{\partial y^j}f(y)\,\dint y\\
        &=:(\Box)_4(x)\ +\ (\Box)_5(x).
    \end{split}
\end{equation}
Now we estimate using H\"{o}lder's inequality as in \eqref{Eq:K L1}
\begin{equation}
    \begin{split}
     \int_K |(\Box)_4(x)|^p \,\dint x 
     &\leq \int_K \Big( \int_{\R^n} \Big\vert \frac{\partial}{\partial x^j} \big(a_\eps(x)-a(x)\big)\Big\vert \ \rho_\eps(x-y)^{\frac{1}{p}+\frac{1}{q}}\ \vert f(y)\vert \,{\,\dint y} \Big)^p\,\dint x\\[.5\baselineskip]
     &\leq \Vert f \Vert_{L^\infty(K')}^p \int_K \Big\vert  \frac{\partial}{\partial x^j} \big(a_\eps(x)-a(x)\big)\Big\vert^p \int_{\R^n}\rho_\eps(x-y)\ \,\dint y\ \,\dint x\\[.5\baselineskip]
     &\leq \Vert f \Vert_{L^\infty(K')}^p \ \Vert a_\eps-a\Vert_{W^{1,p}(K')}^p,
    \end{split}
\end{equation}
which goes to zero by Lemma \ref{Le: a and a_eps}(iii). Finally we have 
\begin{equation}
    \begin{split}
          \int_K |(\Box)_5(x)|^p \,\dint x 
          &\leq \int_K  \Big| \big(a_\eps(x)-a(x)\big) \int_{\R^n}  \rho_\eps(x-y) \frac{\partial}{\partial y^j}f(y)
          \,\dint y\Big|^p\,\dint x\\[.5\baselineskip]
          &\leq \Vert \nabla f\Vert _{L^\infty(K')}^p\ \Vert a_\eps-a\Vert_{L^\infty(K)}^p\ \abs{K},
    \end{split}
\end{equation}
which goes to zero thanks to Lemma \ref{Le: a and a_eps}(iv). Summing up we have  
shown that $H_\eps f\to 0$ in $L^p(K)$ for all test functions $f$ and so we obtain (i).

\medskip

To prove (ii) we use the decomposition of the kernel $k_\eps$ given in \eqref{eq:kernel-deriv}, which for $x\in K$ leads to 
\begin{align}
|K_\eps f(x)|
&\leq \int_{\R^n} \left|\frac{\partial a(x)}{\partial x^j}\right|\,\rho_\eps(x-y)\, |f(y)| \,\dint y 
+\int_{\R^n} \left|a(x)-a(y)\right|\,\left|\frac{\partial \rho_\eps(x-y)}{\partial x^j}\right|\, |f(y)|\,\dint y\\[.5\baselineskip]
&\leq \Lip(a,K)\ \Vert f\Vert_{L^\infty(K')}\ + \ 
\varepsilon\ \Lip(a,K')\ \frac{1}{\varepsilon}\ \Vert \nabla\rho\Vert_{L^\infty(K')} \ \Vert f\Vert_{L^\infty(K')}.
\end{align}
To derive the desired $L^\infty$-bound it only remains to take care of the term $H_\varepsilon f$ on the r.h.s. of \eqref{incrediblequatsecond}. Similarly to the above we have for $x\in K$
\begin{align}\nonumber
    \abs{H_\varepsilon f(x)}&\leq\int_{\R^n}\abs{\frac{\partial}{\partial x^j}\Big(a_\varepsilon(x)-a(x)\Big)}\ \reps(x-y)\ \abs{f(y)}\,\dint y\\&\hspace{5cm}+\int_{\R^n}\abs{a_\varepsilon(x)-a(x)}\abs{\frac{\partial \reps(x-y)}{\partial x^j}}\abs{f(y)}\,\dint y \\[.5\baselineskip]\nonumber
    &\leq {\tilde C\ 
    \Vert f\Vert_{L^\infty(K')}}\ +\ C_K\, \varepsilon\ \frac{1}{\varepsilon}\ \Vert\nabla\rho\Vert_{L^\infty(K')}\ \Vert f\Vert_{L^\infty(K')},
\end{align}
where we used \Cref{Le: a and a_eps} {(i), (ii) and (iv)} in the last inequality.
\end{proof}

With an estimate of the form of \eqref{eq:thm2.4} in mind, we also insert vector fields into the respective Ricci-terms. We now do so in a form directly usable in the course of our main proofs, cf.\ \Cref{Lem:(iv) of our Hawking theorem} below. The proof strategy is the same as in \Cref{order1Fridrichs}.

\begin{cor}\label{Le:Box2 of L4.8}
    Let $X, Y\in\mathfrak{X}(M)$. Then for all $K\Subset M$ and any $p\in [1,\infty)$ we have
\begin{align}\label{Eq: ric lem4.8}
(\Ric[g]\star_M\reps)(X,Y)-(\Ric[g](X,Y))\star_M\reps\rightarrow0\quad\text{in }L^p(K),
\end{align}
as $\eps\rightarrow 0$.
\end{cor}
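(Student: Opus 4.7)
My plan is to reduce the corollary to the commutator estimate already provided by \Cref{order1Fridrichs}. Since the claim is local and \eqref{eq:M-convolution} is a locally finite sum of chart-based operations, after pulling back through each $\psi_\alpha$ it suffices to prove that, in a fixed chart,
\[
\phi\,(T * \rho_\eps) - (\phi T) * \rho_\eps \;\longrightarrow\; 0 \quad \text{in } L^p(K),
\]
for every smooth function $\phi$ (built from the cutoffs $\chi_\alpha$, $\xi_\alpha$ together with the components $X^i$, $Y^j$) and every compactly supported distribution $T = \xi_\alpha R_{ij}$, where $R_{ij}$ is a local component of $\Ric[g]$.

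The key structural input is the decomposition $T = T_0 + \sum_k \partial_k T_k$ with $T_0, T_k \in L^\infty$ compactly supported in the chart. This is immediate from the coordinate formula $R_{ij} = \partial_k\Gamma^k_{ij} - \partial_i\Gamma^k_{kj} + \Gamma^k_{kl}\Gamma^l_{ij} - \Gamma^k_{il}\Gamma^l_{jk}$ combined with Leibniz applied to $\xi_\alpha$, since $g \in C^{0,1}$ gives $\Gamma \in L^\infty_{\mathrm{loc}}$. For the $L^\infty$-part one directly computes
\[
\phi(x)(T_0 * \rho_\eps)(x) - (\phi T_0) * \rho_\eps(x) = \int T_0(y)\bigl[\phi(x) - \phi(y)\bigr]\rho_\eps(x-y)\,dy,
\]
which is $O(\eps)$ in $L^\infty(K)$ by the smoothness of $\phi$, and hence converges in $L^p(K)$.

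For each derivative piece $\partial_k T_k$, commuting convolution with $\partial_k$ and applying Leibniz on both sides gives the identity
\[
\phi(\partial_k T_k * \rho_\eps) - (\phi\,\partial_k T_k) * \rho_\eps = \partial_k\bigl[\phi(T_k * \rho_\eps) - (\phi T_k) * \rho_\eps\bigr] + \bigl[((\partial_k \phi)\, T_k) * \rho_\eps - (\partial_k \phi)(T_k * \rho_\eps)\bigr].
\]
The first summand is the $\partial_k$-derivative of a Friedrichs-type commutator with Lipschitz coefficient $\phi$ and $L^\infty$-datum $T_k$, and so vanishes in $L^p(K)$ by \Cref{order1Fridrichs}(i) applied with $a_\eps \equiv a = \phi$ and $f = T_k$. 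The second summand is of the same shape as the $T_0$-term with $\phi$ replaced by $\partial_k \phi$, hence again $O(\eps)$ in $L^\infty(K)$. The real content of the proof is thus the Leibniz identity above, which absorbs the distributional derivative into an already-controlled commutator; what remains to check carefully is the chart-and-partition-of-unity bookkeeping, namely that the cutoffs $\chi_\alpha$, $\xi_\alpha$ absorb cleanly into $\phi$ while preserving compact supports so that \Cref{order1Fridrichs} is applicable chart by chart.
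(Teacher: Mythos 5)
Your proposal is correct and is essentially the paper's own argument: both reduce to a local commutator of a smooth factor (from $X^iY^j$ and the cutoffs) with terms of $\Ric[g]$ of the form ``$L^\infty$ plus derivatives of $L^\infty$ functions'', handle the zero-order part by an elementary $O(\eps)$ estimate, and absorb the distributional derivative via the same Leibniz rearrangement so that \Cref{order1Fridrichs}(i) applies with the constant net $a_\eps\equiv a$ given by the smooth coefficient. Your write-up just makes the chart/partition bookkeeping and the decomposition $T=T_0+\sum_k\partial_k T_k$ more explicit than the paper, which is fine.
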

\begin{proof}
As in the discussion following \eqref{incrediblequat}, we determine the relevant terms in the Ricci tensor to be locally of the form $\partial (af)$, where $a$ is Lipschitz and $f\in L^\infty$. This precise structure is not
required for the current proof, so we simply write $f\in L^\infty$ in  place of $af$. We use the letter $a$ instead to represent terms of the form $X^iY^j$ by a smooth function in local coordinates. Then proving \eqref{Eq: ric lem4.8} amounts to showing that
 \begin{align}\label{Eq: ric lem4.8 rewritten}
     [(\partial_j f)*\reps]a-[(\partial_j f)a]*\reps\rightarrow0\quad\text{in }L^p(K).
 \end{align}
Applying \Cref{order1Fridrichs} (i) to the constant net $a_\eps=a$ we obtain that
\begin{align}
(\partial_j a)(f*\rho_\eps) + [\partial_j(f*\rho_\eps)]a - [(\partial_j a)f]*\rho_\eps - [(\partial_j f)a]*\rho_\eps \rightarrow0\quad\text{in }L^p(K).
\end{align}
In this expression, $(\partial_j a)(f*\rho_\eps) - ((\partial_j a)f)*\rho_\eps \to 0$ in $L^p(K)$, so \eqref{Eq: ric lem4.8 rewritten} follows.
\end{proof}

Finally, we note the following standard result from functional analysis, which was key in the proof of \Cref{order1Fridrichs}.

\begin{lem}\label{functionalanalysislemma}
   Let $K\Subset\R^n$ and $1\leq p<\infty$. Suppose $T_n:\ L^p(\R^n)\rightarrow L^p(K)$ is a sequence of uniformly bounded linear operators such that $T_n\rightarrow0$ pointwise on $C^\infty_c(\R^n)$. Then $T_nf \to 0$ in $L^p(K)$ for each $f\in L^p(\R^n)$.
\end{lem}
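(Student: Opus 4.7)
The plan is to run a standard three-epsilon argument leveraging the density of $C^\infty_c(\R^n)$ in $L^p(\R^n)$ for $1 \le p < \infty$, together with the uniform norm bound on the operators $T_n$.

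First, set $M := \sup_n \|T_n\|_{L^p(\R^n) \to L^p(K)} < \infty$, which exists by hypothesis. Fix $f \in L^p(\R^n)$ and $\eta > 0$. Since $C^\infty_c(\R^n)$ is dense in $L^p(\R^n)$ (using $1 \le p < \infty$), I can choose $\phi \in C^\infty_c(\R^n)$ with $\|f - \phi\|_{L^p(\R^n)} < \eta/(2(M+1))$.

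Next, I estimate by the triangle inequality
\begin{equation*}
\|T_n f\|_{L^p(K)} \ \le\ \|T_n(f-\phi)\|_{L^p(K)} + \|T_n \phi\|_{L^p(K)} \ \le\ M\,\|f-\phi\|_{L^p(\R^n)} + \|T_n \phi\|_{L^p(K)}.
\end{equation*}
The first summand is bounded by $\eta/2$ uniformly in $n$ by the choice of $\phi$. For the second summand, the pointwise convergence hypothesis on $C^\infty_c(\R^n)$ gives $\|T_n \phi\|_{L^p(K)} \to 0$ as $n \to \infty$, so there exists $n_0$ such that $\|T_n\phi\|_{L^p(K)} < \eta/2$ for $n \ge n_0$. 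Combining yields $\|T_n f\|_{L^p(K)} < \eta$ for all $n \ge n_0$, which gives the claim.

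There is no real obstacle here: the argument is the textbook Banach–Steinhaus style approximation via a dense subspace, and the only point that deserves attention is ensuring $C^\infty_c(\R^n)$ is dense in $L^p(\R^n)$, which fails only for $p = \infty$ and is excluded by assumption. No further regularity or structural properties of the operators $T_n$ are needed beyond linearity, uniform boundedness, and pointwise convergence on the dense subspace.
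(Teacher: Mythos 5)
Your proof is correct, and it is exactly the standard density-plus-uniform-boundedness argument that the paper itself invokes: the paper states this lemma as a "standard result from functional analysis" and gives no proof of its own. Your only substantive ingredients—density of $C^\infty_c(\R^n)$ in $L^p(\R^n)$ for $p<\infty$, the uniform bound $M$, and the triangle inequality—are precisely what is needed, so there is nothing to add.
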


\section{Mean curvature for Lipschitz metrics}\label{sec:meancurvature}

Regarding the `initial condition' of Hawking's singularity theorem we need to define what it means to bound the mean curvature of a spacelike hypersurface $\Sigma$. To do so we shall use a `thickening' of the Lebesgue zero set $\Sigma$, to properly extend the classical notion, as follows.

Let $(M,g)$ be a spacetime with a Lipschitz metric tensor $g$ and let $X\in\mathfrak{X}(M)$ be smooth and timelike. 
Let $\Phi:\mathcal O\rightarrow M$ be the \textit{flow-out} of $\Sigma$ along $X$ (cf.\ e.g.\ \cite[Thm. 9.20]{Lee:00}), where $\mathcal O\subseteq \Sigma\times\R$ is open.
We say that an open set $\mathcal{A}_\Sigma\subseteq  \mathcal{O}$ is \textit{admissible} if $\mathcal{A}_\Sigma$ is an open neighbourhood of $\Sigma\times\{0\}$, $\Phi|_{\mathcal{A}_\Sigma}$ is a diffeomorphism onto its image and the induced metric on each 
hypersurface $\Sigma_t = \Phi((\Sigma \times \{t\}) \cap \mathcal{A}_\Sigma)$ is Riemannian.  
We then also call $A_\Sigma=\Phi(\mathcal A_\Sigma)$ admissible and note that such sets exist by the flow-out theorem
since $\Sigma$ is spacelike.

In what follows we 
denote by $N$ the unique future directed unit vector field which is normal to the leaves $\Sigma_t$ and note that 
$N$ is locally Lipschitz.
We call a vector field $Y\in \mathfrak{X}(A_\Sigma)$ \emph{tangent} to $A_\Sigma$ and write $Y\in\mathfrak X(A_\Sigma)^\top$ if $Y(p)\in T_p\Sigma_t$ for any
$p\in \Sigma_t$ for all $t$. 

With these conventions, given a smooth timelike vector field $X$  we now define the \emph{$X$-slab mean curvature} $\mathcal{H}^X_{\mathcal A_\Sigma}[g]\in L^\infty_\loc( A_\Sigma)$ of $\Sigma$ on an admissible open set $A_\Sigma$ as 
\begin{align}
    \mathfrak{X}(A_\Sigma)^\top \times \mathfrak{X}(A_\Sigma)^\top \ni (Y,Z) \mapsto -\mathrm{tr}_g\left(g(\nabla_YZ,N)\right).
\end{align}
To be precise, by $\mathrm{tr}_g$ we mean here the $(n-1)$-dimensional metric trace along the spacelike slices of $A_\Sigma$. Thus if $X_1,\dots,X_{n-1}$ form a local frame of tangential vector fields and $G^{ij}$ denotes the
inverse of the matrix $(g(X_i,X_j))_{i,j=1}^{n-1}$, then on the common domain of the $X_i$'s we have
\begin{align}\label{Eq:meanslab}
\mathcal{H}^X_{\mathcal A_\Sigma}[g] = -\sum_{i,j=1}^{n-1} G^{ij} g(\nabla_{X_i}X_j,N).
\end{align}
We will then say that the $X$-slab mean curvature of $\Sigma$ is bounded above by $b \in \R$ if there exists an admissible set $\mathcal A_\Sigma$ such that \eqref{Eq:meanslab} is locally essentially bounded, i.e.
\begin{align}\label{Eq: X-slab bounds}
\esssup \mathcal{H}_{\mathcal A_\Sigma}^X[g] < b \qquad 
\text{ on } A_\Sigma.
\end{align}
In this case, we simply write  
$\mathcal{H}^X[g] < b$.
Note that the property of being admissible is closed under intersections. Also, if there is an $\mathcal A_\Sigma$ such that \eqref{Eq: X-slab bounds} holds, then for any other admissible open set $\mathcal B_\Sigma$  
the bound \eqref{Eq: X-slab bounds} also holds for $\mathcal B_\Sigma \cap \mathcal A_\Sigma$.
We now show that this notion of mean curvature bounds is independent of the choice of $X$.
\begin{lem}[Independence of $X$]\label{Lem: X indep}
Let $(M,g)$ be a spacetime with a locally Lipschitz metric $g$ and let $\Sigma \subseteq  M$ be a smooth spacelike hypersurface. Assume that $\mathcal{H}^X[g]<b$ for some timelike $X\in\mathfrak{X}(M)$. Then given any other timelike $Y\in\mathfrak{X}(M)$ it also holds that $\mathcal{H}^Y[g]<b$. 
\end{lem}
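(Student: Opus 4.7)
The guiding observation is that, pointwise in the a.e.\ sense, $\mathcal{H}^X_{\mathcal A_\Sigma}[g](p)$ coincides with the mean curvature of the $X$-leaf $\Sigma_{t(p)}$ through $p$, regarded as a spacelike hypersurface of the ambient $(M,g)$; hence $\mathcal{H}^X[g]$ depends on $X$ only through the foliation it induces, and analogously for $\mathcal{H}^Y[g]$. Setting $\delta := b-\esssup_{A^X_\Sigma}\mathcal{H}^X_{\mathcal A_\Sigma}[g]>0$, the task thus reduces to controlling the mean curvatures of the $Y$-leaves in terms of those of the $X$-leaves, exploiting that both foliations collapse to $\Sigma$ as their flow parameter tends to zero.

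Working locally, pick $p_0\in\Sigma$, a relatively compact chart neighbourhood $U\subseteq\Sigma$ of $p_0$, and use $\Phi^X$ to set up coordinates $(x,t)$ on a tube $\Phi^X(U\times(-\epsilon_0,\epsilon_0))\subseteq A^X_\Sigma$, in which the $X$-leaves become the slices $\{t=c\}$. Since $\Phi^Y$ is smooth in $(x,s)$ and $\Phi^Y(\cdot,0)=\mathrm{id}_\Sigma$, for small $s$ the $Y$-leaf appears in these coordinates as the graph of a smooth function $\phi_s$ on (a neighbourhood of) $U$ with $\phi_0\equiv 0$, hence $\|\phi_s\|_{C^2(U)}\le C_U|s|$ with $C_U$ depending only on $Y$ and $U$. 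A direct expansion of \eqref{Eq:meanslab}, applied to the graph basis $T_i=\partial_{x^i}+\phi_{s,i}\partial_t$ and the corresponding $g$-unit future normal $N$, then yields the pointwise identity
\begin{equation*}
\mathcal{H}^Y_{\mathcal A^Y_\Sigma}[g](x,\phi_s(x))\;=\;\mathcal{H}^X_{\mathcal A_\Sigma}[g](x,\phi_s(x))+R_s(x),
\end{equation*}
where each term of the remainder $R_s$ is a product of a smooth factor of size $O(|s|)$ (the second derivatives $\phi_{s,ij}$, the first derivatives $\phi_{s,i}$, and the difference $N-N_0$ with $N_0$ the unit normal to the slice $\{t=\phi_s(x)\}$) with a factor that is at worst $L^\infty$-bounded (the Christoffel symbols of $g$) or Lipschitz ($g$, $g^{-1}$, $N_0$) on the $X$-slab. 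Consequently $\|R_s\|_{L^\infty(U)}\le C\,C_U\,|s|$ with $C$ depending only on $\|\Gamma[g]\|_{L^\infty(A^X_\Sigma)}$ and the $C^0$-bounds on $g$ and $g^{-1}$ there.

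Choosing $\epsilon_{p_0}\in(0,\epsilon_0)$ so that $CC_U\epsilon_{p_0}<\delta/2$ then yields $\mathcal{H}^Y[g]<b-\delta/2<b$ a.e.\ on the local slab $\Phi^Y(U\times(-\epsilon_{p_0},\epsilon_{p_0}))$; a standard paracompactness/shrinking argument along $\Sigma$ next produces a continuous positive function $\epsilon\colon\Sigma\to\R_+$ for which the set $\{(x,s):|s|<\epsilon(x)\}$ in the domain of $\Phi^Y$ is admissible (diffeomorphism and spacelikeness of leaves being local, and holding at $s=0$) and on which the essential bound $\mathcal{H}^Y[g]<b$ holds globally. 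The principal technical obstacle is the $L^\infty$-smallness---as opposed to mere $L^p$-smallness---of $R_s$: the Christoffels of a Lipschitz $g$ are themselves only $L^\infty$ and need not be small, so the estimate closes only because those $L^\infty$-bounded factors always appear multiplied by smooth quantities that vanish as $s\to 0$, a feature furnished by the smoothness of the $Y$-flow and the initial condition $\phi_0\equiv 0$.
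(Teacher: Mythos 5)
Your proposal is correct and follows essentially the same route as the paper: both proofs rest on the observation that the only non-continuous ingredients (the $L^\infty$ Christoffel symbols of the Lipschitz metric) enter the difference $\mathcal{H}^Y[g]-\mathcal{H}^X[g]$ multiplied by smooth factors that vanish as the flow parameter tends to $0$, since the two foliations, their frames and their unit normals all converge to the same data on $\Sigma$; this yields an $L^\infty$-smallness estimate near $\Sigma$, which is then globalised along $\Sigma$ by a compact exhaustion/shrinking argument. The only difference is presentational: you realise the comparison by writing the $Y$-leaves as graphs over the $X$-slices in flow-adapted coordinates, whereas the paper pushes a fixed frame on $\Sigma$ forward under both flows and compares the resulting expressions directly.
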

\begin{proof} 
We fix a local frame $V_1,\dots,V_{n-1}\in\mathfrak{X}(\Sigma)$ and define $X_1,\dots,X_{n-1} \in \mathfrak{X}(A_\Sigma)^\top$ and $Y_1,\dots,Y_{n-1}\in\mathfrak{X}(B_\Sigma)^\top$ by the push-forward of 
this frame under the diffeomorphisms $\Phi^X$ and $\Phi^Y$, i.e.,
\begin{align}
    X_{i} \circ \Phi^X(z, t) :=T_{(z,t)}\Phi^X\left(V_i(z), 0 \right),\qquad Y_{i}\circ \Phi^Y(z, t):=T_{(z,t)}\Phi^Y\left(V_i(z),0 \right).
\end{align}
Then $(X_i)_t, (Y_i)_t\rightarrow V_{i}$ in $C^\infty$ as $t\rightarrow0$.
Denoting by $N^X$ and $N^Y$ the corresponding future directed unit normal fields, we note that $(N^X)_t \in\mathfrak{X}(\Sigma^X_t)^\perp$ and $(N^Y)_t \in\mathfrak{X}(\Sigma^Y_t)^\perp$ converge locally uniformly to the future-directed unit normal of $\Sigma$  as $t\rightarrow0$.

Let $\Sigma$ be compact for the moment. From the assumption $\mathcal{H}^X[g]<b$ in terms of the essential supremum there exists $\eta>0$ such that $\smash{\mathcal{H}_{\mathcal A_\Sigma}^X[g]<b-\eta}$ a.e.\ for some admissible $\mathcal A_\Sigma$. 
By the above convergence properties and 
the almost everywhere local boundedness of $\Gamma^i_{jk}[g]$, 
there exists an open neighbourhood $\smash{Z_\Sigma\subseteq \Phi^X(\mathcal A_\Sigma)\cap\Phi^Y(\mathcal B_\Sigma)}$ containing $\Sigma$ such that
\begin{align}
    \smash{\left\Vert\mathcal{H}_{\mathcal A_\Sigma}^X[g]-\mathcal{H}_{\mathcal B_\Sigma}^Y[g]\right\Vert_{L^\infty_\loc(Z_\Sigma)}<\frac{\eta}{2}.}
\end{align}
Consequently,
\begin{align}   
    \smash{\mathcal{H}_{\mathcal B_\Sigma}^Y[g]<
    b-\frac{\eta}{2}\qquad\mbox{a.e.\ on } Z_\Sigma}.
\end{align}
We construct an admissible open set from $Z_\Sigma$ as $\mathcal U_\Sigma:=
(\Phi^Y)^{-1}(Z_\Sigma)$. Since $\mathcal{H}^Y_{\mathcal{B}_\Sigma}[g]$ and $\smash{\mathcal{H}^Y_{\mathcal{U}_\Sigma}[g]}$ coincide on $U_\Sigma \subseteq  Z_\Sigma$, we have that $\smash{\mathcal{H}^Y_{\mathcal U_\Sigma}[g]<b -\frac{\eta}{2}}$ a.e.\ on $U_\Sigma$ and thereby $\mathcal{H}^Y <b$ in the sense of \eqref{Eq: X-slab bounds}. \medskip

For the general case consider an exhaustion of $\Sigma$ by compact sets $\{K_i\}_{i\in\N}$. Notice that on each $K_i$, there exists an $\varepsilon_i>0$ such that $\Phi^X(K_i \times (-\varepsilon_i, \varepsilon_i))$ and $\Phi^Y(K_i \times (-\varepsilon_i, \varepsilon_i))$ are contained in $\Phi^X(\mathcal{A}_\Sigma)$ and $\Phi^Y(\mathcal{B}_\Sigma)$, respectively. By the preceding argument, on each $K_i$ it holds that $\mathcal{H}^Y_{\mathcal{B}_\Sigma}[g]<b-\frac{\eta}{2}$ almost everywhere on an open subset $Z^i_\Sigma \subseteq  M$ containing $K_i$. Hence we can choose a smooth function $\delta:\Sigma\rightarrow\R^+$ such that, for each $i$, $\smash{\delta\vert_{K_i}<\varepsilon_i}$ 
and $\smash{\Phi^Y_\delta(K_i)}$ is contained in $Z^i_\Sigma$, where $\Phi^Y_\delta:= \Phi^Y(\cdot, \delta(\cdot))$. Then 
on the admissible open set $\smash{\mathcal{C}_\Sigma:={(\Phi^Y)}^{-1}(\bigcup_{i=1}^\infty \Phi^Y_\delta(K_i))}$ we obtain 
\begin{align}   
\mathcal H^Y_{\mathcal C_\Sigma}[g] <b -\frac{\eta}{2}\qquad \text{a.e.\ on } C_\Sigma,
\end{align}
which shows that $\mathcal{H}^Y[g]<b$, as claimed.
\end{proof}

Thanks to \Cref{Lem: X indep} we can now define \emph{bounds for the slab mean curvature of $\Sigma$} independent of the choice of a timelike vector field
$X\in\mathfrak{X}(M)$.

\begin{defi}[Mean curvature bounds]\label{De:mean curvature bounds}
    Let $(M, g)$ be a spacetime with Lipschitz metric $g$. Fix a constant $b\in\R$ and let $\Sigma$ be a smooth spacelike hypersurface. We say that the slab mean curvature of $\Sigma$ is bounded above by $b$ and write
    \begin{align*}
        \mathcal{H}[g]<b
    \end{align*}
    if there exists a timelike $X\in\mathfrak{X}(M)$ such that $\mathcal{H}^X[g]<b$ in the sense of \eqref{Eq: X-slab bounds}.
    \end{defi}

To justify the definition above, we next show that bounds for the slab mean curvature are equivalent to bounds for the usual mean curvature \emph{if $g$ is smooth}.
\begin{lem}[Equivalence of mean curvature bounds for $g$ smooth]\label{lemma:eq_meancurvature}
    Let $(M, g)$ be a smooth spacetime. Let $\Sigma$ be a smooth spacelike hypersurface with future-directed unit normal vector field $\vec{n}$, and let $H[g]$ be the mean curvature of $\Sigma$ associated with $\vec{n}$. For  
    any $b\in\R$, the following statements are equivalent:
    \begin{enumerate}
        \item $H[g] < b $ on $\Sigma$.
        \item $\mathcal H[g]<b$.
    \end{enumerate}
\end{lem}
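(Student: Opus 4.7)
The core observation that drives everything is the following identification: for smooth $g$, at every point $p\in \Sigma=\Sigma_0$ the slab mean curvature $\mathcal H^X_{\mathcal A_\Sigma}[g](p)$ literally equals the classical $H[g](p)$. Indeed, on $\Sigma$ the slab-normal $N$ agrees with $\vec{n}$ by uniqueness of the future-directed unit normal, the frame $\{X_i\}$ restricts to a local frame of $T\Sigma$, the matrix $(g(X_i,X_j))$ reduces to the induced Riemannian metric on $\Sigma$, and the Levi-Civita connection is the same operator acting on the same fields; so the local formula \eqref{Eq:meanslab} becomes the standard trace of the Weingarten map. Moreover, since $g$ is smooth the flow $\Phi^X$, the field $N$ and hence the function $\mathcal H^X_{\mathcal A_\Sigma}[g]$ are smooth on $A_\Sigma$, so essential suprema coincide with suprema on any open subset of $A_\Sigma$. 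Establishing this identification and smoothness is the first, purely book-keeping step.

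For (ii)$\Rightarrow$(i) the proof is then immediate: from $\esssup_{A_\Sigma}\mathcal H^X_{\mathcal A_\Sigma}[g]<b$ together with continuity of $\mathcal H^X_{\mathcal A_\Sigma}[g]$ one obtains $\mathcal H^X_{\mathcal A_\Sigma}[g](p)<b$ for every $p\in A_\Sigma$, and restriction to $\Sigma\subseteq A_\Sigma$ together with the identification above yields $H[g]<b$ on $\Sigma$.

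For (i)$\Rightarrow$(ii), I would take any smooth timelike $X\in\mathfrak X(M)$ (e.g.\ the time-orientation field) and any admissible $\mathcal A_\Sigma$ produced by the flow-out theorem, so that $\mathcal H^X_{\mathcal A_\Sigma}[g]|_\Sigma=H[g]<b$. When $\Sigma$ is compact, $\max_\Sigma H[g]<b-\eta$ for some $\eta>0$, and by continuity of $\mathcal H^X_{\mathcal A_\Sigma}[g]$ there is a shrunk admissible open neighbourhood $\mathcal A'_\Sigma\subseteq\mathcal A_\Sigma$ of $\Sigma$ with $\mathcal H^X_{\mathcal A'_\Sigma}[g]<b-\eta/2$ everywhere, which gives \eqref{Eq: X-slab bounds}. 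For general $\Sigma$, I would run the exhaustion argument verbatim from the last paragraph of the proof of \Cref{Lem: X indep}: exhaust $\Sigma$ by compact sets $K_i$, find slab thicknesses $\varepsilon_i>0$ and gaps $\eta_i>0$ with $\mathcal H^X_{\mathcal A_\Sigma}[g]<b-\eta_i/2$ on $\Phi^X(K_i\times(-\varepsilon_i,\varepsilon_i))$, choose a smooth $\delta:\Sigma\to\R^+$ with $\delta|_{K_i}<\varepsilon_i$, and take $\mathcal C_\Sigma:=(\Phi^X)^{-1}\bigl(\bigcup_i\Phi^X_\delta(K_i)\bigr)$ as the admissible set realising $\mathcal H^X[g]<b$.

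The main obstacle is the careful treatment of the strict inequality in the essential-supremum condition in the non-compact case: the gaps $\eta_i=b-\max_{K_i}H[g]$ may degenerate as $i\to\infty$, and this is precisely the point at which one cannot use a uniform product tube but needs the graph-of-$\delta$ construction from \Cref{Lem: X indep} to thin the slab just enough at each end of $\Sigma$ while retaining admissibility. Beyond that single delicate book-keeping issue, all steps are routine since the smoothness of $g$ reduces the analysis to standard differential-geometric considerations.
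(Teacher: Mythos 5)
Your identification step, the direction (ii)$\Rightarrow$(i), and the compact case of (i)$\Rightarrow$(ii) are correct and coincide with the paper's (very short) argument: since $g$ is smooth, $N^X$ and hence $\mathcal H^X_{\mathcal A_\Sigma}[g]$ are smooth, $N|_\Sigma=\vec n$ so the slab mean curvature restricts to $H[g]$ on $\Sigma$, essential suprema of continuous functions on open sets are suprema, and for compact $\Sigma$ continuity yields the strict bound on a shrunken admissible neighbourhood. Up to that point you are following the same route as the paper.

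The non-compact part of your (i)$\Rightarrow$(ii) argument, however, contains a genuine gap, and you have misdiagnosed what the graph-of-$\delta$ device in \Cref{Lem: X indep} can do. In that proof the single gap $\eta>0$ is supplied by the \emph{hypothesis} (the essential-supremum bound \eqref{Eq: X-slab bounds} on $A_\Sigma$), and the function $\delta$ only compensates for the shrinking transversal thicknesses $\varepsilon_i$; it never has to repair degenerating gaps. Thinning the slab is a purely transverse operation: it does not change the values of the slab mean curvature on $\Sigma$ itself, and since this function is continuous and equals $H[g]$ on $\Sigma$, every admissible neighbourhood satisfies $\esssup \mathcal H^X_{\mathcal A_\Sigma}[g]\ \geq\ \sup_\Sigma H[g]$, for \emph{any} timelike $X$. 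Hence if $\Sigma$ is non-compact and $H[g]<b$ pointwise but $\sup_\Sigma H[g]=b$, your gaps $\eta_i$ necessarily degenerate and the set $\mathcal C_\Sigma$ you construct only gives $\mathcal H^X_{\mathcal C_\Sigma}[g]<b$ pointwise, not the strict essential-supremum bound \eqref{Eq: X-slab bounds}; in fact no construction can give it in that situation. So (i)$\Rightarrow$(ii) really requires either compactness of $\Sigma$ or the uniform reading $\sup_\Sigma H[g]<b$ of (i); this is also how the paper's one-line ``follows by continuity'' has to be understood (and is all that is needed in its applications, which work over relatively compact $\tilde\Sigma$ as in \Cref{Le: ConvMeanCurv}). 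You should either impose such a uniform gap (or compactness) in your last step, or delete the claim that the exhaustion argument of \Cref{Lem: X indep} settles the general case.
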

\begin{proof} Note that for $N$ as defined above we have $N|_\Sigma = \vec{n}$.
The implication (ii) $\Rightarrow$ (i) is immediate since $\mathcal{H}^X_{\mathcal{A}_\Sigma}$ and $H$ agree on $\Sigma$. Conversely, (i) $\Rightarrow$ (ii) follows by continuity since for any timelike $X\in\mathfrak X(M)$, the trace $-\smash{\mathrm{tr}_g g\left(\nabla_{\,.\,}\,.\,,N^X\right)}$ is smooth and restricts to $H[g]$ on $\Sigma$.
\end{proof}
The following final result of this section will allow us to infer mean curvature bounds on regularisations of $g$ from those on $g$ in the sense of the previous definition.

\begin{lem}[Local convergence of the mean curvature]\label{Le: ConvMeanCurv}
    Let $(M,g)$ be a spacetime with a locally Lipschitz metric $g$ and let $\Sigma$ be a smooth spacelike hypersurface.
   Let $\tilde\Sigma$ be open and relatively compact in $\Sigma$ and let $\smash{g_k:= \check g_{\eps_k}}$ be as in \Cref{Le:approximating metrics} (iv). Finally, let $X\in \mathfrak{X}(M)$ be $g$-timelike.
    Then there exists some $k_0\in \N$ and an open neighbourhood $A_{\tilde\Sigma}$ of $\tilde\Sigma$ in $M$ that is admissible with respect to $X$ for $g$ 
    and for each $g_k$ with $k\ge k_0$. Moreover, 
    \begin{align}\label{Eq:mean curv conv}
        \smash{\big\Vert\mathcal{H}^X[g_k] -\mathcal{H}^X[g] \star_M \rho_{\varepsilon_k}\big\Vert_{L^\infty(A_{\tilde\Sigma})}\rightarrow0\quad\text{ as } k \to \infty.}
    \end{align}
In particular, if $\mathcal{H}[g]<b$ for some $b\in \R$, then the $g_k$-mean curvature of $\tilde\Sigma$ satisfies $H_{\tilde\Sigma}[g_k]<b$ for $k$ large.
\end{lem}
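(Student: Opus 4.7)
My plan is to prove the lemma in three steps: construct a common $X$-admissible slab that works for $g$ and all large $k$; establish the $L^\infty$-convergence \eqref{Eq:mean curv conv}; and conclude the pointwise mean curvature bound on $\tilde\Sigma$. For the first step, compactness of $\overline{\tilde\Sigma}$ together with the flow-out theorem yields a uniform $\delta>0$ such that $\Phi^X$ restricts to a diffeomorphism on $\mathcal{A}_{\tilde\Sigma}:=\tilde\Sigma\times(-\delta,\delta)$, with $g$-spacelike slices $\Sigma_t$ and relatively compact image $A_{\tilde\Sigma}\Subset M$. By \Cref{Le:approximating metrics}(ii), $g_k\to g$ uniformly on $\overline{A_{\tilde\Sigma}}$, so both the $g$-timelikeness of $X$ and the $g$-spacelikeness of each $\Sigma_t$—strict open conditions—persist for $g_k$ whenever $k\geq k_0$. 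Hence $A_{\tilde\Sigma}$ is $X$-admissible for $g$ and each such $g_k$.

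The core of the argument is \eqref{Eq:mean curv conv}. First I would use \Cref{Le:approximating metrics}(iii), which gives $\mathcal{H}^X[\check g_{\eps_k}]-\mathcal{H}^X[g_{\eps_k}]\to 0$ locally uniformly, to reduce matters to bounding $\mathcal{H}^X[g_{\eps_k}]-\mathcal{H}^X[g]\star_M\rho_{\eps_k}$ in $L^\infty$. Working in a chart, \eqref{Eq:meanslab} takes the schematic form $\mathcal{H}^X[g]=A(g)+B(g)\cdot\partial g$, where $A$ and $B$ are smooth functions of $g$ (assembled from $G^{ij}[g]$, the unit normal $N^g$ and Christoffel symbols); hence $A(g)$ and $B(g)$ are locally Lipschitz. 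The zeroth-order piece $A(g_{\eps_k})-A(g)\star_M\rho_{\eps_k}$ is $O(\eps_k)$ in $L^\infty$ since both terms lie within $O(\eps_k)$ of $A(g)$. For the first-order piece I split
\begin{equation*}
B(g_{\eps_k})\,\partial g_{\eps_k} - (B(g)\,\partial g)\star_M\rho_{\eps_k} = \bigl[B(g_{\eps_k})-B(g)\bigr]\,(\partial g\star_M\rho_{\eps_k}) + \bigl[B(g)\,(\partial g\star_M\rho_{\eps_k}) - (B(g)\,\partial g)\star_M\rho_{\eps_k}\bigr],
\end{equation*}
the discrepancy between $\partial g_{\eps_k}$ and $(\partial g)\star_M\rho_{\eps_k}$ being a $C^\infty_{\loc}$-small correction from the partition-of-unity structure of $\star_M$. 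The first bracket is $O(\eps_k)$ in $L^\infty$ because $B$ is Lipschitz in $g$ and $(\partial g)\star_M\rho_{\eps_k}$ is uniformly $L^\infty$-bounded. The second is the Friedrichs commutator $\int(B(g(x))-B(g(y)))\,\partial g(y)\,\rho_{\eps_k}(x-y)\,dy$, which the support condition $|x-y|\leq\eps_k$ together with $\partial g\in L^\infty$ control by $\mathrm{Lip}(B(g))\,\eps_k\,\|\partial g\|_{L^\infty}$. Summing, $\|\mathcal{H}^X[g_{\eps_k}]-\mathcal{H}^X[g]\star_M\rho_{\eps_k}\|_{L^\infty(A_{\tilde\Sigma})}=O(\eps_k)$.

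For the final statement, assuming $\mathcal{H}[g]<b$, by \Cref{Lem: X indep} we may suppose $\mathcal{H}^X[g]<b$, so $\mathcal{H}^X_{\mathcal{A}_\Sigma}[g]\leq b-\eta$ a.e.\ on some admissible $A_\Sigma$ with $\eta>0$. Shrinking $\delta$ so that $\overline{A_{\tilde\Sigma}}\subset A_\Sigma$, non-negativity of $\rho$ then gives $\mathcal{H}^X[g]\star_M\rho_{\eps_k}\leq b-\eta$ on $A_{\tilde\Sigma}$ for $k$ large; together with \eqref{Eq:mean curv conv} this yields $\mathcal{H}^X[g_k]<b$ pointwise there (by smoothness of $g_k$), and restricting to $\tilde\Sigma$ gives $H_{\tilde\Sigma}[g_k]<b$ via \Cref{lemma:eq_meancurvature}. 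The hard part is the $L^\infty$-convergence \eqref{Eq:mean curv conv}: unlike the Ricci analogue \Cref{Pr:Ricci Lp-convergence}, which involves second derivatives and achieves only $L^p$ convergence for $p<\infty$, the first-order nature of the mean curvature and the Lipschitz regularity of its coefficients in $g$ promote the standard Friedrichs commutator to an $L^\infty$-bound of order $\eps_k$.
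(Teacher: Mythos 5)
Your proposal is correct and follows essentially the same route as the paper: a common admissible neighbourhood via uniform convergence $g_k\to g$, the splitting of $\mathcal{H}^X[g_k]-\mathcal{H}^X[g]\star_M\rho_{\eps_k}$ into the $C^\infty_{\loc}$-small piece $\mathcal{H}^X[g_k]-\mathcal{H}^X[g_{\eps_k}]$ (Lemma \ref{Le:approximating metrics}(iii)) plus a first-order Friedrichs-type commutator with Lipschitz coefficient and $L^\infty$ factor, and the final transfer of the bound via Lemmas \ref{Lem: X indep} and \ref{lemma:eq_meancurvature}. Your explicit $O(\eps_k)$ bookkeeping is just a quantitative version of the paper's appeal to the zero-order commutator estimate (cf.\ \eqref{eq:0order} and \cite[Lem.\ 3.2]{KSSV:15}), so there is no substantive difference.
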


\begin{proof}
Let $A_\Sigma$ be admissible for $\Sigma$, $X$ and $g$ and choose an open and relatively compact neighbourhood $A_{\tilde\Sigma} \subseteq A_\Sigma$ of $\tilde\Sigma$. Then since $g_k\to g$ uniformly on $A_{\tilde\Sigma}$, there exists $k_0$ such that the leaves $\tilde\Sigma_t$ are $g_k$-spacelike, and thereby $A_{\tilde\Sigma}$ is 
admissible for $\tilde\Sigma$, $X$ and $g_k$, for each $k\ge k_0$.

Dropping the subscript indicating the admissible open set for the $X$-slab mean curvature, we have
    \begin{align}\label{Eq:conv mean curv rewritten}
        \mathcal{H}^X[g_k]-\mathcal{H}^X[g]\star_M\rho_{\eps_k}=\left(\mathcal{H}^X[g_k]-\mathcal{H}^X[g_{\eps_k}]\right)+\left(\mathcal{H}^X[g_{\eps_k}]-\mathcal{H}^X[g]\star_M\rho_{\eps_k}\right).
    \end{align}
    Due to \Cref{Eq:meanslab} we can schematically write the local form of the highest order terms of $\mathcal{H}^X[g]$ in the form
    \begin{align}\label{Eq:meanschematic}
        \mathcal{H}^X[g]\sim g(\partial g)g^{-1}\eta_i \xi_j\xi_k\sim g(\partial g),
    \end{align}   
    where $\eta_i$ represents local components of $N^X$ and  $\xi_j,\xi_k$ stand for (local derivatives of) $X_i$. Here, 
    $g^{-1}$ and $\eta_i$ are  Lipschitz and the $\xi_j,\xi_k$ are smooth, hence can be notationally suppressed.
    Combining \eqref{Eq:conv mean curv rewritten} and \eqref{Eq:meanschematic} we see that \Cref{Le:approximating metrics} directly gives  convergence (even in $C^\infty_{\mathrm{loc}}(M)$) for the first term on the right-hand side of \eqref{Eq:conv mean curv rewritten}. For the second one, notice that (in a local chart)
    \begin{align*}
        g_\eps(\partial g_\eps)-\big(g(\partial g)\big)*\reps=(g*\reps)\big((\partial g)*\reps\big)-\big(g(\partial g)\big)*\reps
    \end{align*}
    has the same form as \cite[Eq.\ (6) of Lemma 3.2]{KSSV:15} since $g$ is Lipschitz and hence $\partial g\in L^\infty_{\mathrm{loc}}(M)$, so it goes to $0$ locally uniformly, as claimed.

    The final claim now follows from Lemmas \ref{Lem: X indep} and \ref{lemma:eq_meancurvature}.
\end{proof}

\begin{rem}[Comparison with synthetic notions of mean curvature]\label{Rem: mean curv comp}
    In the past few  
    years various synthetic definitions of mean curvature and mean curvature bounds have been proposed,
    based on the \emph{needle decomposition}, a tool used to localize (timelike) lower Ricci curvature bounds in the synthetic setting \cite{Ket:20, 
    CM:20, BMcC:23, Ket:23, Bra:24}.
    This notion of localization stems 
    from convex geometry and serves the purpose of 
    reducing 
    a multi-dimensional problem to a one-dimensional one, in order to derive functional and geometric inequalities, in particular 
    lower Ricci curvature bounds  \cite{K:17, CFMcC:02, CM:17II, CM:20}.
    
    In summary, this construction decomposes the ambient measured Lorentzian pre-length space $(X,\textsf{d},\leq,\ll,\tau,\m)$ into maximisers  $X_\alpha$, where $\alpha$ is an index that can be identified with points in the Cauchy hypersurface $\Sigma$, up to a negligible set. Recall that in the Lipschitz case maximisers can be parametrised to be $C^{1,1}$-geodesics in the Filippov sense, see Remark \ref{rem:geomax}(ii).
    The so-called \emph{needles} $X_\alpha$ are then viewed as one-dimensional metric measure spaces $(X_\alpha,\textsf{d}_\alpha,\m_\alpha)$ that inherit the curvature properties of the ambient space, cf. \cite[Thm. 4.17]{CM:20} and \cite[Thms. 6.37 and A.5]{BMcC:23}. As such, these curvature properties yield the absolute continuity of $\m_\alpha=h_\alpha\mathcal{L}^1\vert_{[0,L_\tau(X_\alpha)]}$ \cite[Thm. A.2]{CM:21}. In the smooth context it follows directly from the definitions that
    \begin{align}\label{Eq: halpha}
        h_\alpha(t)=\psi\cdot\det D\Phi_{(\alpha,t)}\vert_{T_\alpha\Sigma}
    \end{align}
    where $\Phi(\alpha,t)=\exp_\alpha(-t\nabla\tau_\Sigma(\alpha))$ is the normal exponential map and $\psi$ is a renormalization factor \cite[Rem. 5.4]{CM:20}.    

   Synthetic mean curvature bounds are then defined via a second-order Taylor expansion of the volume of the region that in the smooth case is spanned by the evolution of $\Sigma$ under the map $\Phi$, hence involves $h_\alpha$ directly (\cite[Def. 5.2]{CM:20}). In particular, just like the notion put forth in \Cref{De:mean curvature bounds},  mean curvature bounds in this synthetic sense reproduce the classical bounds in the smooth case (\cite[Rem. 5.4]{CM:20}).  
   To compare these notions for Lipschitz Lorentzian metrics
   it is necessary to study what the right-hand side of \eqref{Eq: halpha} means in this regularity, where the exponential map is no longer available.
    In addition, it requires a study of the needle decomposition in the context of low-regularity metrics,
    which is deferred to future work.
    \end{rem}

\section{Hawking's singularity theorem for Lipschitz metrics}\label{sec:main}

In this section we formulate and prove our main results. As usual we give two versions of the Hawking theorem, the first one providing a global bound on $\tau_\Sigma(p) = \sup_{q\in \Sigma}\tau(q,p)$ (cf.\ \eqref{eq:subset time sep}) for a Cauchy surface $\Sigma$, while the second one avoids global hyperbolicity.

\subsection{The globally hyperbolic case}
In this section we will prove the following general version of Hawking's singularity theorem:

\begin{thm}[$C^{0,1}$-Hawking singularity theorem, I]\label{Th: Hawking gh case}
   Let $(M,g)$ be a  spacetime with a locally Lipschitz metric $g$ such that:
    \begin{itemize}
        \label{Th: Hawking gh case (i)}
        \item[(i)] There exists  
        $\rho\in \R$
        such that $\Ric_g(X,X)\geq -(n-1)\rho\, g(X,X)$ in the distributional sense for all timelike $X\in\mathfrak{X}(M)$.
        \label{Th: Hawking gh case (ii)}
        \item[(ii)] There is a smooth spacelike Cauchy hypersurface $\Sigma$ with  $\mathcal{H}[g]<\beta<0$.
    \end{itemize}
    Then we have     
    \begin{equation}\label{eq:Hawking-bounds-cases}
      \sup_M\tau_\Sigma\leq  \begin{cases}
        \frac{n-1}{\abs{\beta}} &\text{if} \;\rho=0\\
          \frac{1}{\sqrt{|\rho|}}\coth^{-1}\Big(\frac{|\beta|}{(n-1)\sqrt{|\rho|}}\Big) &\text{if} \;\rho<0, \text{ provided } |\beta|>(n-1)\sqrt{|\rho|}, \\
         \frac{1}{\sqrt{\rho}}\cot^{-1}\Big(\frac{|\beta|}{(n-1)\sqrt{\rho}}\Big) &\text{if} \;\rho>0.
       \end{cases}
   \end{equation}
\end{thm}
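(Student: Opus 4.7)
The plan is to argue by contradiction using \Cref{thm:sat} applied to the smooth regularisations $g_k := \check g_{\eps_k}$ from \Cref{Le:approximating metrics}(iv). Suppose toward contradiction that $\sup_M \tau_\Sigma > T_0$, where $T_0$ denotes the right-hand side of \eqref{eq:Hawking-bounds-cases}. First I would pick $T > T_0$ still satisfying the structural constraints (so that $K(\beta, T, \rho) > 0$ and the case-specific hypotheses of \Cref{thm:sat} are met) together with a point $p \in M$ with $\tau_\Sigma(p) > T$. By Avez--Seifert for continuous globally hyperbolic spacetimes, I obtain a $g$-maximiser $\gamma^*: [0, L] \to M$ from some $q_0 \in \Sigma$ to $p$ of length $L = \tau_\Sigma(p) > T$.

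Next I would gather the hypotheses of \Cref{thm:sat} for each $g_k$. The mean curvature bound $\mathcal{H}[g] < \beta$ yields by \Cref{Le: ConvMeanCurv} that on any relatively compact $\tilde\Sigma \Subset \Sigma$ one has $H_{\tilde\Sigma}[g_k] < \beta'$ for some $\beta' \in (\beta, 0)$ once $k$ is large. For the (uniform) Ricci lower bound, I would pass the distributional hypothesis $(\Ric[g] + (n-1)\rho\, g)(X, X) \geq 0$ through mollification to get a pointwise non-negativity after convolution, then combine \Cref{Pr:Ricci Lp-convergence}(ii) (uniform $L^\infty$-bound on $\Ric[g_k] - \Ric[g] \star_M \rho_{\eps_k}$) with \Cref{Le:Box2 of L4.8} (to commute vector pairings with mollification modulo $L^p$-errors) to conclude: on any compact $K \Subset M$, there is $\kappa < 0$, uniform in $k$ and with $|\kappa|$ as large as needed to satisfy $|\beta'| \leq (n-1)\sqrt{|\kappa|}$, such that $\Ric[g_k](X, X) \geq n\kappa$ for all $g_k$-unit timelike $X$ on $K$.

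The core step would be producing a set $B \subseteq \Sigma$ of positive measure whose points admit long $g_k$-maximisers. Since $\gamma^*$ is strictly $g$-timelike and $g_k \to g$ locally uniformly, $\gamma^*$ is $g_k$-timelike for $k$ large with $L^{g_k}(\gamma^*) \to L$. Consequently $\tau_\Sigma^{g_k}(p) \geq L^{g_k}(\gamma^*) > T + 2\eta$ for some $\eta > 0$ and $k$ large. Applying the smooth Avez--Seifert theorem to the smooth globally hyperbolic region $(D^{g_k}(\Sigma)^\circ, g_k)$ (noting $\Sigma$ remains spacelike and Cauchy for $g_k$ in a neighbourhood by $g_k \prec g$), this $g_k$-time separation is realised by a $g_k$-unit-speed geodesic departing orthogonally from $\Sigma$ at some $q_k$. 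Upper semicontinuity of the $g_k$-cut function then produces an open $\Sigma$-neighbourhood $B \ni q_k$ with $B \subseteq \mathrm{Reg}_\eta^{+,g_k}(T)$ and $\sigma^{g_k}(B) > 0$ (bounded away from $0$ as $k \to \infty$).

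Finally, I would verify condition \eqref{eq:thm2.4}. The integrand $(\Ric[g_k](U^{g_k}, U^{g_k}) - (n-1)\rho)_-$ on $\Omega_T^{+,g_k}(B)$ should tend to $0$ in $L^1$: applying the distributional hypothesis to the smooth $g$-timelike field $U^{g_k}$ (which is $g$-timelike since $g_k \prec g$) yields $\bigl(\Ric[g](U^{g_k}, U^{g_k})\bigr) \star_M \rho_{\eps_k} \geq -(n-1)\rho\, \bigl(g(U^{g_k}, U^{g_k})\bigr) \star_M \rho_{\eps_k}$ pointwise, and combining with \Cref{Le:Box2 of L4.8}, \Cref{Pr:Ricci Lp-convergence}(i), the identity $g_k(U^{g_k}, U^{g_k}) = -1$, and uniform convergence $g \star_M \rho_{\eps_k} \to g$, one deduces $\Ric[g_k](U^{g_k}, U^{g_k}) \geq (n-1)\rho$ up to an $L^p$-error vanishing as $k \to \infty$. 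Since $\sigma^{g_k}(B)$ is bounded away from zero while $C^{A-}(n, \kappa, \eta, T) K(\beta', T, \rho)$ is a fixed positive constant, \eqref{eq:thm2.4} holds for $k$ large, and \Cref{thm:sat} yields $B \not\subseteq \mathrm{Reg}_\eta^{+,g_k}(T)$, contradicting the construction of $B$. The main obstacle I expect is precisely the construction of $B$: bridging a single long $g$-maximiser to an open set of long $g_k$-maximisers in $\Sigma$, uniformly in $k$, requires a careful stability argument for causality under $C^0$-perturbations of the metric, together with the delicate bookkeeping needed to pair the $k$-dependent congruence $U^{g_k}$ with the convolution estimates of Section~\ref{sec:regularisation}.
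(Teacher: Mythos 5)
Your overall architecture (regularise by $\check g_{\eps_k}$, transfer the mean curvature and Ricci bounds via \Cref{Le: ConvMeanCurv}, \Cref{Pr:Ricci Lp-convergence} and \Cref{Le:Box2 of L4.8}, and contradict \Cref{thm:sat}) matches the paper, but the step you yourself flag as the main obstacle is a genuine gap, and your proposed fix does not close it. You construct $B$ as an open neighbourhood in $\Sigma$ of the foot point $q_k$ of a single long $g_k$-maximiser, invoking semicontinuity of the $g_k$-cut function. This produces, at best, a neighbourhood that depends on $k$, with no control whatsoever on its size: as $k\to\infty$ these neighbourhoods may shrink, so $\sigma^{g_k}(B)$ may tend to $0$. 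Since the hypothesis \eqref{eq:thm2.4} of \Cref{thm:sat} involves the quotient by $\sigma(B)$, the $L^1$-smallness of $\bigl(\Ric[g_k]({}^kU,{}^kU)-(n-1)\rho\bigr)_-$ is useless without a $k$-uniform positive lower bound on $\sigma^{g_k}(B)$, and semicontinuity of the cut functions of the varying metrics $g_k$ gives no such uniformity (nor do the cut functions converge in any sense you can use here). This is exactly where the paper's proof takes a different, and essential, turn: it fixes an open target set $U$ with $\alpha+\delta<\tau_\Sigma<\alpha+2\delta$ on $\overline U$ (using the uniform convergence $\tau_{\Sigma,k}\to\tau_\Sigma$ of \Cref{Le:uniform convergence of tau}), defines $B_k$ as the set of \emph{all} foot points on $\Sigma$ of $g_k$-maximisers reaching points of $U$, and then bounds $\mathrm{vol}_{\Sigma,k}(B_k)$ from below by running the volume comparison of \cite[Lem.\ 3.2, Rem.\ 3.3]{GKOS:22} \emph{in reverse}: the future evolution ${}^k\Omega^+_{\alpha+2\delta}(B_k)$ covers $U$, whose $g_k$-volume is bounded below by $\tfrac12\mathrm{vol}(U)$, while the comparison estimate (valid thanks to the very same curvature bounds \eqref{eq:rbb}, \eqref{Eq:FINAL2meancurv} on the regularised metrics) bounds that evolution's volume above by $C\,(\alpha+2\delta)\,\mathrm{vol}_{\Sigma,k}(B_k)$. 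This yields the uniform bound \eqref{eq:vol-pos}, which is the missing quantitative ingredient in your argument.

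A secondary point: your final verification pairs the $k$-dependent congruence ${}^kU$ with the mollification estimates, but \Cref{Pr:Ricci Lp-convergence} and \Cref{Le:Box2 of L4.8} are statements for fixed (smooth, global) vector fields, and ${}^kU$ is neither fixed nor globally defined. The paper resolves this in \Cref{Lem:(iv) of our Hawking theorem} by freezing coefficients in a smooth frame at each point (the constant-coefficient fields $\Bar X_k$), which in turn requires that the ${}^kU$ be locally uniformly bounded and uniformly $g$-timelike; that uniformity is obtained from uniform $C^{1,1}$-bounds on the $g_k$-geodesics via \cite[Prop.\ 1.4]{LLS:21} on the compact set $L=J^-(\hat q)\cap J^+(\Sigma)$. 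You acknowledge this bookkeeping but do not supply it; unlike the construction of $B$, it is a repairable omission, since the paper's Lemma \ref{Lem:(iv) of our Hawking theorem} does exactly this job.
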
 

\begin{rem} \ 
\begin{itemize}
\item[(i)] The case $\rho=0$ in \eqref{eq:Hawking-bounds-cases} corresponds to the classical Hawking theorem (cf.\ \cite[Thm.\ 14.55A]{ONe:83}). Versions of the theorem  with $\rho\not=0$ have appeared e.g.\ in \cite{Bor:94,AG:02} and in low regularity in \cite{Gra:16,CM:20}.
\item[(ii)] Denoting the right hand side of \eqref{eq:Hawking-bounds-cases} by $\alpha(\beta,\rho)$ (cf.\ \eqref{eq:alpha-cases} below), $\alpha(\beta,\rho)$ is calculated such that $K(\beta,\alpha(\beta,\rho),\rho)=0$, see \eqref{eq:K-def}. The assumption on $\beta$ and $\rho$ in
the case $\rho<0$ ensures that the argument lies within the domain of $\coth^{-1}$.
\item[(iii)] In the case $\rho>0$ the expression on the right hand side of \eqref{eq:Hawking-bounds-cases} is always strictly smaller than $\frac{\pi}{\sqrt{\rho}}$ and in fact smaller than $\frac{\sqrt{\rho}}{2\pi}$. This is in accordance with the observations that (in the smooth setting), (a) lower timelike
Ricci curvature bounded below by $(n-1)\rho>0$ forces any unit speed timelike geodesic of length $\ge \frac{\pi}{\sqrt{\rho}}$ to contain conjugate points (cf.\ \cite[Lem.\ 10.23]{ONe:83}) and (b) any hypersurface with negative mean curvature (i.e. an initial focussing) develops conjugate points before $\frac{\sqrt{\rho}}{2 \pi}$. 
\end{itemize}    
\end{rem}

Throughout this section, we will generally assume---unless explicitly stated otherwise---that $(M,g)$ is globally hyperbolic and that $g$ is locally Lipschitz. To prove \Cref{Th: Hawking gh case} some preparations are required.

\begin{lem}[Existence of maximisers]\label{Le:properties of sigma}
    Let $\Sigma\subseteq M$ be a Cauchy hypersurface of a globally hyperbolic spacetime $(M,g)$ with a locally Lipschitz metric $g$. Then 
    \begin{enumerate}
        \item For any compact set $K\Subset J^+(\Sigma)$ the set $J^-(K)\cap J^+(\Sigma)$ is compact as well.
        \item  For any $q\in J^+(\Sigma)$ there exists a maximising causal curve $\gamma$ from $\Sigma$ to $q$. 
    \end{enumerate}
\end{lem}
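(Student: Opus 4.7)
My strategy is to combine closedness of the causal relations in the Lipschitz setting with a limit curve argument exploiting the Cauchy property of $\Sigma$. Since $g\in C^{0,1}$ is causally plain (\cite[Thm.\ 1.20]{CG:12}), the sets $J^\pm(\cdot)$ of compact sets are closed in the continuous globally hyperbolic setting of \cite{Sae:16}, and the usual limit curve theorems remain available via \cite{Sae:16} or through the closed cone structure framework of \cite{Min:19a}, compare \Cref{rem:cs} and \Cref{rem:lls}. Thus $J^-(K)\cap J^+(\Sigma)$ is closed as an intersection of closed sets, and I only need sequential compactness. Picking $(r_n)\subseteq J^-(K)\cap J^+(\Sigma)$, I choose $q_n\in K$ with $r_n\leq q_n$, and a past-directed causal curve from $q_n$ through $r_n$ which, being otherwise past-inextendible, must meet $\Sigma$ at some $p_n$. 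Compactness of $K$ yields $q_n\to q\in K$ along a subsequence, and the limit curve theorem then produces a past-directed causal limit curve $\alpha$ starting at $q$; Cauchyness of $\Sigma$ forces $\alpha$ to reach $\Sigma$ at some $p\in\Sigma$ in finite parameter time. Standard properties of the limit curve construction give a subsequence of $(r_n)$ converging to a point on the portion of $\alpha$ between $\Sigma$ and $q$, which lies in the closed set $J^-(K)\cap J^+(\Sigma)$.

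\textbf{Plan for (ii).} Once (i) is available, I would argue by a direct method. Applying (i) with $K=\{q\}$ and using $\Sigma\subseteq J^+(\Sigma)$ (reflexivity of $\leq$), the set $\Sigma\cap J^-(q)$ is a closed subset of the compact set $J^-(q)\cap J^+(\Sigma)$, hence compact. Only points in $\Sigma\cap J^-(q)$ contribute nontrivially to $\tau_\Sigma(q) = \sup_{p\in\Sigma}\tau(p,q)$, so I pick a maximising sequence $p_n\in\Sigma\cap J^-(q)$ with $\tau(p_n,q)\to\tau_\Sigma(q)$ and extract a subsequence $p_n\to p\in\Sigma$. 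The Avez--Seifert theorem for continuous globally hyperbolic spacetimes \cite[Prop.\ 6.4]{Sae:16} provides maximisers $\gamma_n$ from $p_n$ to $q$ with $L(\gamma_n)=\tau(p_n,q)$, and the limit curve theorem yields a causal limit curve $\gamma$ from $p$ to $q$. Upper semi-continuity of Lorentzian arc-length under uniform convergence of causal curves then gives $L(\gamma)\geq \limsup_n L(\gamma_n) = \tau_\Sigma(q)$; combined with the trivial bound $L(\gamma)\leq\tau(p,q)\leq\tau_\Sigma(q)$ this forces equality, so $\gamma$ is the desired maximiser from $\Sigma$ (at $p$) to $q$.

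\textbf{Main obstacle.} The principal subtlety is ensuring that the three classical causality-theoretic tools used above---closedness of $J^\pm$, the limit curve theorem, and upper semi-continuity of Lorentzian arc-length under uniform limits---all remain valid for our merely Lipschitz metrics, where these can fail in the general continuous case \cite{CG:12,GKSS:20}. This is resolved by the fact that $C^{0,1}$-spacetimes are causally plain \cite[Thm.\ 1.20]{CG:12} and embed into both the closed cone structure framework of \cite{Min:19a} and the (strongly localisable) Lorentzian length space setting of \cite{KS:18}, within which these tools are available and have been explicitly set up in \Cref{rem:cs} and \Cref{rem:lls}. The remainder is then a routine adaptation of the classical smooth proof via the direct method.
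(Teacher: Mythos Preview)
Your proposal is correct and can be made rigorous, but it takes a more hands-on route than the paper. For (i), the paper simply observes that a Lipschitz spacetime induces a proper closed cone structure (\Cref{rem:cs}) and, using $J^+(\Sigma)=D^+(\Sigma)$, applies \cite[Thm.\ 2.44]{Min:19a} directly; this is a one-line citation, whereas your limit curve argument essentially reproves a special case of that result and would still need an extra step to guarantee that the $h$-arclength parameters at which the $r_n$ sit stay bounded (this does follow, by contradiction, from the Cauchy property of $\Sigma$, but it is not quite an instance of ``standard properties of the limit curve construction''). For (ii), the paper again shortcuts: by \Cref{rem:lls} and \cite[Thm.\ 3.28]{KS:18} the time separation $\tau$ is finite and \emph{continuous}, so the supremum defining $\tau_\Sigma(q)$ is attained at some $p$ in the compact set $\Sigma\cap J^-(q)$, and a single application of Avez--Seifert \cite[Prop.\ 6.4]{Sae:16} gives the maximiser from $p$ to $q$; no maximising sequence, limit curve, or upper semi-continuity of $L$ is needed. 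Your approach has the merit of being more self-contained (in particular it does not require continuity of $\tau$ as a black box), while the paper's is considerably shorter by fully leveraging the cone-structure and Lorentzian-length-space frameworks that you yourself identified in your ``main obstacle'' paragraph.
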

Recall that by Remark \ref{rem:geomax}(ii) the maximiser $\gamma$ in (ii) above, when parametrised w.r.t.\ $g$-arclength, is a $C^{1,1}$-geodesic in the sense of Filippov.
 
\begin{proof}
(i) $\Sigma$ is a closed acausal topological hypersurface. As noted in Remark \ref{rem:cs}, $C_p:=\{v\in T_pM\,\backslash\,\{0\}:g(v,v)\leq0, v\mbox{ future directed}\}$ 
defines a proper cone structure. So, using $J^+(\Sigma)=D^+(\Sigma)$, we obtain by \cite[Thm.\ 2.44]{Min:19a} that $J^-(K)\cap J^+(\Sigma)$ is compact.
\medskip

(ii) By Remark \ref{rem:lls}, $M$ is a globally hyperbolic Lorentzian length space and so by \cite[Thm.\ 3.28]{KS:18}, $\tau$ is finite and continuous.  Thus $\tau(\cdot,q)$ has a maximum on $J^-(q)\cap \Sigma$, which is compact by (i). 
 Hence there is a $p\in J^-(q)\cap \Sigma$ with $\tau_\Sigma(q)=\tau(p,q)$. By the Avez-Seifert theorem (\cite[Prop.\ 6.4]{Sae:16} or \cite[Thm.\ 3.30]{KS:18}) there exists a maximiser from $p$ to $q$, hence from $\Sigma$ to $q$.
\end{proof}

\begin{lem}[Continuity of $\tau_\Sigma$]\label{Le:continuity of tau_simga}
    Let $(M,g)$ be a globally hyperbolic spacetime with a continuous metric $g$ and Cauchy surface $\Sigma$. Then $\tau_\Sigma$ is continuous.
\end{lem}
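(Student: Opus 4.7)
The plan is to combine the continuity and finiteness of the time separation $\tau\colon M \times M \to [0,\infty)$ on a globally hyperbolic continuous spacetime---for Lipschitz metrics this follows from Remark \ref{rem:lls} together with \cite[Thm.\ 3.28]{KS:18}---with the existence of $\Sigma$-maximisers and the compactness statement of \Cref{Le:properties of sigma}. Lower semicontinuity of $\tau_\Sigma$ at $p$ for $p_n \to p$ will be immediate: if $\tau_\Sigma(p) = 0$ there is nothing to show, while if $\tau_\Sigma(p) > 0$ then \Cref{Le:properties of sigma}(ii) supplies some $q \in \Sigma$ with $\tau(q,p) = \tau_\Sigma(p)$, and the estimate $\tau_\Sigma(p_n) \geq \tau(q,p_n) \to \tau(q,p) = \tau_\Sigma(p)$ yields the claim.

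For upper semicontinuity I would distinguish cases along the decomposition $M = I^-(\Sigma) \sqcup \Sigma \sqcup I^+(\Sigma)$ afforded by $\Sigma$ being a Cauchy surface. If $p \in I^-(\Sigma)$, then $p_n \in I^-(\Sigma)$ eventually by openness, so $\tau_\Sigma(p_n) = 0 = \tau_\Sigma(p)$. If $p \in I^+(\Sigma)$, I would fix a compact neighbourhood $K \Subset I^+(\Sigma)$ of $p$ containing the tail of $(p_n)$, pick maximisers $q_n \in \Sigma$ with $\tau(q_n,p_n) = \tau_\Sigma(p_n)$ via \Cref{Le:properties of sigma}(ii), and observe that they lie in $J^-(K) \cap \Sigma \subseteq J^-(K) \cap J^+(\Sigma)$, which is compact by \Cref{Le:properties of sigma}(i). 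Extracting a convergent subsequence $q_n \to q \in \Sigma$ and using continuity of $\tau$ then yields $\tau_\Sigma(p_n) = \tau(q_n,p_n) \to \tau(q,p) \leq \tau_\Sigma(p)$.

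The main obstacle is the boundary case $p \in \Sigma$: here $\tau_\Sigma(p) = 0$ by chronology and acausality of $\Sigma$, but the $p_n$ may approach $p$ from $I^+(\Sigma)$ and no compact set obviously containing all maximisers $q_n$ is readily available. I would argue by contradiction: if $\tau_\Sigma(p_n) \geq \varepsilon > 0$ along a subsequence, then all such $p_n$ lie in $I^+(\Sigma)$ and admit $q_n \in \Sigma$ with $\tau(q_n,p_n) \geq \varepsilon$. Fixing an auxiliary point $p^+ \in I^+(p)$, the openness of $I^-(p^+)$ ensures $p_n \ll p^+$ eventually, confining the $q_n$ to $J^-(p^+) \cap \Sigma$, which is compact by \Cref{Le:properties of sigma}(i) applied with $K = \{p^+\}$ (a compact subset of $J^+(\Sigma)$ since $p^+ \in I^+(p) \subseteq I^+(\Sigma)$). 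Along a convergent subsequence $q_n \to q \in \Sigma$ the continuity of $\tau$ forces $\tau(q,p) \geq \varepsilon > 0$, hence $q \ll p$, contradicting the acausality of $\Sigma$ since both $q$ and $p$ lie in $\Sigma$. This closes the case analysis and establishes continuity of $\tau_\Sigma$.
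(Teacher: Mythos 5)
Your argument is correct and is essentially the paper's own proof: both rest on realising $\tau_\Sigma$ at the approaching points by maximisers with feet in $\Sigma$, trapping those feet in a compact set of the form $J^-(K)\cap J^+(\Sigma)$ via \Cref{Le:properties of sigma}, extracting a convergent subsequence, and concluding with the continuity of $\tau$. The paper merely organises the upper-semicontinuity step more economically---a single contradiction argument with one auxiliary point $z\in I^+(q)$ above the limit point, which subsumes your three cases $p\in I^-(\Sigma)$, $p\in\Sigma$, $p\in I^+(\Sigma)$ and keeps the appeal to openness of chronological sets implicit (your explicit uses of openness of $I^\pm(\Sigma)$ and $I^-(p^+)$ are harmless in the Lipschitz/causally plain setting in which the lemma is applied, and the paper's ``w.l.o.g.\ $q_n,q\in J^-(z)$'' relies on the same fact).
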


\begin{proof}
    By definition, $\tau_\Sigma$ is a supremum of continuous functions hence it is lower semicontinuous and we only need to show upper semicontinuity. 
         
    Suppose to the contrary that 
    there is a sequence 
    $q_n\to q$ and $\delta>0$ such that $\tau_\Sigma(q_n)\geq\tau_\Sigma(q)+\delta$. 
    Let $z\in I^+(q)$, then w.l.o.g., $q_n, q\in J^-(z)$. By (the proof of) Lemma \ref{Le:properties of sigma}(ii) for all $n$ there is $p_n\in \Sigma\cap J^-(z)$ with $\tau_\Sigma(q_n)=\tau(p_n,q_n)$. By compactness, cf.\ Lemma \ref{Le:properties of sigma}(i), we may assume the $p_n$ to converge to some $p\in\Sigma\cap J^-(z)$ and we have
    \begin{equation}
    \tau(p_n,q_n)=\tau_\Sigma(q_n)\geq\tau_\Sigma(q)+\delta\geq\tau(p,q)+\delta
    \end{equation}
    But by continuity $\tau(p_n,q_n)\to\tau(p,q)$, which is a contradiction.
\end{proof}

In the following we will make use of the smooth metrics $\check g_\eps$ approximating the rough metric $g$ `from the inside' as introduced in \Cref{Le:approximating metrics}. In particular, we use a monotone sequence $\check g_{\varepsilon_k}$ as in item (iv) of that lemma, and for convenience we set 
\begin{equation}\label{eq:tauk}
     g_k:=\check g_{\varepsilon_k},\quad \tau_k:=\tau_{\check g_{\eps_k}}, \quad\text{and}\quad \tau_{\Sigma,k}:=\tau_{\Sigma,\check g_{\eps_k}}.
\end{equation}
Since the $g_k$ have narrower light cones 
than $g$, each $g_k$ is globally hyperbolic itself, and any Cauchy surface for $g$ is also a Cauchy surface for each $g_k$.
\medskip 

\begin{lem}[Convergence of time-separations]\label{Le:uniform convergence of tau}
Let $(M,g)$ be a globally hyperbolic spacetime with locally Lipschitz metric $g$ and Cauchy surface $\Sigma$. Then we have
\begin{enumerate}
        \item $\tau_{k}\rightarrow\tau$ locally uniformly, and
        \item $\tau_{\Sigma,k}\rightarrow\tau_\Sigma$ locally uniformly.
    \end{enumerate}
\end{lem}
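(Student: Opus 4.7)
The plan is to establish both (i) and (ii) by proving sequential convergence $\tau_k(p_k,q_k) \to \tau(p,q)$ whenever $(p_k,q_k) \to (p,q)$ (and analogously $\tau_{\Sigma,k}(p_k) \to \tau_\Sigma(p)$ whenever $p_k \to p$); combined with the continuity of $\tau$ (via \Cref{rem:lls} and \cite[Prop.~6.4]{Sae:16}) and of $\tau_\Sigma$ (\Cref{Le:continuity of tau_simga}), this yields local uniform convergence on compact sets. Each sequential convergence splits into a matching lower and upper bound.

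For the lower bound in (i), the non-trivial case is $\tau(p,q)>0$. By \Cref{Le:properties of sigma}(ii) and \Cref{rem:geomax}(ii) there is a $g$-maximiser $\gamma:[0,L]\to M$ from $p$ to $q$, parametrised by $g$-arclength so that $g(\dot\gamma,\dot\gamma)\equiv -1$. \Cref{Le:approximating metrics}(ii) gives $g_k(\dot\gamma,\dot\gamma) \to -1$ uniformly along the image of $\gamma$; hence for $k$ large, $\gamma$ is $g_k$-timelike with $L_{g_k}(\gamma) \to L$. For fixed $\delta>0$, the points $p' := \gamma(\delta) \in I^+_g(p)$ and $q' := \gamma(L-\delta) \in I^-_g(q)$ can be connected to $p_k$ and $q_k$ respectively by short $g_k$-timelike curves once $k$ is large, as follows from the openness of $I^\pm_{g_k}$ for smooth $g_k$ together with the monotonicity $g_k \prec g_{k+1}$ of \Cref{Le:approximating metrics}(iv), which implies $I^+_{g_{k_0}}(p) \subseteq I^+_{g_k}(p)$ for $k \geq k_0$ and hence permits a diagonal argument. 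Concatenating these with $\gamma|_{[\delta, L-\delta]}$ yields a $g_k$-causal curve from $p_k$ to $q_k$ of $g_k$-length at least $L - 2\delta - o(1)$; letting $\delta \to 0$ concludes the lower bound.

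For the upper bound in (i), pick near-maximisers $\gamma_k$ with $L_{g_k}(\gamma_k) \geq \tau_k(p_k,q_k) - 1/k$. Each $\gamma_k$ is $g$-causal (since $g_k \prec g$), and by \Cref{Le:properties of sigma}(i) the curves $\gamma_k$ lie in a fixed compact set; an Arzela-Ascoli argument on $h$-arclength reparametrisations extracts a subsequence converging uniformly to a $g$-causal curve $\gamma$ from $p$ to $q$. The key step is then upper semicontinuity of length under joint convergence of metrics and curves, i.e.\ $\limsup_k L_{g_k}(\gamma_k) \leq L_g(\gamma) \leq \tau(p,q)$; this extends the standard limit-curve/upper-semicontinuity result for closed cone structures (\Cref{rem:cs} and the framework of \cite{Min:19a}) from a fixed cone structure to one that varies via the uniformly converging $g_k$.

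For (ii), consider a sequence $p_k \to p$ in a compact subset of $M$. If $p \notin J^+(\Sigma)$, both $\tau_\Sigma(p) = 0$ and $\tau_{\Sigma,k}(p_k) = 0$ for $k$ large using the inclusion $J^+_{g_k}(\Sigma) \subseteq J^+_g(\Sigma)$. Otherwise, by \Cref{Le:properties of sigma}(i) the set $\Sigma \cap J^-_g(p')$ is compact for any $p' \in I^+_g(p)$ and contains all $\Sigma \cap J^-_{g_k}(p_k)$ for $k$ large; hence the suprema defining $\tau_\Sigma(p)$ and $\tau_{\Sigma,k}(p_k)$ are attained at points $q^* \in \Sigma$ and $q^*_k \in \Sigma$ therein. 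The lower bound $\liminf_k \tau_{\Sigma,k}(p_k) \geq \tau_\Sigma(p)$ then follows directly from (i) applied to $(q^*, p_k) \to (q^*, p)$: $\tau_{\Sigma,k}(p_k) \geq \tau_k(q^*, p_k) \to \tau(q^*, p) = \tau_\Sigma(p)$. For the upper bound, extract a subsequence $q^*_k \to q^*_\infty$ in the compact set, whence (i) applied to $(q^*_k, p_k) \to (q^*_\infty, p)$ yields $\tau_{\Sigma,k}(p_k) = \tau_k(q^*_k, p_k) \to \tau(q^*_\infty, p) \leq \tau_\Sigma(p)$. The main obstacle throughout is the upper-semicontinuity step in (i), which must be carried out with varying metrics approaching a merely Lipschitz limit and with causal tangent vectors possibly degenerating to null; the remaining arguments are standard compactness-and-continuity manipulations.
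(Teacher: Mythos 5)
Your overall strategy is sound and, modulo one step you only assert, it does prove the lemma --- but it is a genuinely different and considerably heavier route than the paper's. The paper first records the monotonicity $\tau_k\leq\tau_{k+1}\leq\tau$ (and $\tau_{\Sigma,k}\leq\tau_{\Sigma,k+1}\leq\tau_\Sigma$), which is immediate from \Cref{Le:thinner lightcones}: every $g_k$-causal curve is $g_{k+1}$- and $g$-causal and its $g_k$-length does not exceed its $g_{k+1}$- or $g$-length. It then proves only \emph{pointwise} convergence, using exactly the idea of your lower bound but with \emph{fixed} endpoints --- the $g$-maximiser from $p$ to $q$ is a $C^{1,1}$ unit-speed curve, hence $g_k$-timelike for large $k$ with $L_{g_k}(\gamma)\to L_g(\gamma)$ --- and upgrades to locally uniform convergence by Dini's theorem, using continuity of $\tau$ and of $\tau_\Sigma$ (\Cref{Le:continuity of tau_simga}); part (ii) is then a three-line consequence via the point $p\in\Sigma$ realising $\tau_\Sigma(q)=\tau(p,q)$. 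Your continuous-convergence formulation forces you to work with moving endpoints $(p_k,q_k)$, which is what creates the two extra pieces of machinery: the connecting-curve construction in the lower bound (which is fine --- the nested cones $g_{k_0}\prec g_k$ from \Cref{Le:approximating metrics}(iv) do make $I^\pm_{g_{k_0}}$ a $k$-independent open neighbourhood, so your diagonal argument works) and, for the upper bound, a limit-curve argument requiring upper semicontinuity of the length functional under \emph{joint} convergence of metrics and curves, which you flag as the main obstacle and only assert. That assertion can be justified (e.g.\ by $|L_{g_k}(\gamma_k)-L_g(\gamma_k)|\to 0$ on a compact set plus upper semicontinuity of $L_g$ for causally plain metrics, or by sandwiching with the outer approximations $\hat g_{\eps_m}$ as in the paper's proof of \Cref{Thm: Hawking non globally hyperbolic}), but it is entirely unnecessary: since $g_k\prec g$ and $L_{g_k}\leq L_g$ on $g_k$-causal curves, one has $\tau_k\leq\tau$ pointwise, so $\limsup_k\tau_k(p_k,q_k)\leq\limsup_k\tau(p_k,q_k)=\tau(p,q)$ by continuity of $\tau$, and likewise $\tau_{\Sigma,k}\leq\tau_\Sigma$ disposes of the upper bound and of the case distinction in your part (ii). In short: what your approach buys is independence from Dini (no monotonicity needed for the \emph{uniformity} step), but you pay for it with the hardest and least-substantiated step of your write-up, which the elementary inequality $\tau_k\leq\tau$ renders superfluous.
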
  
    
\begin{rem} In \cite[Prop. A.2]{McCS:22} statement (i) is proven
    for continuous, causally plain and strongly causal metrics $g$, but for monotonously approximating metrics with \emph{wider} light cones, i.e., 
    $\tau_{\hat g_{\varepsilon_k}}\to \tau$ locally uniformly. Observe that their proof does not work in our case since it relies on the property $\tau(p, q)\leq \tau_{\hat g_{\varepsilon_{k+1}}}(p,q)\leq \tau_{\hat g_{\varepsilon_k}}(p,q)$, which is reversed for $\tau_k:=\tau_{\check g_{\eps_k}}$, see \eqref{Eq:tau_k<tau_k+1<tau} below. 
\end{rem}

To establish Lemma \ref{Le:uniform convergence of tau} we shall require the following result, which is the analogue of \cite[Lem. A.1 (iii) and (iv)]{McCS:22} for metrics $g_k=\check g_{\varepsilon_k}$ with narrower light-cones and which can be proven in full analogy.

\begin{lem}\label{Le:thinner lightcones} Let $(M,g)$ be a continuous spacetime and let $g_k$ be as in \eqref{eq:tauk}. Then 
the sequence $\eps_k\searrow 0$ can be chosen in such a way that,
for all $g_k$-causal $X\in TM$:
    \begin{enumerate}
        \item [(i)]$-g_k(X,X)<-g(X,X)$, and 
        \item [(ii)]$-g_k(X,X)\leq-g_{k+1}(X,X)$.
    \end{enumerate}
\end{lem}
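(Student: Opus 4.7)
The plan is to mirror the argument for \cite[Lem.~A.1(iii)--(iv)]{McCS:22}, which the authors already flag as the direct template; only the direction of the inequalities changes. The key input is the explicit structure of the approximations $\check g_\eps$ from \cite[Prop.~2.3]{KSV:15}, in which the cone-narrowing is implemented by adding to the mollification $g_\eps = g\star_M\rho_\eps$ a correction term whose magnitude and sign on $\check g_\eps$-causal vectors are both controlled by a definite function of $\eps$. Combined with Lemma~\ref{Le:approximating metrics}(ii), which gives $g_\eps\to g$ locally uniformly, this narrowing correction will dominate the difference $g_\eps - g$ on any fixed compact set once $\eps$ is small enough.

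For (i), I would decompose, on a fixed compact $K\Subset M$ and for any $\check g_{\eps_k}$-causal vector $X$ over $K$,
\[
\check g_{\eps_k}(X,X) - g(X,X) \;=\; (\check g_{\eps_k}-g_{\eps_k})(X,X) + (g_{\eps_k}-g)(X,X).
\]
The first summand is positive and of order dictated by the narrowing rate built into the construction of $\check g_\eps$, while by Lemma~\ref{Le:approximating metrics}(ii)--(iii) the second is of strictly smaller order in $\eps_k$ on $K$. Hence for $\eps_k$ small enough (depending only on $K$) one gets $\check g_{\eps_k}(X,X) > g(X,X)$, i.e.\ $-g_k(X,X) < -g(X,X)$. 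For (ii), the analogous decomposition of $\check g_{\eps_{k+1}} - \check g_{\eps_k}$ on $\check g_{\eps_k}$-causal vectors shows that, after choosing $\eps_{k+1}$ sufficiently small relative to $\eps_k$, the $\eps_k$-narrowing correction dominates (with the correct sign) both the smoothing difference $g_{\eps_{k+1}} - g_{\eps_k}$ and the $\eps_{k+1}$-narrowing at the next level, producing $\check g_{\eps_{k+1}}(X,X) \leq \check g_{\eps_k}(X,X)$ on $\check g_{\eps_k}$-causal vectors.

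The main obstacle is globalising these compact-set estimates to a single sequence $\eps_k\searrow 0$ valid on all of $M$. I would address this by a standard Cantor-diagonal construction along a compact exhaustion $M = \bigcup_j K_j$ with $K_j\Subset K_{j+1}$: at step $k$ one imposes the bounds from (i) and (ii) on $K_k$ and then chooses $\eps_k$ small enough to absorb all previously fixed constraints together with the new ones on $K_k$. Continuity of the quadratic forms in $X$ then propagates the pointwise comparison from strictly $\check g_{\eps_k}$-timelike vectors to the $\check g_{\eps_k}$-null boundary of the cone, where (i) reduces to the strict cone containment $\check g_{\eps_k}\prec g$ already recorded in Lemma~\ref{Le:approximating metrics}(i), and (ii) produces the non-strict bound stated in the lemma. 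The cited Lemma~A.1 of \cite{McCS:22} carries out this bookkeeping in the wider-cone case; the sign-flipped version needed here is formally identical.
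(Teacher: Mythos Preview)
Your proposal is correct and follows precisely the approach the paper indicates: the paper gives no detailed proof but simply states that the result ``can be proven in full analogy'' with \cite[Lem.~A.1(iii)--(iv)]{McCS:22}, and your sketch does exactly this---decomposing $\check g_{\eps_k}-g$ (resp.\ $\check g_{\eps_{k+1}}-\check g_{\eps_k}$) into the narrowing correction from the explicit construction in \cite{KSV:15} plus mollification errors controlled by Lemma~\ref{Le:approximating metrics}, then globalising via a diagonal argument over a compact exhaustion. The only point to make precise when writing this out is that the claim ``the second summand is of strictly smaller order'' is not a consequence of Lemma~\ref{Le:approximating metrics}(ii)--(iii) alone but rather of how $\eta_\eps$ is chosen relative to $\|g_\eps-g\|_{L^\infty}$ in the construction of $\check g_\eps$; this is already built into \cite[Prop.~2.3]{KSV:15} (and is what guarantees $\check g_\eps\prec g$ in the first place), so your reference to that construction covers it.
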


    \begin{proof}[Proof of \Cref{Le:uniform convergence of tau}]
    (i) 
    Let $p,q$ in some $K\Subset M$ and observe first that 
    \begin{align}\label{Eq:tau_k<tau_k+1<tau}
        \tau_k(p,q)\leq\tau_{k+1}(p,q)\leq\tau(p,q).
    \end{align}
    Indeed any $g_k$-causal $\gamma$ from $p$ to $q$ is also $g_{k+1}$- and $g$-causal. Then from  \Cref{Le:thinner lightcones}  we have
   $L_{g_k}(\gamma)\leq  L_{g_{k+1}}(\gamma) \leq L_g(\gamma)$ and so 
        \begin{align}
            \label{eq:useless}\tau_k(p,q)=\sup_{\gamma\,g_k\text{-causal}} L_{g_k}(\gamma)\leq\sup_{\gamma\,g_{k+1}\text{-causal}}L_{g_{k+1}}(\gamma)\leq\sup_{\gamma\,g\text{-causal}}L_g(\gamma)=\tau(p,q).
        \end{align}

    We next prove pointwise convergence of $\tau_k$ to $\tau$, which together with monotonicity \eqref{Eq:tau_k<tau_k+1<tau} gives the claim by Dini's theorem (e.g.\ \cite[Thm.\ 2.66]{AB:06}).
    In case $\tau(p,q)=0$, again 
    by \eqref{Eq:tau_k<tau_k+1<tau} all $\tau_k(p,q)$ vanish, and so we only need to consider $\tau(p,q)>0$. 
    By the Avez-Seifert theorem there is a $g$-maximiser $\gamma$ from $p$ to $q$, which when parametrised to $g$-unit speed is a $C^{1,1}$-curve (cf.\ Remark \ref{rem:geomax}(ii)), implying that $\gamma$ is $g_k$-timelike for $k$ large enough,
    and we have $L_{g_k}(\gamma)\rightarrow L_g(\gamma)$. Hence for $\delta >0$ and $k$ large enough
    \begin{align}\label{Eq:tau pointwise}
            \tau_k(p,q) \geq L_{g_k}(\gamma)\geq L_g(\gamma)-\delta=\tau(p,q)-\delta,
        \end{align}
    and convergence follows by combining this estimate with \eqref{Eq:tau_k<tau_k+1<tau}.

    (ii) As before, since $g_k$-causal curves are $g_{k+1}$- and $g$-causal we have monotonicity, i.e., $\tau_{\Sigma,k}\leq\tau_{\Sigma,k+1}\leq\tau_\Sigma$ and again it suffices to show $\tau_{\Sigma,k}(q)\to\tau_\Sigma(q)$ for all $q\in K\Subset M$. Given such $q$ by (the proof of) Lemma \ref{Le:properties of sigma}(ii) there is $p\in\Sigma$ with $\tau_\Sigma(q)=\tau(p,q)$ and since by (i) the latter expression equals $\lim_{k\rightarrow\infty}\tau_k(p,q)$ we have for any $\delta>0$ and all $k$ large enough
    \begin{equation}
        \tau_\Sigma(q)\geq\tau_{\Sigma,k}(q)\geq\tau_k(p,q)\geq\tau(p,q)-\delta=\tau_\Sigma(q)-\delta
    \end{equation}
    and we are done.
\end{proof}

We next employ the regularisation result \Cref{Pr:Ricci Lp-convergence}(ii) to derive a uniform lower bound on the Ricci tensor $\Ric[g_k]$ of the regularised metrics from the distributional strong energy condition on  $\Ric[g]$.

\begin{prop}[Lower uniform bound on ${\Ric[g_k]}$]\label{Pr:(i) for our Hawking theorem}
    Let $(M,g)$ be a spacetime with a locally Lipschitz metric $g$. Suppose that there is some $C\in\R$ such that 
    \begin{equation}\label{eq:ric_pos_assumption}
        \Ric[g](X,X)\geq C \cdot g(X,X)\quad\text{distributionally for all $g$-timelike $X\in\mathfrak{X}(M)$.}
    \end{equation}
    Then for any compact $K\Subset M$ there is $\tilde C\in\R$ such that for all $\eta < 0$ 
    there exists some $k_0\in \N$ such that, for all $k\ge k_0$ and all $X\in TM|_K$ with 
    $\|X\|_h \le D$  and $g(X,X)\le \eta$ we have
    \begin{align}\label{Eq: Ric>-delta}
       \Ric[g_k](X,X)\geq \tilde C\cdot g_k(X,X).
    \end{align}
\end{prop}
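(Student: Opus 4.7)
The plan is to mollify the distributional Ricci bound, transfer it to the smooth tensor $\Ric[g]\star_M\rho_\eps$, and then to $\Ric[g_k]=\Ric[\check g_{\eps_k}]$ via the estimates of Section \ref{sec:regularisation} and Lemma \ref{Le:approximating metrics}(iii). First I would exploit the positivity of the mollifier: for any globally $g$-timelike smooth vector field $Y$, the hypothesis gives $\Ric[g](Y,Y)-Cg(Y,Y)\ge 0$ in $\mathscr{D}'(M)$, and since $\rho_\eps\ge 0$, this implies
\[
\big(\Ric[g](Y,Y)-Cg(Y,Y)\big)\star_M\rho_\eps\ge 0
\]
pointwise on $M$. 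To turn this into a pointwise bound for a single vector $v\in T_pM$ with $\|v\|_h\le D$ and $g(v,v)\le\eta<0$, I would construct $Y$ as follows: work in a coordinate chart around $p$ (following the reduction to $M=\R^n$ made after Proposition \ref{Pr:Ricci Lp-convergence}); extend $v$ to a constant-coefficient local vector field $\tilde v$, which is $g$-timelike on a neighbourhood of $p$ by continuity of $g$ and $g(v,v)\le\eta<0$; and then cut off against a background global timelike field to obtain a globally $g$-timelike smooth $Y$ coinciding with $\tilde v$ near $p$. For such $Y$, the $L^\infty$-analogue of Corollary \ref{Le:Box2 of L4.8}, obtained by invoking the $L^\infty$-bound of Lemma \ref{order1Fridrichs}(ii) instead of the $L^p$-convergence of its part (i), yields pointwise control
\[
\big|(\Ric[g]\star_M\rho_\eps)(Y,Y)-\big(\Ric[g](Y,Y)\big)\star_M\rho_\eps\big|\le C''_K D^2
\]
on $K$, and an analogous bound with $\Ric[g]$ replaced by $g$. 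Combining with nonnegativity gives
\[
(\Ric[g]\star_M\rho_\eps)(v,v)\ \ge\ Cg_\eps(v,v)\ -\ C'_K D^2.
\]

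Next I apply Proposition \ref{Pr:Ricci Lp-convergence}(ii) to pass from $(\Ric[g]\star_M\rho_\eps)(v,v)$ to $\Ric[g_\eps](v,v)$ at the cost of an error $C_K D^2$; then Lemma \ref{Le:approximating metrics}(iii) to swap $\Ric[g_\eps]=\Ric[g_{\eps_k}]$ for $\Ric[\check g_{\eps_k}]=\Ric[g_k]$ up to a term vanishing in $C^\infty_{\mathrm{loc}}$; and finally the uniform convergences $g_{\eps_k}\to g$, $g_k\to g$ from Lemma \ref{Le:approximating metrics}(ii) to replace $g_\eps(v,v)$ by $g_k(v,v)$ up to an $o(1)D^2$ error. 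Collecting these ingredients yields
\[
\Ric[g_k](v,v)\ \ge\ Cg_k(v,v)\ -\ \tilde C'_K D^2\qquad(k\ge k_0(\eta)).
\]
Since $g(v,v)\le\eta$ and $g_k\to g$ uniformly on $K$, for $k$ large we have $g_k(v,v)\le\eta/2<0$, so $|g_k(v,v)|\ge|\eta|/2$; the residual $-\tilde C'_K D^2$ can then be absorbed into the $g_k$-term by choosing $\tilde C\ge C+2\tilde C'_K D^2/|\eta|$, giving the desired $\Ric[g_k](v,v)\ge\tilde C g_k(v,v)$.

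The main obstacle lies in the first step: extracting a \emph{pointwise} (not merely $L^p$) inequality from the distributional hypothesis. Corollary \ref{Le:Box2 of L4.8} as stated only furnishes $L^p$-convergence of the commutator between $\star_M$ and tensor contraction, which is insufficient for a pointwise conclusion. The key observation is that the Friedrichs-type Lemma \ref{order1Fridrichs}, when its $L^\infty$-boundedness part (ii) is applied to the constant sequence $a_\eps=a$ as in the proof of Corollary \ref{Le:Box2 of L4.8}, produces a uniform $L^\infty$-bound on exactly this commutator. This $L^\infty$-bound, together with the constant-coefficient extension of $v$ (which makes the principal part of the commutator vanish identically under the local $\ast$-convolution, leaving only bounded contributions from the cut-off and the partition of unity underlying $\star_M$), delivers the required pointwise estimate and thereby unlocks the whole argument.
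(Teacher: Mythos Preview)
Your proposal is correct and follows essentially the same route as the paper's proof: localise to $\R^n$, extend a given $v$ with $g(v,v)\le\eta$ to a constant-coefficient vector field that is $g$-timelike on a small ball (radius controlled by uniform continuity of $g$ and by $\eta$), mollify the distributional inequality, compare with $\Ric[g_\eps]$ via Proposition~\ref{Pr:Ricci Lp-convergence}(ii), pass to $\check g_{\eps_k}$ using Lemma~\ref{Le:approximating metrics}(iii), and absorb the residual $O(D^2)$ error into $g_k(v,v)$ using $g_k(v,v)\le\eta/2$.

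The one place where your write-up is slightly more convoluted than necessary is the invocation of an ``$L^\infty$-analogue of Corollary~\ref{Le:Box2 of L4.8}''. As you yourself note in the final paragraph, once $Y=\tilde v$ is constant in local coordinates (and after the reduction $\star_M\rightsquigarrow *$), the commutator $(\Ric[g]*\rho_\eps)(\tilde v,\tilde v)-(\Ric[g](\tilde v,\tilde v))*\rho_\eps$ vanishes \emph{identically}, not merely up to a bounded error. The paper exploits this directly: with $\tilde X$ constant one has $(\Ric[g]_{ij}\tilde X^i\tilde X^j)*\rho_k=(\Ric[g]_{ij}*\rho_k)\tilde X^i\tilde X^j$, so the only estimate needed is the componentwise $L^\infty$-bound $\|\Ric[g_k]_{ij}-\Ric[g]_{ij}*\rho_k\|_{L^\infty(K)}\le C_K$ from Proposition~\ref{Pr:Ricci Lp-convergence}(ii). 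Your route via Lemma~\ref{order1Fridrichs}(ii) would also work, but it is not required.
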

\begin{proof}
We first note that it suffices to show the claim for $g_{\eps_k}$ instead of $g_k=(\check g_{\eps_k})$. Indeed, suppose that we already know \eqref{Eq: Ric>-delta} for $g_{\eps_k}$ instead of $g_k$. Then due to \Cref{Le:approximating metrics}, for any $\delta>0$ and $k$ sufficiently large we
have 
\begin{align*}
\Ric[g_k](X,X) - \tilde C\cdot g_k(X,X) &= (\Ric[g_k]-\Ric[g_{\eps_k}])(X,X) \\
&+   (\Ric[g_{\eps_k}](X,X) - \tilde Cg_{\eps_k}(X,X)) + \tilde C(g_{\eps_k}(X,X)-g_k(X,X)) \\
&\ge -\delta \ge  \frac{-\delta}{\eta} g(X,X) \ge \frac{-2\delta}{\eta} g_k(X,X),
\end{align*}
which gives \eqref{Eq: Ric>-delta} with $\tilde C - \frac{2\delta}{\eta}$ instead of $\tilde C$. For the remainder of this proof we therefore may assume that $g_k\equiv g_{\eps_k}$.

Next, since the claim is local, we may suppose that $M=\R^n$, $h$ is the standard Euclidean metric, and we can replace $\star_M$ by the standard convolution $\ast$ on $\R^n$ (cf.\ the notational conventions in Section \ref{ssec:mfreg}). 
Since \eqref{eq:ric_pos_assumption} is supposed to hold for $g$-timelike vector fields, we may without loss of generality assume that $C>0$. 
We will follow the basic layout of the proof of \cite[Lem.\ 3.2]{KSSV:15}.

To begin with, by uniform continuity of $g$ on $K$ there exists some $r>0$ such that, for all $p,x\in K$ with $\|p-x\|<r$ and
any $X\in \R^n$ with $\|X\|\le D$, we have $\|g_p(X,X)-g_x(X,X)\| < -\eta$. Fixing $p\in K$, it follows that on the open ball $B_r(p)$ the constant vector field $\tilde X:= x\mapsto X$ is $g$-timelike. Choose a cut-off function $\chi\in C^\infty_c(\R^n)$ with $\chi\equiv 1$ in a neighbourhood of $\overline{B_r(p)}$ and set, for $1\le i,j\le n$, $\tilde R_{ij} := \chi \cdot\Ric[g]_{ij} \in \D'(\R^n)$.  Due to \eqref{eq:ric_pos_assumption}, for any $x\in B_{r-1/k}(p)$ we have 
\begin{align}\label{eq:tilde_R}
(\tilde R_{ij}\tilde X^i\tilde X^j)*\rho_k(x) \ge (Cg_{ij}\tilde X^i \tilde X^j)*\rho_k(x) = C (g_k)_{ij}(x)X^iX^j.
\end{align}
Now note that for $1/k < r$ we have $(\Ric[g]_{ij}*\rho_k)(p) = (\tilde R_{ij}*\rho_k)(p)$. Thus for $k>1/r$ we obtain,
using the constancy of $\tilde X$,
\begin{equation}\label{eq:loc_Ric_est_uniform}
\begin{split}
|\Ric[g_k]_{ij}(p)X^iX^k -  ((\tilde R_{ij}\tilde X^i \tilde X^j)*\rho_k)(p)| &= |(\Ric[g_k]_{ij}(p) - \Ric[g]_{ij}*\rho_k(p))X^iX^j| \\
&\le D^2\; \max_{i,j} \,\sup_{x\in K}|\Ric[g_k]_{ij}(x) - \Ric[g]_{ij}*\rho_k(x)|.
\end{split}
\end{equation}
Using \eqref{eq:tilde_R} we arrive at
\begin{equation}\label{eq:pre-final}
\begin{split}
\Ric[g_k]_{ij}(p)X^iX^j &= (\tilde R_{ij}X^iX^j)* \rho_k (p) + (\Ric[g_k]_{ij}(p)X^iX^j - (\tilde R_{ij}X^iX^j)* \rho_k (p)) \\
            &\ge C (g_k)_{ij}(p)X^iX^j - |\Ric[g_k]_{ij}(p)X^iX^j -  ((\tilde R_{ij}\tilde X^i \tilde X^j)*\rho_k)(p)|\\
            &=: \Box_1 + \Box_2.
\end{split}    
\end{equation}
Employing \eqref{eq:loc_Ric_est_uniform}, together with Proposition \ref{Pr:Ricci Lp-convergence}(ii), we see that
\[
\Box_2 \ge -C_K D^2 \ge - \frac{C_K D^2}{\eta} g_{ij}(p)X^iX^j \ge  -\frac{2C_K D^2}{\eta} (g_k)_{ij}(p)X^iX^j
\]
for $k$ large since $g_k(X,X) \to g(X,X)$ uniformly on $K\times \overline{B_D(0)}$. Combining this
with \eqref{eq:pre-final} gives the claim with $\tilde C := C - \frac{2C_K D^2}{\eta}$.
\end{proof}
In the following result, we call a sequence $(X_k)$ of vector fields on $M$ \emph{locally uniformly timelike} if for each $K\Subset M$ there exists some $c<0$ such that, for each $k\in \N$, $g(X_k,X_k)<c$ on $K$.

\begin{lem}[$L^p$-convergence for ${\Ric[g_k]}$]\label{Lem:(iv) of our Hawking theorem} 
Let $\Ric[g](X,X)\geq \rho g(X,X)$ in distributions for some $\rho\in \R$ and each $g$-timelike $X\in \mathfrak{X}(M)$ and suppose that $(X_k)$ is a sequence of smooth
locally uniformly bounded and locally uniformly $g$-timelike vector fields.
Then for all $K\Subset M$ and any $p\in [1,\infty)$ we have (with $(\,)_-$ denoting the negative part of a function)
\begin{equation}\label{eq:neg_ricci_claim}
    \big(\Ric[g_k](X_k,X_k)-\rho g_k(X_k,X_k)\big)_-\to 0\quad\mbox{in }L^p(K) 
\end{equation}
as $k\to \infty$.
\end{lem}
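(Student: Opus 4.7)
The plan is to decompose $R_k := \Ric[g_k](X_k,X_k) - \rho g_k(X_k,X_k)$ so as to isolate a manifestly nonnegative piece coming from the distributional energy assumption, modulo a remainder that tends to zero in $L^p(K)$. Since each $X_k$ is smooth and $g$-timelike, the hypothesis yields that $(\Ric[g]-\rho g)(X_k,X_k)$ is a nonnegative scalar distribution (a nonnegative Radon measure), and hence its manifold convolution
\begin{equation*}
C_k := \bigl((\Ric[g]-\rho g)(X_k,X_k)\bigr)\star_M\rho_{\eps_k}
\end{equation*}
is a nonnegative smooth function on $M$. Writing $R_k = E_k + B_k + C_k$ with
\begin{align*}
E_k &:= \bigl(\Ric[g_k]-\Ric[g]\star_M\rho_{\eps_k}\bigr)(X_k,X_k) - \rho\bigl(g_k - g\star_M\rho_{\eps_k}\bigr)(X_k,X_k),\\
B_k &:= \bigl((\Ric[g]-\rho g)\star_M\rho_{\eps_k}\bigr)(X_k,X_k) - \bigl((\Ric[g]-\rho g)(X_k,X_k)\bigr)\star_M\rho_{\eps_k},
\end{align*}
the pointwise inequality $C_k \ge 0$ gives $(R_k)_- \le |E_k| + |B_k|$, so it suffices to prove both $\Vert E_k\Vert_{L^p(K)} \to 0$ and $\Vert B_k\Vert_{L^p(K)} \to 0$.

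For $E_k$, I would split $\Ric[g_k]-\Ric[g]\star_M\rho_{\eps_k} = (\Ric[g_k]-\Ric[g_{\eps_k}]) + (\Ric[g_{\eps_k}]-\Ric[g]\star_M\rho_{\eps_k})$. By Lemma \ref{Le:approximating metrics}(iii) the first bracket converges to zero in $C^\infty_{\loc}(M)$, and by Proposition \ref{Pr:Ricci Lp-convergence}(i) the second converges to zero in $L^p(K)$. The identity $g\star_M\rho_{\eps_k} = g_{\eps_k}$ together with the same Lemma \ref{Le:approximating metrics}(iii) gives $g_k - g\star_M\rho_{\eps_k} \to 0$ in $C^\infty_{\loc}(M)$. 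Contracting with the smooth, locally uniformly bounded tensor $X_k^iX_k^j$ preserves $L^p$-convergence on $K$, yielding $E_k\to 0$ in $L^p(K)$.

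The main technical step is $B_k$, a commutator between manifold convolution and contraction with $X_k\otimes X_k$. It is of exactly the form treated in Corollary \ref{Le:Box2 of L4.8}, with the crucial difference that there the smooth test field is fixed, whereas here it depends on $k$. The plan is to adapt the proof of that corollary -- which, in a local chart, reduces via Lemma \ref{order1Fridrichs}(i) applied to a constant net $a_\eps = a$ to showing that
\begin{equation*}
[(\partial_j f)\ast\rho_{\eps_k}]\,a_k \,-\, [(\partial_j f)\,a_k]\ast\rho_{\eps_k} \;\longrightarrow\; 0 \quad \text{in } L^p(K),
\end{equation*}
with $a_k = X_k^iX_k^j$ smooth and locally uniformly bounded and $f\in L^\infty$ -- so that the Friedrichs-kernel estimates are uniform in $k$. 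This uniformity is the main obstacle, since the argument as written invokes a Lipschitz constant of $a_k$; one either exploits the specific form $\Ric[g] = \partial(\text{Lipschitz}\cdot L^\infty)$ to reorganise the commutator, or uses the additional smoothness of each $X_k$ inherited in the intended application (the $X_k$ coming from pushforwards of a fixed frame under the $g_k$-normal exponential flow of $\Sigma$, which will have controllable first-order behaviour on compact sets). Granting $\Vert B_k\Vert_{L^p(K)}\to 0$, combining with the estimate on $E_k$ yields $(R_k)_- \to 0$ in $L^p(K)$, as claimed.
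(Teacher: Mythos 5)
Your overall strategy coincides with the paper's: reduce to $g_{\eps_k}$, isolate the mollification of the nonnegative distribution $(\Ric[g]-\rho g)(X_k,X_k)$ so it can be discarded from the negative part, control $\Ric[g_k]-\Ric[g]\star_M\rho_{\eps_k}$ contracted with the uniformly bounded $X_k$ via \Cref{Le:approximating metrics}(iii) and \Cref{Pr:Ricci Lp-convergence}(i), and treat the remaining commutator between $\star_M\rho_{\eps_k}$ and contraction with $X_k\otimes X_k$ by a Friedrichs-type argument. The terms you call $E_k$ and $C_k$ are handled correctly.

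However, the term $B_k$ is precisely where the real difficulty of the lemma lies, and your proposal leaves a genuine gap there. \Cref{Le:Box2 of L4.8} is proved for \emph{fixed} smooth vector fields: in local coordinates its proof applies \Cref{order1Fridrichs}(i) with $a=X^iY^j$, and the resulting bounds involve $\Lip(a)$, i.e.\ first derivatives of the inserted fields. In the lemma as stated the only hypotheses on $(X_k)$ are local uniform boundedness and uniform timelikeness --- no derivative control whatsoever --- so neither of your two suggested fixes closes the gap: the ``reorganisation'' of the commutator is not carried out, and invoking first-order control of the $X_k$ from the intended application (the fields ${}^kU$ built from $g_k$-normal geodesics) would amount to strengthening the hypotheses; such uniform derivative bounds are exactly what the paper is careful not to assume. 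The paper's resolution is a frozen-coefficient trick: fix a smooth $h$-orthonormal frame $F_1,\dots,F_n$, and for each point $\bar x\in K$ replace $X_k$ by $\bar X_k=\sum_j\alpha^k_jF_j$ with \emph{constant} coefficients $\alpha^k_j$ (the components of $X_k(\bar x)$), which agrees with $X_k$ at $\bar x$, is uniformly bounded, and---by the Lipschitz continuity of $g$ and the uniform timelikeness, after shrinking the diameter of the compact set---is still $g$-timelike nearby, so the nonnegativity of the mollified energy term survives. Because the coefficients are constants bounded by a $\bar x$-independent constant, the negative part at $\bar x$ is then estimated pointwise by $\hat A_k+\hat B_k+\hat D_k$, where these are the maxima of the corresponding error terms evaluated on the \emph{fixed} pairs $(F_j,F_l)$; to these \Cref{Pr:Ricci Lp-convergence}(i) and \Cref{Le:Box2 of L4.8} apply verbatim, giving $L^p(K)$-convergence uniformly in the base point. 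Without this (or an equivalent device making the Friedrichs estimates independent of $k$), your argument for $\Vert B_k\Vert_{L^p(K)}\to 0$ is not complete.
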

\begin{proof} 
As in the proof of \Cref{Pr:(i) for our Hawking theorem} we see 
that it suffices to show \eqref{eq:neg_ricci_claim} when replacing $g_k$ by $g_{\eps_k}$, which we will tacitly do below in order to refer to results in Section \ref{sec:regularisation}. 

Let $V$ be a relatively compact open neighbourhood of $K$. Without loss of generality
we may assume that on an open neighbourhood $U$ of $\Bar{V}$ there exists a smooth $h$-orthonormal frame $F_1,\dots,F_n \in \mathfrak{X}(U)$ ($h$ a smooth Riemannian background metric). Fix any $\Bar{x}\in K$ and write
\[
X_k(\Bar{x}) = \sum_{j=1}^n \alpha_j^k F_j(\Bar{x})
\]
$(\alpha_1^k,\dots, \alpha_n^k \in \R)$. Now define $\Bar{X}_k\in \mathfrak{X}(U)$ by 
\begin{equation}\label{eq:barXdef}
\Bar{X}_k = \sum_{j=1}^n \alpha_j^k F_j \qquad (k=1,\dots,n).
\end{equation}
Since the $X_k$ are uniformly bounded on $\Bar{V}$, there exists some $C_K$ independent of
$\Bar{x}$ such that 
\begin{equation}\label{eq:indep_bound}
|\alpha_j^k| \le C_K \qquad (k\in \N,\ l=1,\dots, n).
\end{equation}
Let $c<0$ be such that $g(X_k,X_k)<c$ on $\Bar{V}$ for each $k\in \N$. Using the local Lipschitz property of $g$ (as well as that of the $F_j$) together with \eqref{eq:indep_bound}, we may without loss of generality  assume that the diameter of $\Bar{V}$ is so small
\footnote{Clearly it suffices to establish the result for any compact subset of $K$ of small diameter.} 
that 
\[
|g_{\Bar{x}}(\Bar{X}_k(\Bar{x}),\Bar{X}_k(\Bar{x}))) - g_x(\Bar{X}_k(x),\Bar{X}_k(x))| < \frac{|c|}{2},
\]
hence $g_x(\Bar{X}_k(x),\Bar{X}_k(x)) < c/2$ for any $x\in \Bar{V}$ and each $k$. Thus each $\Bar{X}_k$
is $g$-timelike on all of $V$. Moreover, this property holds irrespective of the original choice of $\Bar{x}$.

Consider now the following decomposition:
\begin{equation}\label{eq:the_decomposition}
\begin{split}
\Ric[g_k](\Bar{X}_k,\Bar{X}_k) - \rho g_k(\Bar{X}_k,&\Bar{X}_k) = [\Ric[g_k] - (\Ric[g]\star_M \rho_k)](\Bar{X}_k,\Bar{X}_k) \\
&+[(\Ric[g]\star_M \rho_k)(\Bar{X}_k,\Bar{X}_k) - \big(\Ric[g](\Bar{X}_k,\Bar{X}_k)\big)\star_M\rho_k]\\
&+[\Ric[g](\Bar{X}_k,\Bar{X}_k) - \rho g(\Bar{X}_k,\Bar{X}_k)]\star_M\rho_k \\
&+ [\rho g(\Bar{X}_k,\Bar{X}_k)\star_M\rho_k - \rho g_k(\Bar{X}_k,\Bar{X}_k)]\\ 
&=: 
A_k(\Bar{X}_k,\Bar{X}_k) + B_k(\Bar{X}_k,\Bar{X}_k) + C_k(\Bar{X}_k,\Bar{X}_k) +D_k(\Bar{X}_k,\Bar{X}_k).
\end{split}
\end{equation}
Since $\Bar{X}_k$ is $g$-timelike on $V$ and
$\rho \ge 0$, we have $C_k(\Bar{X}_k,\Bar{X}_k)(\Bar{x}) \ge 0$ for all $k$ large (depending only on the distance from $K$ to $\partial V$). 
Keeping in mind that
\[
(\Ric[g_k](\Bar{X}_k,\Bar{X}_k) - \rho g_k(\Bar{X}_k,\Bar{X}_k))(\Bar{x})
= (\Ric[g_k](X_k,X_k) - \rho g_k(X_k,X_k))(\Bar{x}),
\]
we therefore obtain
\begin{equation}\label{eq:Ricci-split}
\begin{split}
[\Ric[g_k](X_k,X_k) - \rho g_k(X_k,&X_k)]_-(\Bar{x})\\
& \le
|A_k(\Bar{X}_k,\Bar{X}_k)(\Bar{x})| + |B_k(\Bar{X}_k,\Bar{X}_k)(\Bar{x})| + |D_k(\Bar{X}_k,\Bar{X}_k)(\Bar{x})|.
\end{split}    
\end{equation}
Set $\hat A_k := \max\{|A_k(F_j,F_l)| \colon j,l = 1,\dots,n \}$, and analogously define $\hat B_k$ and
$\hat D_k$. Since the coefficients in \eqref{eq:barXdef} are constant, from \eqref{eq:indep_bound} and \eqref{eq:Ricci-split} we conclude that
there exists a constant $\tilde C_K>0$ independent of $\Bar{x}$ such that
\[
[\Ric[g_k](X_k,X_k) - \rho g_k(X_k,X_k)]_-(\Bar{x}) \le \tilde C_K (\hat A_k + \hat B_k + \hat D_k)(\Bar{x}).
\]
This pointwise estimate on $K$ reduces our task to showing $L^p(K)$-convergence to $0$ of $\hat A_k$, $\hat B_k$,
and $\hat{D_k}$. Indeed, for $\hat A_k$ this follows from \Cref{Pr:Ricci Lp-convergence}(i) and for $\hat B_k$ 
it holds due to \Cref{Le:Box2 of L4.8}. Finally,
$\hat D_k\to 0$ uniformly on $K$.  
\end{proof}

We are now ready to prove our first main result, \Cref{Th: Hawking gh case}.

\begin{proof}[Proof of \Cref{Th: Hawking gh case}]

 Let 
\begin{equation}\label{eq:alpha-cases}
     \alpha\equiv \alpha(\beta,\rho):=  \begin{cases}
        \frac{n-1}{\abs{\beta}} &\text{if} \;\rho=0\\
         \frac{1}{\sqrt{|\rho|}}\coth^{-1}\Big(\frac{|\beta|}{(n-1)\sqrt{|\rho|}}\Big) &\text{if} \;\rho<0 \\
         \frac{1}{\sqrt{\rho}}\cot^{-1}\Big(\frac{|\beta|}{(n-1)\sqrt{\rho}}\Big) &\text{if} \;\rho>0
         \end{cases}
\end{equation}
 and assume $|\beta|>(n-1)\sqrt{|\rho|}$ if $\rho\ne 0$. We have to show that $\tau_\Sigma\leq \alpha$.

To this end we suppose by contradiction that there is $\hat q\in J^+(\Sigma)$ with $\tau_\Sigma(\hat q)>\alpha+3\delta$ for some $\delta>0$. Choose some $q_0\in I^-(\hat q)$ with $\alpha+\delta<\tau_\Sigma(q_0)<\alpha+2\delta$. Then there is an open and relatively compact neighbourhood $U$ of $q_0$ in $J^-(\hat q)$ such that for all $q\in \overline{U}$
\begin{equation}
    \alpha+\delta<\tau_\Sigma(q)<\alpha+2\delta.
\end{equation}
Now we start using the regularisations $g_k$ of $g$. By Lemma \ref{Le:uniform convergence of tau} $\tau_{\Sigma,k}\rightarrow\tau_\Sigma$ uniformly on $\overline{U}$ and so there is $k_0$ such that for all $k\geq k_0$ and for all $q\in U$
\begin{equation}
    \alpha+\delta<\tau_{\Sigma,k}(q)<\alpha+2\delta.
\end{equation}
Furthermore, since $\Sigma$ is a Cauchy surface for the smooth metric $g_k$,
there are base points $p_k^q\in \Sigma\cap J^-(\hat q)$ and $g_k$-maximising geodesics $\gamma^q_k$ from $p^q_k$ to $q$. 
Define 
\begin{equation}
    B_k:=\{p^q_k|\ q\in U\}\subseteq J^-(\hat q)\cap\Sigma=:N
\end{equation}
and note that $N$ is compact by Lemma \ref{Le:properties of sigma}(i). 

Consider now the initial parts of the
$g_k$-unit speed geodesics that start $g_k$-orthogonally from $N$
for as long as they stay within $L:=J^-(\hat q)\cap J^+(\Sigma)$, which is compact also by Lemma \ref{Le:properties of sigma}(i). Since these curves are $g$-timelike and $L$ is compact, their $h$-lengths are uniformly bounded, cf., e.g., \cite[Lem.\ 2.1, and proof of Lem.\ 2.2]{GLS:18}. Moreover, the $C^{0,1}$-norms of the $g_k$
are uniformly bounded on $L$. It therefore follows from \cite[Prop.\ 1.4]{LLS:21} that the $C^{1,1}$-norms of these geodesics are uniformly bounded on $L$. From this and the fact that $g_k\to g$ uniformly on $L$ it follows that the $^kU$, defined as in \eqref{eq:Up} but for $g_k$, are uniformly bounded and $g$-uniformly timelike on $L$.

By definition we also have that (for the notation see Section \ref{sec:vc})
\begin{equation}\label{eq:Bk-inclusion}
B_k\subseteq\ {^k}\mathrm{Reg}^+_\delta(\alpha)
\end{equation}and we also clearly have that $\vol_{\Sigma,k}(B_k)$ is finite. Next we show that it is also positive. 
More precisely, we will show that there is a constant $D>0$ such that for $k$ large we have
\begin{equation}\label{eq:vol-pos}
    \vol_{\Sigma,k}(B_k)\geq D. 
\end{equation}
To this end, first note that for the set  ${}^k\Omega^+_{\alpha+2\delta}(B_k)$ (cf.\ \eqref{eq:Omega_T_+}) we have for
large $k$
    \begin{equation}
        \begin{split}\label{Eq:upper area}
            \mathrm{vol}_k({}^k\Omega^+_{\alpha+2\delta}(B_k)\cap L)\geq\mathrm{vol}_k(U)>\frac{1}{2}\mathrm{vol}(U)>0
        \end{split}
    \end{equation}
(where we took into account that the cut locus $\mathrm{Cut}^k_\Sigma$ is a set of measure zero).

Next we want to estimate $ \vol_{\Sigma,k}(B_k)$ from below in terms of $\vol_k(\Omega^+_{\alpha+2\delta}(B_k)\cap L)$ to establish \eqref{eq:vol-pos}. To this end we use assumptions (i) and (ii) to give us appropriate curvature bounds needed to apply \cite[Lem.\ 3.2 and Rem.\ 3.3]{GKOS:22}. First, the distributional strong energy condition (i) together with Proposition  \ref{Pr:(i) for our Hawking theorem} and the fact that the $^kU$ are uniformly bounded and uniformly $g$-timelike 
implies the existence of some $k_0$ and some $\kappa<0$ such that, for all $k\ge k_0$ we have 
\begin{equation}\label{eq:rbb}
     \Ric[g_k](^kU,{}^kU)\geq  \kappa
\end{equation}
on $L$.
Furthermore, assumption (ii) in conjunction with \Cref{Le: ConvMeanCurv} implies that, for large $k$,
\begin{align}\label{Eq:FINAL2meancurv}
H[g_k]{|}_N<\beta<0, 
\end{align}
where $H[g_k]$ is the mean curvature of $\Sigma$ with respect to the smooth metric $g_k$. 

Now, writing the volume measure of $g_k$ in a normal exponential chart 
as $\vol_k=\mathcal{A}_k\,\dint t\otimes \dint\sigma_k$ with $\sigma_k$ the Riemannian measure induced by $g_k$ on $\Sigma$,  we have upon choosing coordinates on $\Sigma$ that $\mathcal{A}_k(t,x)={\sqrt{\det g_k(t,x)}}/{\sqrt{\det g_k(0,x)}}$, cf.\ \cite[Rem. 2.8]{GKOS:22}. By increasing the modulus of $\kappa$ in \eqref{eq:rbb} if necessary we may assume that $\beta\geq-(n-1)\sqrt{|\kappa|}$. 
Finally, denoting by $\gamma_x^k$ the $g_k$-unit speed geodesic starting $g_k$-orthogonally from $x\in N$, let ${}^kf_L(x) := \sup\{s\in [0,\infty) \colon \gamma^k_x([0,s])\subseteq L\}$.

Then by \cite[Lem.\ 3.2 and Rem.\ 3.3]{GKOS:22} and noting that the proofs of these results rely exclusively on the bounds \eqref{eq:rbb}, \eqref{Eq:FINAL2meancurv}, 
we obtain 
\begin{equation}
        \begin{split}\label{Eq:lower area}
            \mathrm{vol}_k({}^k\Omega^+_{\alpha+2\delta}(B_k)\cap L)& 
            =\int_{B_k}\int_{0}^{\min(\alpha+2\delta,{}^k c^+_\Sigma(x),{}^kf_L(x))}\mathcal{A}_{k}(t,x) 
            \,\dint t\,\dint\sigma_k \\
            & \le C \int_{0}^{\alpha+2\delta}\int_{B_k}\, \dint\sigma_k\,\dint t
            =C\cdot (\alpha+2\delta)\cdot\mathrm{vol}_{\Sigma,k}(B_k),
            \end{split}
    \end{equation}
    with $C=C(n,\kappa,\beta,\alpha+2\delta)$.
    We then conclude \eqref{eq:vol-pos} by combining \eqref{Eq:upper area} and \eqref{Eq:lower area}.
\medskip
  
At this point we want to apply Theorem \ref{thm:sat} to $(M,g_k)$ for $B_k$ and the set  ${^k}\Omega^+_{\alpha+\delta/2}(B_k)$ for large $k$. 
To this end, we first note that, again, inspection of the proof of that result in \cite{GKOS:22} shows that the Ricci- resp.\ mean curvature bounds assumed globally in its formulation  can  in fact be replaced by \eqref{eq:rbb}, \eqref{Eq:FINAL2meancurv} in our setting (as long as $\smash{{^k}\Omega^+_{T+\eta}(B_k)\subseteq L}$ in the notation of Theorem \ref{thm:sat}).

Next, note that ${^k}\Omega^+_{\alpha+\delta}(B_k)\subseteq L$, where \eqref{eq:rbb} and \eqref{Eq:FINAL2meancurv} hold with constants $\kappa$ and $\beta$  satisfying $\smash{\beta\geq-(n-1)\sqrt{|\kappa|}}$. Next we choose $\eta:=\delta/2$, and $T:=\alpha+\eta$. 
Since $K(\beta,T,\rho)$ from Theorem \ref{thm:sat} is strictly increasing in $T$ we have that $K:=K(\beta,T,\rho)>K(\beta,\alpha(\beta,\rho),\rho)=0$. In case $\rho>0$ we may assume $\delta$ is small enough such that $\sqrt{\rho}T<\frac{\pi}{2}$.

Therefore (i), together with 
Lemma \ref{Lem:(iv) of our Hawking theorem} implies that the $L^1$-norm of the negative part of $\Ric[g_k]$ on the geodesic tangents $^kU$ goes to zero, so for large $k$ we obtain
\begin{align}
    \frac{1}{\mathrm{vol}_{\Sigma,k}(B_k)} \int_{{^k}\Omega^+_{\alpha+\eta}(B_k)} \Big(\Ric[g_k](^k U_p,\,^k U_p)-(n-1)\rho \Big)_{-}\,\dint\mathrm{vol}_k(p)
    < C^{A-}(n,\kappa,\eta,T)\, K.
\end{align}
But now Theorem \ref{thm:sat} gives $B_k\not\subseteq{^k}\mathrm{Reg}^+_{\delta/2}(\alpha+\delta/2)={^k}\mathrm{Reg}^+_{\delta}(\alpha)$, which contradicts \eqref{eq:Bk-inclusion}.
\end{proof}

    \begin{rem}[Comparison with synthetic versions of the Hawking singularity theorem]     
    In \cite[Thm.\ 5.6]{CM:20}, a synthetic Hawking singularity theorem was proved for globally hyperbolic timelike non-branching Lorentzian geodesic spaces satisfying the strong energy condition in the sense of the timelike measure contraction property $\tmcp^e(K,N)$. To compare this result, first, to the $C^1$-version of \cite[Thm.\ 4.13]{Gra:20}, we note that although \Cref{lemma:eq_meancurvature} remains true also for $C^1$-metrics, the question about what the right-hand side of \eqref{Eq: halpha} represents still remains. In addition, we need to compare the ways in which the strong energy condition is modelled in each case. In this regularity, it was shown in \cite{BC:22} that distributional Ricci bounds for $C^1$-metrics indeed imply
    the bounds used in \cite[Thm.\ 5.6]{CM:20}.

    In the Lipschitz case treated in the current work, apart from what was said on mean curvature bounds in \Cref{Rem: mean curv comp}, even the compatibility of the notions of timelike Ricci curvature bounds is still open. 
    The situation is better in the Riemannian context, where the problem was first studied for the $C^1$-case in \cite{KOV:22}, and more recently compatibility was shown in \cite{MR:24} even down to metrics of regularity $C^0\cap W^{1,2}_{\mathrm{loc}}$.  
    
    Another very recent synthetic Hawking singularity theorem was shown in \cite[Theorem A.7]{BMcC:23}. Here, the authors use the same version of mean curvature bounds as in \cite{CM:20}, and also implement the strong energy condition via the timelike measure contraction property $\tmcp^e(K,N)$. Thus the same compatibility statements as 
    detailed above for \cite{CM:20} apply here as well.
    The Hawking theorem itself is then based on integral curvature bounds. Even in the weighted smooth case, this is a new result.

    Note that, in any case, since both \cite[Thm.\ 5.6]{CM:20} and \cite[Thm.\ A.7]{BMcC:23} assume an (essential) non-branching condition, even if the curvature bounds turn out to be equivalent in all cases, the synthetic versions of the theorem will not imply our results since the latter do not rely on any non-branching condition.
   \end{rem}
\subsection{The non-globally hyperbolic case}

In this final section we are going to state and prove the extension of \cite[Thm.\ 14.55B]{ONe:83} to Lipschitz spacetimes. 
Beforehand we establish that maximisers emanating from a spacelike hypersurface $\Sigma$ start orthogonal to it, cf.\ \cite[Rem.\ 6.6(ii)]{GGKS:18} for the null case and $g\in C^{1,1}$. 

\begin{lem}[Maximisers  start orthogonally from $\Sigma$]\label{Lem: max to sigma}
  Let $(M,g)$ be a spacetime with locally Lipschitz metric tensor $g$ and let $\Sigma$ be a smooth spacelike hypersurface. Then any maximiser emanating from $\Sigma$ starts orthogonally to $\Sigma$.
\end{lem}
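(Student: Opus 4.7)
By \Cref{rem:geomax}(ii), any maximiser is either timelike or null throughout; since $\Sigma$ is spacelike, no null vector at a point $p\in\Sigma$ can be $g$-orthogonal to the entire tangent space $T_p\Sigma$, so the statement is only non-trivial for timelike maximisers, to which I restrict henceforth. Write $p:=\gamma(0)$ and $b:=L_g(\gamma)$, and reparametrise $\gamma$ by $g$-arclength so that $\gamma\in C^{1,1}$ with $g(\dot\gamma,\dot\gamma)\equiv-1$. Argue by contradiction: suppose there is some $v\in T_p\Sigma$ with $g(v,\dot\gamma(0))\ne 0$, and pick a smooth vector field $W\in\mathfrak X(M)$ with $W(p)=v$, $W(q)=0$, and $W|_\Sigma$ tangent to $\Sigma$. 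If $\phi_s$ denotes the flow of $W$, then $\alpha_s:=\phi_s\circ\gamma$ is a smooth family of timelike curves from $\Sigma$ to $q$, and maximality of $\gamma$ forces $L_g(\alpha_s)\le L_g(\gamma)$ for all small~$s$.

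To compute the derivative of $s\mapsto L_g(\alpha_s)$ at $s=0$ in this low regularity, I approximate with $\hat g_k:=\hat g_{\eps_k}$ from \Cref{Le:approximating metrics}; since $\hat g_k$ has strictly wider cones than $g$, the curve $\gamma$ is $\hat g_k$-timelike for all large $k$. The standard smooth first-variation formula for $\hat g_k$, after integration by parts along $\gamma$ using $W(q)=0$, reads
\[
\frac{d}{ds}\bigg|_{s=0}L_{\hat g_k}(\alpha_s)
=\frac{\hat g_k(v,\dot\gamma(0))}{|\dot\gamma(0)|_{\hat g_k}}
+\int_0^b\frac{\hat g_k\bigl(W\circ\gamma,\,\ddot\gamma+\Gamma^{\hat g_k}(\dot\gamma,\dot\gamma)\bigr)}{|\dot\gamma|_{\hat g_k}}\,dt,
\]
the correction terms from differentiating $|\dot\gamma|_{\hat g_k}^{-1}$ being negligible in the limit because $g(\dot\gamma,\dot\gamma)\equiv-1$ forces $g(\ddot\gamma+\Gamma^g(\dot\gamma,\dot\gamma),\dot\gamma)=0$ a.e. Here the Lipschitz regularity of $g$ is crucial: the Christoffels $\Gamma^{\hat g_k}$ are uniformly bounded in $L^\infty_\loc$ and converge to $\Gamma^g$ in every $L^p_\loc$, while $\hat g_k\to g$ uniformly on compacta. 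Dominated convergence gives the limit of the right-hand side; applying dominated convergence once more to the integrated identity $L_{\hat g_k}(\alpha_s)-L_{\hat g_k}(\gamma)=\int_0^s\frac{d}{d\sigma}L_{\hat g_k}(\alpha_\sigma)\,d\sigma$, whose integrand is uniformly bounded in $k$ on compact $\sigma$-intervals, shows that $s\mapsto L_g(\alpha_s)$ is differentiable at $s=0$ with
\[
\frac{d}{ds}\bigg|_{s=0}L_g(\alpha_s)=g(v,\dot\gamma(0))+\int_0^b g\bigl(W\circ\gamma,\,\ddot\gamma+\Gamma^g(\dot\gamma,\dot\gamma)\bigr)\,dt,
\]
where $\ddot\gamma+\Gamma^g(\dot\gamma,\dot\gamma)$ is understood pointwise almost everywhere along $\gamma$.

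Maximality of $\gamma$ together with the symmetry $W\leftrightarrow-W$ forces this derivative to vanish for every admissible $W$. Applying this first to variations with $W(p)=W(q)=0$ removes the boundary term, and the fundamental lemma of the calculus of variations, combined with the nondegeneracy of $g^{-1}$, yields $\ddot\gamma+\Gamma^g(\dot\gamma,\dot\gamma)=0$ a.e.\ along $\gamma$. Substituting back into the first variation for arbitrary admissible $W$ with $W(p)=v$ then gives $g(v,\dot\gamma(0))=0$ for every $v\in T_p\Sigma$, contradicting the assumption. The main obstacle is the justification of the first variation formula in the Lipschitz setting: derivatives of $g$ do not converge uniformly, but the $L^\infty_\loc$-bound on $\partial g$ (immediate from $g\in C^{0,1}$) together with the $L^p_\loc$-convergence of the Christoffels is precisely what makes all the above dominated convergence arguments go through.
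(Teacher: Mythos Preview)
Your proposal has a genuine gap. You claim that the first-variation formula along the fixed curve $\gamma$ passes to the limit via dominated convergence because $\Gamma^{\hat g_k}\to\Gamma^g$ in $L^p_\loc(M)$. But this convergence takes place on the $n$-dimensional manifold, whereas you are integrating along the one-dimensional image $\gamma([0,b])$, which has measure zero in $M$. Consequently $\Gamma^{\hat g_k}\circ\gamma$ need not converge---pointwise a.e.\ in $t$, or in any $L^p([0,b])$---to anything at all, and in fact the symbol $\Gamma^g\circ\gamma$ is not even well-defined: $\Gamma^g$ is only an $L^\infty_\loc$-equivalence class on $M$, and two representatives can differ arbitrarily on the null set $\gamma([0,b])$. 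The same objection applies to the ``correction terms'' you dismiss and to the expression $\ddot\gamma+\Gamma^g(\dot\gamma,\dot\gamma)$, so neither your limiting first-variation identity nor the subsequent application of the fundamental lemma is justified. (Relatedly, $s\mapsto L_g(\alpha_s)$ is Lipschitz, but nothing forces it to be differentiable at the particular value $s=0$.)

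The paper avoids precisely this difficulty by never differentiating $L_g$. Working in a single chart with $p=0$, it takes $y\in T_p\Sigma$ with $g(y,\dot\gamma(0))>0$, a curve $\alpha$ in $\Sigma$ with $\dot\alpha(0)=y$, and writes down the explicit variation $\sigma(t,s)=\gamma(t)+(1-t/t_0)\alpha(s)$. It then compares $g_{\sigma(t,s)}(\partial_t\sigma,\partial_t\sigma)$ with $g_{\gamma(t)}(\dot\gamma,\dot\gamma)$ directly. The key point is that only the zeroth-order Lipschitz bound $|g(\sigma(t,s))-g(\gamma(t))|\le C\,s\,(1-t/t_0)$ enters---never a derivative of $g$---so no restriction of an $L^\infty$ object to a null set is needed. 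Choosing $t_0$ small makes the favourable term $-2c\,s/t_0$ dominate the Lipschitz error, yielding a strictly longer timelike competitor from $\Sigma$ to $\gamma(t_0)$ for each small $s>0$ and contradicting maximality.
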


\begin{proof} By Remark \ref{rem:geomax}(ii), any maximiser has a parametrization as a $C^{1,1}$-curve.
    Let $\gamma_v:[0,1]\rightarrow M$ be such a curve with $\gamma_v(0)=p\in\Sigma$ and $\dot\gamma_v(0)=v$. We suppose by contradiction that $v\not\in T_p\Sigma^\perp$ and construct a variation of $\gamma_v$ which gives a longer curve from $\Sigma$ to $q  =\gamma_v(1)$.
    
    Since $\gamma_v\not\perp\Sigma$ 
    there is $y\in T_p\Sigma$ with $g(y,v)>0$. Let $\alpha:[0,b]\rightarrow\Sigma$ be a $C^2$-curve with $\alpha(0)=p$ and $\dot\alpha(0)=y$. Since this is a local issue we may assume $M=\R^n$ and $\alpha(0)=p=\gamma_v(0)=0$. 
    Now we define a variation of $\gamma_v$ by $\sigma:[0,t_0]\times[0,s_0]\rightarrow\R^n$, 
    \begin{align}
        \sigma(t,s)=\gamma_v(t)+\left(1-\frac{t}{t_0}\right)\alpha(s),
    \end{align}
    where we have chosen $t_0$ and $s_0$ so small that
    \begin{align}\label{Eq:(star)}
    \smash{\left\langle y,\dot\gamma_v(t)\right\rangle_{\sigma(s,t)}>c>0}\qquad\forall(t,s)\in[0,t_0]\times[0,s_0],
    \end{align}
    
    To show that $\sigma(\cdot,s):[0,t_0]\rightarrow\R^n$ is a longer curve from $\alpha(s)\in\Sigma$ to $\sigma(t_0,s)=\gamma_v(t_0)$ for $s,t_0$ small enough, we first Taylor expand $\alpha$ to obtain $\alpha(s)=sy+O(s^2)$. Then we compute 
    \begin{equation}
        \begin{aligned}\label{Eq:(starstar)}
            \vert g(\sigma(t,s))-\smash{g(\underbrace{\gamma_v(t)}_{\sigma(t,0)})}\vert&\leq\Lip(g)\ \vert\sigma(t,s)-\sigma(t,0)\vert\\
            &\leq\Lip(g)\left(1-\frac{t}{t_0}\right)\,|\alpha(s)|\leq C\cdot s\left(1-\frac{t}{t_0}\right),
        \end{aligned}
    \end{equation}
 for $s$ small enough. Moreover, we have $\smash{\partial_t\sigma(t,s)=\dot\gamma_v(t)-\tfrac{1}{t_0}\alpha(s)=\dot\gamma_v(t)-\tfrac{s}{t_0}y+O(s^2)}$ and so
    \begin{equation}
        \begin{aligned}
            \left\langle\partial_t\sigma(t,s),\partial_t\sigma(t,s)\right\rangle_{\sigma(t,s)}&=\left\langle\dot\gamma_v(t),\dot\gamma_v(t)\right\rangle_{\sigma(t,s)}-2\ \frac{s}{t_0}\left\langle\dot\gamma_v(t),y\right\rangle_{\sigma(t,s)}+O(s^2)\\
            &\leq\left\langle\dot\gamma_v(t),\dot\gamma_v(t)\right\rangle_{\gamma_v(t)}\ +\  
            s\left(C\Big(1-\frac{t}{t_0}\Big)-2\ \frac{c}{t_0}\right) +O(s^2).
        \end{aligned}
    \end{equation}
    Now for $s,t_0$ small enough the sum of the trailing terms on the right hand side becomes negative and so
    $L(\sigma(\cdot,s))>L(\gamma_v\vert_{[0,t_0]})$. 
  \end{proof}

Our second main result can now be stated and proven:
\begin{thm}[$C^{0,1}$-Hawking singularity theorem, II]\label{Thm: Hawking non globally hyperbolic}
    Let $(M,g)$ be a spacetime with locally Lipschitz metric tensor $g$ such that:
    \begin{enumerate}
        \item There exists  
        $\rho\in \R$
        such that $\Ric_g(X,X)\geq -(n-1)\rho\, g(X,X)$ in the distributional sense for all timelike $X\in\mathfrak{X}(M)$.
        \item There is a smooth compact spacelike hypersurface $\Sigma$ with $\mathcal{H}[g]< \beta<0$, where in case $\rho<0$ we assume  $|\beta|>(n-1)\sqrt{|\rho|}$.
    \end{enumerate}
  Then there exists a timelike future directed geodesic $\gamma$ in the sense of Filippov emanating orthogonally from $\Sigma$ which is incomplete. More precisely, the length of this $\gamma$ is bounded above by the right-hand side of \eqref{eq:Hawking-bounds-cases}.
\end{thm}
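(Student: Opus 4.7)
The strategy is to reduce to the globally hyperbolic case established in \Cref{Th: Hawking gh case} by restricting to the Cauchy development of $\Sigma$, and then to extract the required orthogonal incomplete Filippov geodesic via compactness of $\Sigma$ together with the orthogonality result \Cref{Lem: max to sigma}. To begin, I would perform the standard localization: since $\Sigma$ is compact and spacelike, hence locally achronal, by passing to a suitable covering of $M$ (as in \cite[Lem.\ 14.48]{ONe:83}) one may assume $\Sigma$ is achronal. The hypotheses lift isometrically to the cover, and any incomplete orthogonal Filippov geodesic in the cover descends to one in $M$. Then $D(\Sigma)^\circ$ is open, globally hyperbolic, and has $\Sigma$ as a Cauchy hypersurface, so \Cref{Th: Hawking gh case} applied to $(D(\Sigma)^\circ,\,g|_{D(\Sigma)^\circ})$ yields $\tau_\Sigma \leq \alpha(\beta,\rho) =: \alpha$ throughout $D(\Sigma)^\circ$.

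To construct the geodesic, set $L := \sup_{D(\Sigma)^\circ} \tau_\Sigma \leq \alpha$ and pick $q_n \in D(\Sigma)^\circ \cap I^+(\Sigma)$ with $\tau_\Sigma(q_n) \to L$. The Avez--Seifert property for globally hyperbolic continuous spacetimes (cf.\ \Cref{Le:properties of sigma}(ii) applied within $D(\Sigma)^\circ$) furnishes maximisers $\gamma_n\colon[0,L_n]\to D(\Sigma)^\circ$ from base points $p_n \in \Sigma$ to $q_n$, of length $L_n = \tau_\Sigma(q_n) \to L$. By \Cref{rem:geomax}(ii) and \Cref{Lem: max to sigma}, each $\gamma_n$ is a timelike $C^{1,1}$ Filippov geodesic (parametrised by $g$-arclength) starting orthogonally to $\Sigma$. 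Using compactness of $\Sigma$ to extract $p_n \to p_\infty$, and the uniform $C^{1,1}$ bound for timelike unit-speed maximisers from \cite[Prop.\ 1.4]{LLS:21}, a subsequence converges in $C^1_{\mathrm{loc}}$ to a $g$-Filippov geodesic $\gamma_\infty\colon[0,L]\to M$ starting orthogonally at $p_\infty$, of length $L \leq \alpha$.

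Finally, I would show that $\gamma_\infty$ cannot be extended as a future-directed timelike $g$-Filippov geodesic past parameter $L$. Suppose for contradiction that an extension $\tilde\gamma_\infty \colon [0, L+\varepsilon] \to M$ exists. Since $D(\Sigma)^\circ$ is open and $\gamma_n(L_n - \delta)$ lies inside it and converges to $\gamma_\infty(L-\delta) \in D(\Sigma)^\circ$ for any small $\delta>0$, a diagonal choice of $\delta$ and $t-L$ small then places $\tilde\gamma_\infty(t) \in D(\Sigma)^\circ$; but then $\tilde\gamma_\infty|_{[0,t]}$ is a timelike curve from $\Sigma$ of length $t > L$, contradicting $\tau_\Sigma \leq L$ on $D(\Sigma)^\circ$. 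The principal obstacle is exactly this last step: if $\gamma_\infty(L)$ happens to lie on the Cauchy horizon $H^+(\Sigma)$, a forward extension may exit $D(\Sigma)^\circ$ immediately and the diagonal argument breaks down. Handling this delicate case likely requires a fallback to the smooth approximations $g_k$ of \Cref{Le:approximating metrics}: the classical focusing argument applied to $g_k$-unit normal geodesics---using the Ricci bounds of \Cref{Pr:(i) for our Hawking theorem} and the mean-curvature bounds of \Cref{Le: ConvMeanCurv}---produces an incomplete $g_k$-orthogonal geodesic at each level, and passing to the limit via uniform $C^{1,1}$ bounds yields the desired $g$-Filippov geodesic.
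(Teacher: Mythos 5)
Your reduction steps (covering argument to make $\Sigma$ acausal, then applying \Cref{Th: Hawking gh case} on the globally hyperbolic interior of the Cauchy development to get $\tau_\Sigma\le\alpha(\beta,\rho)$ there) coincide with the paper's, but the core of the argument is missing, and you in fact name the missing piece yourself: the case where the relevant maximisers run into the Cauchy horizon $H^+_g(\Sigma)$. Your construction (take $q_n$ with $\tau_\Sigma(q_n)\to\sup_{D(\Sigma)^\circ}\tau_\Sigma$, pass to a limit geodesic $\gamma_\infty$, and argue it cannot be extended) only yields a contradiction if the extension's endpoints stay inside $D(\Sigma)^\circ$, where the bound on $\tau_\Sigma$ is available; once the extension crosses the horizon, \Cref{Th: Hawking gh case} says nothing, so no incompleteness follows. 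This is precisely the nontrivial case ($H^+_g(\Sigma)\neq\emptyset$), and the paper resolves it by a quite different, O'Neill-style contradiction argument: under the indirect assumption that every orthogonal Filippov geodesic is longer than the bound, it shows (via Filippov's compactness of solution funnels and \cite[Thm.\ 2.35]{Min:19a}) that $H^+_g(\Sigma)$ is compact; then, in the technically hardest step, that every point of $H^+_g(\Sigma)$ is reached by a $\Sigma$-maximiser (this needs the inner approximations $g_k\prec g$, the limit curve theorem of \cite{Min:08a}, and the convergence $\tau_{\Sigma,k}\to\tau_\Sigma$ of \Cref{Le:uniform convergence of tau}); and finally that $\tau_\Sigma$ is strictly decreasing along past-directed null generators of the horizon (using \cite{GL:18}), so that the minimum of $\tau_\Sigma$ on the compact horizon gives a contradiction. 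None of this structure appears in your proposal, and the limit-extraction step you do perform also needs an argument (absent in the proposal) that the maximisers $\gamma_n$ stay in a fixed compact set before \cite[Prop.\ 1.4]{LLS:21} and Arzel\`a--Ascoli can be invoked.

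The fallback you suggest for the horizon case does not close the gap. Applying ``the classical focusing argument'' to the smooth metrics $g_k$ is not available as stated: the curvature and mean-curvature control obtained in \Cref{Pr:(i) for our Hawking theorem} and \Cref{Le: ConvMeanCurv} is local (on compact sets, for uniformly bounded and uniformly timelike directions, and on a neighbourhood of a relatively compact piece of $\Sigma$), so the hypotheses of the classical non-globally-hyperbolic Hawking theorem for $(M,g_k)$ are not verified; moreover there is no reason for $(M,g_k)$ to be geodesically incomplete at all --- the smooth approximations may be complete even when $g$ is not --- and even if one had an incomplete $g_k$-geodesic for each $k$, incompleteness is not a property that survives a $C^1_{\mathrm{loc}}$ limit (lengths can blow up, limits can be extendible). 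So the proposal, as it stands, proves the theorem only in the case $H^+_g(\Sigma)=\emptyset$, which is exactly the case the paper dispatches in one line before the real work begins.
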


\begin{proof}
    Similar to the classical proof (cf.\ \cite[Thm.\ 14.55B]{ONe:83}) we employ Theorem \ref{Th: Hawking gh case} to establish the result. More precisely, our arguments rely on Theorem \ref{Th: Hawking gh case} only via the bound on $\tau_\Sigma$ on the interior of $D_g(\Sigma)$. It will therefore suffice to treat the case $\rho=0$, with the understanding that the other cases 
    follow entirely analogously, simply replacing $\smash{\tfrac{n-1}{\abs{\beta}}}$ by the appropriate bound on the right hand side of \eqref{eq:Hawking-bounds-cases}.
    
    Thus, supposing that $\rho=0$, we are going to show that there exists an inextendible future directed timelike Filippov geodesic $\gamma$ starting orthogonally from $\Sigma$ with length bounded above by $\smash{\tfrac{n-1}{\abs{\beta}}}$.

  To begin with, the covering argument given in \cite[Prop.\ 14.48]{ONe:83} allows us also in the present case to assume, without loss of generality, that $\Sigma$ is acausal and connected. By \cite[Thm.\ 5.7]{Sae:16} the interior of the Cauchy development $\mathrm{int}(D_g(\Sigma))$ is globally hyperbolic and we may apply \Cref{Th: Hawking gh case} to see that $\smash{\tau_\Sigma\leq\tfrac{n-1}{\abs{\beta}}}$ on this set.
 
  If the future Cauchy horizon $H^+_g(\Sigma)=\emptyset$ then by \cite[Thm.\ 2.35]{Min:19a} we obtain that $I^+(\Sigma) \subseteq D^+(\Sigma)$, so the
  conclusion follows from \Cref{Th: Hawking gh case}.  Thus from now on we assume that 
  \begin{align}
  H^+_g(\Sigma)\neq\emptyset.
  \end{align}
 
We make the \textbf{indirect assumption} that 
every inextendible timelike future directed  Filippov geodesic starting perpendicularly from $\Sigma$ 
has length strictly greater than $\smash{\tfrac{n-1}{\abs{\beta}}}$.
  
  \textbf{Step 1.} \emph{Compactness of the horizon.} The set $B$ of initial conditions for these geodesics is given by $\{\vec{n}_p\mid p\in\Sigma\}$ with $\vec{n}$ the future directed unit normal to $\Sigma$, hence is compact. Since by assumption all such $\gamma$ exist at least on $\smash{[0,\tfrac{n-1}{\abs{\beta}}]}$, by \cite[Thm.\ 3, p.\ 79]{Fil:88} there exists $L\Subset M$ compact containing all their images.
  Thanks to the Avez-Seifert theorem \cite[Prop.\ 6.4]{Sae:16} any point $q\in \mathrm{int}(D^+_g(\Sigma))$ is reached by a maximiser $\gamma$, which by Remark \ref{rem:geomax}(ii) and Lemma \ref{Lem: max to sigma} is a Filippov geodesic emanating orthogonally from $\Sigma$. Moreover, by Theorem \ref{Th: Hawking gh case} its length is bounded by $\smash{\tau_\Sigma\leq\tfrac{n-1}{\abs{\beta}}}$,
  so we conclude that int$(D^+_g(\Sigma))\subseteq L$. 
  
  Due to the fact that $D^+_g(\Sigma)\setminus\Sigma$ is open by \cite[Thm.\ 2.34]{Min:19a},
  we get $D^+_g(\Sigma)\setminus\Sigma\subseteq\mathrm{int}(D^+_g(\Sigma))$, and consequently $\smash{\overline{D^+_g(\Sigma)}\subseteq L}$.
  Hence  (again using \cite[Thm.\ 2.35]{Min:19a}) $H^+_g(\Sigma)\subseteq L$ and so it is compact as well.

 \textbf{Step 2.} \textit{Any point in $H^+_g(\Sigma)$ can be reached by a maximiser.}
Retaining the notation from \eqref{eq:tauk} consider an approximating sequence of smooth metrics $g_k$ with $g_k\prec g$. Each $g_k$ is globally hyperbolic on $\mathrm{int}(D_{g_k}(\Sigma)) = D_{g_k}(\Sigma)$ (cf.\ (the proof of) \cite[Lem.\ 14.43]{ONe:83}).

(1) We claim that for all $k$
\begin{equation}\label{eq:Dplusorder}
H_g^+(\Sigma) \subseteq  \overline{D^+_g(\Sigma)} \subseteq
D_{g_{k+1}}^+(\Sigma) \subseteq D_{g_k}^+(\Sigma).
\end{equation} 
Indeed, by \cite[Thm.\ 2.36]{Min:19a} we have
\begin{align}
    \overline{D^+_g(\Sigma)}=\tilde{D}^+_g(\Sigma):=\{q\mid\mbox{ every past inextendible timelike curve from }q\mbox{ intersects }\Sigma\}.
\end{align}
Let $q\in\tilde{D}^+_g(\Sigma)$ and let $\gamma$ be $g_k$-past directed, past inextendible and causal. Then $\gamma$ is also $g$-past directed timelike and hence must intersect $\Sigma$, so $\smash{\overline{D^+_g(\Sigma)}\subseteq D^+_{g_k}(\Sigma)}$ and \eqref{eq:Dplusorder} follows. 

(2) Due to the classical Avez-Seifert theorem, given $q\in H^+_g(\Sigma)$, for each $k\in\N$ there exists a $g_k$-maximiser $\gamma_k$  
from $\Sigma$ to $q$ with
  \begin{align}
      L_{g_k}(\gamma_k)=\tau_{\Sigma,k}(q).
  \end{align}
  We assume $\gamma_k$ to be parametrised with respect to $h$-arclength.
  Because $\gamma_k$ is $g_k$-causal it is $g$-timelike and so must be contained in $\smash{\overline{D_g^+(\Sigma)}}$ since it terminates in $H_g^+(\Sigma)$.
Denoting by $p_k$ the initial point of $\gamma_k$, we may without loss of generality suppose that $p_k\to p\in \Sigma$. 
Let $\hat{g}_k:=\hat{g}_{\eps_k}$ be a sequence of smooth Lorentzian metrics as in \Cref{Le:approximating metrics} approximating $g$ from the outside. 

(3) Fixing $m\in \N$ we now want to apply the limit curve theorem \cite[Thm. 3.1(2)]{Min:08a} to the sequence $\{\gamma_k\}_k$ of $\hat{g}_m$-timelike curves in $(M,\hat{g}_m)$ to obtain a limit curve that connects $p$ and $q$. To do so 
we have to exclude case (2)(ii) in that theorem. Note that we cannot immediately conclude this from the curves being contained in a compact set since $(M,\hat{g}_m)$ need not be non-totally imprisoning. 

Assume, to the contrary, that the $h$-arclengths of the curves $\gamma_k$ are unbounded, i.e., $\gamma_k:[0,b_k] \to M$ with $\gamma_k(b_k)=q$ and $b_k\to \infty$. Then we obtain a future-inextendible $\hat{g}_m$-causal limit curve $\gamma:[0,\infty)\to M$ emanating from $p$. Note that this limit curve is in fact independent of $m$ and is $\hat{g}_m$-causal for all $m$, hence $g$-causal.

(4) We claim that for any $t$ with $\gamma(t)\in D_g^+(\Sigma)$, $\gamma$ is $g$-maximising from $\Sigma$ to $\gamma(t)$. By \Cref{Le:uniform convergence of tau} (which applies since we are in a globally hyperbolic region) and \Cref{Le:thinner lightcones}(i) we have
\begin{align*}
\tau_\Sigma(\gamma(t))=\lim_{k\to \infty}\tau_{\Sigma,k}(\gamma_k(t))=\lim_{k\to \infty} L_{g_k}(\gamma_k|_{[0,t]}) 
\leq \limsup_{k\to\infty} L_g(\gamma_k|_{[0,t]}). 
\end{align*}
Since the $\hat{g}_m$ approximate $g$ from the outside, for each $k$ and $m$ we 
have (using \cite[Lem.\ A.1]{McCS:22}) $L_{g}(\gamma_k|_{[0,t]})\le L_{\hat{g}_m}(\gamma_k|_{[0,t]})$. Together with \cite[Thm.\ 2.4(b)]{Min:08a} this implies
\[
\limsup_k L_g(\gamma_k|_{[0,t]}) \le \limsup_k L_{\hat{g}_m}(\gamma_k|_{[0,t]}) \le L_{\hat{g}_m}(\gamma|_{[0,t]}).
\]
This holds for each $m$, and since $L_{\hat{g}_m}(\gamma|_{[0,t]}) \to L_g(\gamma|_{[0,t]})$ as $m\to\infty$, 
altogether we arrive at
\[
\tau_\Sigma(\gamma(t)) \le L_g(\gamma|_{[0,t]}),
\]
proving the claim.

(5) Next we show that $T:=\sup\{t\in [0,\infty) \colon \gamma(t)\in D^+_g(\Sigma)\}<\infty$. Indeed, if this were not the case then $\gamma\subseteq D^+_g(\Sigma)$ and by the above $\gamma$ is always maximising the Lorentzian distance to $\Sigma$. Hence by \Cref{rem:geomax}(ii) and \Cref{Lem: max to sigma} its re-parametrisation to $g$-unit speed is a Filippov-geodesic starting orthogonally to $\Sigma$. Since $\gamma$ is future inextendible and we assume Filippov-geodesic completeness, also its $g$-unit speed parametrisation must be defined on $[0,\infty)$. But then for any $\smash{t>\frac{n-1}{|\beta|}}$ we have that $\smash{\tau_\Sigma(\gamma(t))=t>\frac{n-1}{|\beta|}}$ (where $\gamma$ is being parametrized by $g$-unit speed), contradicting  $\smash{\tau_\Sigma\le \frac{n-1}{|\beta|}}$ on $D^+_g(\Sigma)$.\medskip 

(6) Since $D^+_g(\Sigma)\setminus \Sigma$ is open, we have
$\smash{\gamma(T)\in \overline{D^+_g(\Sigma)}\setminus D^+_g(\Sigma)=H^+_g(\Sigma)}$ (using \cite[Thm.\ 2.35]{Min:19a}).  
We now claim that $\gamma(t)\in I_g^+(H_g^+(\Sigma))$ for all $t>T$. 

Fix a globally hyperbolic neighbourhood $U$ of $\gamma(T)$ and fix a $\delta>0$ such that $\gamma([T-\delta,T+\delta])\subseteq U$. By $h$-uniform convergence also $\gamma_k([T-\delta,T+\delta])\subseteq U$ for large enough $k$. Thus by the same reasoning as in (4)
we conclude that $\gamma|_{[T-\delta,T+\delta]}$ is maximising, hence it has a causal character, namely timelike because it is timelike initially.

Since $I^+_g(H_g^+(\Sigma))$ is open, we also have that $\gamma_k(T+1)\in I^+_g(H_g^+(\Sigma))$ for $k$ large, contradicting the fact that $\smash{\gamma_k\subseteq \overline{D^+_g(\Sigma)}}$ (cf.\ \cite[Thm.\ 2.32]{Min:19a}). 

Altogether, we have established that case (ii) of \cite[Thm. 3.1 (2)]{Min:08a} cannot occur, hence $\gamma:[0,b]\to M$ indeed reaches $q=\gamma(b)$. By the same arguments as above, $\gamma|_{[0,b)}\subseteq D_g^+(\Sigma)$ and $\tau_\Sigma(\gamma(t))=L_g(\gamma|_{[0,t]})$ for all $t<b$. Letting $t\nearrow b$ and using lower semicontinuity of $\tau_\Sigma$ we conclude that
$\gamma$ is maximising from $\Sigma$ to $q$, thereby concluding Step 2.

\textbf{Step 3.} \textit{The function $p\mapsto\tau_\Sigma(p)$ is strictly decreasing on past-pointing generators of $H^+_g(\Sigma)$.} 
We
first note that by \cite[Thm. 2.32]{Min:19a} every $q\in H^+_g(\Sigma)$ is the future endpoint of a null maximiser $\alpha$, so we fix such a curve and take $s<t$ in its domain of definition. By the above there is a past-pointing timelike geodesic $\sigma$ from  $\alpha(t)$ to $\Sigma$ such that $L_g(\sigma)=\tau_\Sigma(\alpha(t))$. But $\alpha$ is null and so the curve $\beta$ obtained by the concatenation of $\smash{\alpha\vert_{[s,t]}}$ and $\sigma$ is not maximising by \cite[Thm.\ 1.1]{GL:18}. Hence
\begin{align}
    \tau_\Sigma(\alpha(s)) > L_g(\beta) = L_g(\sigma)=\tau_\Sigma(\alpha(t)).
\end{align}

\textbf{Step 4.} \textit{Conclusion.} Since $H^+(\Sigma)$ is compact the function $p\mapsto\tau_\Sigma(p)$ attains a minimum at some $q\in H^+(\Sigma)$. But this contradicts strict monotonicity of $\tau_\Sigma$ along a generator starting in $p$, thereby concluding the proof.
\end{proof} 

\begin{rem} 
Arguing as in the proof of Theorem \ref{Thm: Hawking non globally hyperbolic} it follows that assumption (ii) in 
Theorem \ref{Th: Hawking gh case} can be weakened to $\Sigma$ merely being a smooth \emph{future} Cauchy hypersurface satisfying the mean curvature bound. Here, by $\Sigma$ being a future Cauchy surface we mean (cf.\ \cite[p.\ 432]{ONe:83}) that $H^+(\Sigma)=\emptyset$. Indeed, given this, we may apply Theorem \ref{Th: Hawking gh case} to the globally hyperbolic spacetime $\mathrm{int}(D_g(\Sigma))$
and note that since $I^+(\Sigma) \subseteq D_g^+(\Sigma)$, the right hand side of \eqref{eq:Hawking-bounds-cases}
is in fact a global bound on $\tau_\Sigma$. 
\end{rem}

\section*{Data availability statement}
There is no data associated with this manuscript.

\section*{Conflict of interest statement} The authors have no conflicts of interest to declare.

\section*{Acknowledgments}
We would like to thank Mathias Braun, Nicola Gigli, Christian Ketterer, Christian Lange and Clemens Sämann for helpful discussions. This research was funded in part by the Austrian Science Fund (FWF) [Grants DOI \href{https://doi.org/10.55776/PAT1996423}{10.55776/PAT1996423}, \href{https://doi.org/10.55776/P33594}{10.55776/P33594}, and \href{https://doi.org/10.55776/EFP6}{10.55776/EFP6}]. MG and EH acknowledge the support of
the German Research Foundation through the excellence
cluster EXC 2121 ”Quantum Universe” – 390833306.  For open access purposes, the authors have applied a CC BY public copyright license to any author accepted manuscript version arising from this submission.

\addcontentsline{toc}{section}{References}


\begin{thebibliography}{10}

\bibitem{A:75}
R.~A. Adams and J.~J.~F. Fournier.
\newblock {\em Sobolev spaces}, volume 140 of {\em Pure and Applied Mathematics
  (Amsterdam)}.
\newblock Elsevier/Academic Press, Amsterdam, Second Edition, 2003.

\bibitem{AB:06}
C.~D. Aliprantis and K.~C. Border.
\newblock {\em Infinite dimensional analysis}.
\newblock Springer, Berlin, {T}hird edition, 2006.
\newblock A hitchhiker's guide.

\bibitem{AG:02}
L. Andersson and G. Galloway.
\newblock d{S}/{CFT} and spacetime topology.
\newblock {\em Adv.\ Theor.\ Math.\ Phys.} 6(2): 307--327, 2002.

\bibitem{Bor:94}
A. Borde.
\newblock Open and closed universes, initial singularities, and inflation.
\newblock {\em Phys.\ Rev.\ D} 50(6): 3692--3702, 1994.

\bibitem{Bra:24}
M.~Braun.
\newblock Exact d'{A}lembertian for {L}orentz distance functions, 
\newblock{Preprint, arXiv:2408.16525}, 2025. 

\bibitem{BC:22}
M.~Braun and M.~Calisti.
\newblock Timelike {R}icci bounds for low regularity spacetimes by optimal
  transport.
\newblock {\em Communications in Contemporary Mathematics}  
26 (2024), no.\ 9, Paper No.\ 2350049, 23 pp.

\bibitem{BMcC:23}
M.~Braun and R.~J. McCann.
\newblock Causal convergence conditions through variable timelike {R}icci
  curvature bounds. \newblock{Preprint, arXiv:2312.17158}, 2023.

\bibitem{MS:02}
M.~Braverman, O.~Milatovic, and M.~Shubin.
\newblock Essential self-adjointness of {S}chrödinger-type operators on
  manifolds.
\newblock {\em Russian Mathematical Surveys}, 57(4):641--692, 2002.

\bibitem{CFMcC:02}
L.~A. Caffarelli, M.~Feldman, and R.~J. McCann.
\newblock Constructing optimal maps for {M}onge's transport problem as a limit
  of strictly convex costs.
\newblock {\em J.\ Amer.\ Math.\ Soc.}, 15(1):1--26, 2002.

\bibitem{CM:21}
F.~Cavalletti and E.~Milman.
\newblock The globalization theorem for the curvature-dimension condition.
\newblock {\em Invent. Math.}, 226(1):1--137, 2021.

\bibitem{CM:17II}
F.~Cavalletti and A.~Mondino.
\newblock Sharp and rigid isoperimetric inequalities in metric-measure spaces
  with lower {R}icci curvature bounds.
\newblock {\em Invent. Math.}, 208(3):803--849, 2017.

\bibitem{CM:20}
F.~Cavalletti and A.~Mondino.
\newblock Optimal transport in {L}orentzian synthetic spaces, synthetic
  timelike {R}icci curvature lower bounds and applications.
\newblock {\em Camb.\ J.\ Math.}, 12(2):417--534, 2024.

\bibitem{CC:96}
J.~Cheeger and T.~H. Colding.
\newblock Lower bounds on {R}icci curvature and the almost rigidity of warped
  products.
\newblock {\em Ann. of Math. (2)}, 144(1):189--237, 1996.

\bibitem{CG:12}
P.~T. Chru{\'s}ciel and J.~D.~E. Grant.
\newblock On {L}orentzian causality with continuous metrics.
\newblock {\em Classical Quantum Gravity}, 29(14):145001, 32, 2012.

\bibitem{Cla:98}
C.~J.~S. Clarke.
\newblock Generalized hyperbolicity in singular spacetimes.
\newblock {\em Classical Quantum Gravity}, 15(4):975--984, 1998.

\bibitem{FS:12}
A.~Fathi and A.~Siconolfi.
\newblock On smooth time functions.
\newblock {\em Math. Proc. Cambridge Philos. Soc.}, 152(2):303--339, 2012.

\bibitem{Fil:88}
A.~F. Filippov.
\newblock {\em Differential equations with discontinuous righthand sides},
  volume~18 of {\em Mathematics and its Applications (Soviet Series)}.
\newblock Kluwer Academic Publishers Group, Dordrecht, 1988.
\newblock Translated from the Russian.

\bibitem{GLS:18}
G.~J. Galloway, E.~Ling, and J.~Sbierski.
\newblock Timelike completeness as an obstruction to {$C^0$}-extensions.
\newblock {\em Comm. Math. Phys.}, 359(3):937--949, 2018.

\bibitem{Gra:16}
M. Graf. 
\newblock Volume comparison for {${C}^{1,1}$}-metrics.
\newblock  {\em Ann.\ Global Anal.\ Geom.}, 50(3): 209--235, 2016.

\bibitem{Gra:20}
M.~Graf.
\newblock Singularity theorems for {$C^1$}-{L}orentzian metrics.
\newblock {\em Comm. Math. Phys.}, 378(2):1417--1450, 2020.

\bibitem{GGKS:18}
M.~Graf, J.~D.~E. Grant, M.~Kunzinger, and R.~Steinbauer.
\newblock The {H}awking--{P}enrose {S}ingularity {T}heorem for
  {$C^{1,1}$}-{L}orentzian {M}etrics.
\newblock {\em Comm. Math. Phys.}, 360(3):1009--1042, 2018.

\bibitem{GKOS:22}
M.~Graf, E.-A. Kontou, A.~Ohanyan, and B.~Schinnerl.
\newblock Hawking-type singularity theorems for worldvolume energy
  inequalities,
\newblock {\em Ann. Henri Poincaré} (2024).

\bibitem{GL:18}
M.~Graf and E.~Ling.
\newblock Maximizers in {L}ipschitz spacetimes are either timelike or null.
\newblock {\em Classical Quantum Gravity}, 35(8):087001, 6, 2018.

\bibitem{GKSS:20}
J.~D.~E. Grant, M.~Kunzinger, C.~S\"{a}mann, and R.~Steinbauer.
\newblock The future is not always open.
\newblock {\em Lett. Math. Phys.}, 110(1):83--103, 2020.

\bibitem{GP:09}
J.~B. Griffiths and J.~Podolsk\'y.
\newblock {\em Exact space-times in {E}instein's general relativity}.
\newblock Cambridge Monographs on Mathematical Physics. Cambridge University
  Press, Cambridge, 2009.

\bibitem{HW:51}
P.~Hartman and A.~Wintner.
\newblock On the problems of geodesics in the small.
\newblock {\em Amer. J. Math.}, 73:132--148, 1951.

\bibitem{Haw:67}
S.~W. Hawking.
\newblock The occurrence of singularities in cosmology. {III}. {C}ausality and
  singularities.
\newblock {\em Proceedings of the Royal Society of London. Series A,
  Mathematical and Physical Sciences}, 300(1461):187--201, 1967.

\bibitem{HE:73}
S.~W. Hawking and G.~F.~R. Ellis.
\newblock {\em The large scale structure of space-time}.
\newblock Cambridge University Press, London-New York, 1973.

\bibitem{Isr:66}
W.~Israel.
\newblock Singular hypersurfaces and thin shells in general relativity.
\newblock {\em Il Nuovo Cimento B (1965-1970)}, 44(1):1--14, 1966.


\bibitem{Ket:20}
C.~Ketterer.
\newblock The {H}eintze-{K}archer inequality for metric measure spaces.
\newblock {\em Proc. Amer. Math. Soc.}, 148(9):4041--4056, 2020.

\bibitem{Ket:23}
C.~Ketterer.
\newblock Rigidity of mean convex subsets in non-negatively curved {RCD} spaces
  and stability of mean curvature bounds, J.\ Topol.\ Anal.\ 17 (2025), no.\ 2, 259--294.

\bibitem{KS:23z}
S.~Khakshournia and R.~Mansouri.
\newblock {\em The art of gluing space-time manifolds---methods and
  applications}.
\newblock SpringerBriefs in Physics. Springer, 2023.

\bibitem{K:17}
B.~Klartag.
\newblock Needle decompositions in {R}iemannian geometry.
\newblock {\em Mem. Amer. Math. Soc.}, 249(1180):v+77, 2017.

\bibitem{KOV:22}
M.~Kunzinger, M.~Oberguggenberger, and J.~A. Vickers.
\newblock Synthetic versus distributional lower {R}icci curvature bounds.
\newblock {\em Proc.\ Roy.\ Soc.\ Edinburgh}, pages 1--25, 2023.

\bibitem{KOSS:22}
M.~Kunzinger, A.~Ohanyan, B.~Schinnerl, and R.~Steinbauer.
\newblock The {H}awking-{P}enrose singularity theorem for {$C^1$}-{L}orentzian
  metrics.
\newblock {\em Comm. Math. Phys.}, 391(3):1143--1179, 2022.

\bibitem{KS:18}
M.~Kunzinger and C.~S\"amann.
\newblock Lorentzian length spaces.
\newblock {\em Ann. Global Anal. Geom.}, 54(3):399--447, 2018.

\bibitem{KSSV:14}
M.~Kunzinger, R.~Steinbauer, M.~Stojkovi{\'c}, and J.~A. Vickers.
\newblock A regularisation approach to causality theory for
  {$C^{1,1}$}-{L}orentzian metrics.
\newblock {\em Gen. Relativity Gravitation}, 46(8):Art. 1738, 18, 2014.

\bibitem{KSSV:15}
M.~Kunzinger, R.~Steinbauer, M.~Stojkovi{\'c}, and J.~A. Vickers.
\newblock Hawking's singularity theorem for {$C^{1,1}$}-metrics.
\newblock {\em Classical Quantum Gravity}, 32(7):075012, 19, 2015.

\bibitem{KSV:15}
M.~Kunzinger, R.~Steinbauer, and J.~A. Vickers.
\newblock The {P}enrose singularity theorem in regularity {$C^{1,1}$}.
\newblock {\em Classical Quantum Gravity}, 32(15):155010, 12, 2015.

\bibitem{LLS:21}
C.~Lange, A.~Lytchak, and C.~S\"{a}mann.
\newblock Lorentz meets {L}ipschitz.
\newblock {\em Adv. Theor. Math. Phys.}, 25(8):2141--2170, 2021.

\bibitem{Lee:00}
J.~M. Lee.
\newblock {\em Introduction to smooth manifolds}, volume 218 of {\em Graduate
  Texts in Mathematics}.
\newblock Springer, New York, second edition, 2013.

\bibitem{MM:24}
M.~Manzano and M.~Mars.
\newblock Abstract formulation of the spacetime matching problem and null thin
  shells.
\newblock {\em Phys. Rev. D}, 109(4):Paper No. 044050, 28, 2024.

\bibitem{McCS:22}
R.~J. McCann and C.~S\"amann.
\newblock A {L}orentzian analog for {H}ausdorff dimension and measure.
\newblock {\em Pure and Applied Analysis}, 4(2):367--400, 2022.

\bibitem{Min:08a}
E.~Minguzzi.
\newblock Limit curve theorems in {L}orentzian geometry.
\newblock {\em J. Math. Phys.}, 49(9):092501, 18, 2008.

\bibitem{Min:15}
E.~Minguzzi.
\newblock Convex neighborhoods for {L}ipschitz connections and sprays.
\newblock {\em Monatsh. Math.}, 177(4):569--625, 2015.

\bibitem{Min:19a}
E.~Minguzzi.
\newblock Causality theory for closed cone structures with applications.
\newblock {\em Rev. Math. Phys.}, 31(5):1930001, 139, 2019.

\bibitem{Min:19b}
E.~Minguzzi.
\newblock Lorentzian causality theory.
\newblock {\em Living Reviews in Relativity}, 22(1):3, 2019.

\bibitem{MR:24}
A.~Mondino and V.~Ryborz.
\newblock On the equivalence of distributional and synthetic Ricci curvature
  lower bounds, J.\ Funct.\ Anal.\ 289 (2025), no.\ 8, Paper No.\ 111035.

\bibitem{ONe:83}
B.~O'Neill.
\newblock {\em Semi-{R}iemannian geometry with applications to relativity},
  volume 103 of {\em Pure and Applied Mathematics}.
\newblock Academic Press, New
  York, 1983.

\bibitem{Pen:65}
R.~Penrose.
\newblock Gravitational collapse and space-time singularities.
\newblock {\em Phys. Rev. Lett.}, 14:57--59, 1965.

\bibitem{Pen:72a}
R.~Penrose.
\newblock The geometry of impulsive gravitational waves.
\newblock In {\em General relativity (papers in honour of {J}. {L}. {S}ynge)},
  pages 101--115. Clarendon Press, Oxford, 1972.


\bibitem{PS:22}
J.~Podolsk\'y and R.~Steinbauer.
\newblock Penrose junction conditions with {$\Lambda$}: geometric insights into
  low-regularity metrics for impulsive gravitational waves.
\newblock {\em Gen. Relativity Gravitation}, 54(9):Paper No. 96, 24, 2022.

\bibitem{Sae:16}
C.~S{\"a}mann.
\newblock Global hyperbolicity for spacetimes with continuous metrics.
\newblock {\em Ann. Henri Poincar\'e}, 17(6):1429--1455, 2016.

\bibitem{SS:18}
C.~S\"{a}mann and R.~Steinbauer.
\newblock On geodesics in low regularity.
\newblock {\em Journal of Physics: Conference Series}, 968(1):012010, 2018.

\bibitem{SS:21}
B.~Schinnerl and R.~Steinbauer.
\newblock A note on the {G}annon-{L}ee theorem.
\newblock {\em Lett. Math. Phys.}, 111(6):Paper No. 142, 17, 2021.

\bibitem{Sen:98}
J.~M.~M. Senovilla.
\newblock Singularity theorems and their consequences.
\newblock {\em Gen. Relativity Gravitation}, 30(5):701--848, 1998.

\bibitem{Ste:14}
R.~Steinbauer.
\newblock Every {L}ipschitz metric has $\mathcal{C}^{1}$-geodesics.
\newblock {\em Classical and Quantum Gravity}, 31(5):057001, 2014.

\end{thebibliography}
\end{document}